\newtheorem{theorem}{Theorem}
\newtheorem{lemma}{Lemma}
\newtheorem{remark}[theorem]{Remark}
\newcommand{\dist}{\mathop{\mathrm{dist}}\nolimits}
\newcommand{\boundellipse}[3]
{(#1) ellipse (#2 and #3)
}
\newcommand\restr[2]{{
  \left.\kern-\nulldelimiterspace 
  #1 
  \vphantom{\big|} 
  \right|_{#2} 
  }}
\author{Margaret Brown}
\address{Department of Mathematics, Penn State University,
State College, PA 16802, USA and 
Department of Mathematics, University of Maryland, College Park, MD 20741, USA}
\email{mlb6635@psu.edu}
\author{P\'eter N\'andori}
\address{Department of Mathematical Sciences, Yeshiva University, New York, NY, 10016, USA}
\email{peter.nandori@yu.edu }
\title
{Statistical properties of type D dispersing billiards}
\date{}
\begin{document}

\maketitle

\begin{abstract}
We consider dispersing billiard tables whose boundary is piecewise smooth and the free flight
function is unbounded. We also assume there are no cusps. Such billiard tables are called type D 
in the monograph of Chernov and Markarian \cite{CM06}.
For a class of non-degenerate type D dispersing billiards, 
we prove exponential decay of correlation and several other statistical properties.
\end{abstract}

\section{Introduction}


Consider a collection of disjoint open sets on the  torus
$\mathbb T^2 = \mathbb R^2 /\mathbb Z^2$
(called scatterers in the sequel) with piecewise $C^3$ boundary 
which are locally convex with bounded from below curvature at regular points. We assume that there are no cusps.
To define the Sinai billiard flow \cite{Si70},
let a point particle fly freely with constant speed on the complement of the scatterers (called the billiard table) and be
subject to elastic collision upon reaching their boundaries. 
Depending on the geometry of the scatterers, the free flight time may or may not be bounded. 
A partial classification of dispersing billiard tables is given by \cite{CM06} as follows:
assume first that the boundary of the billiard table is $\mathcal C^3$. If the free flight is bounded, then the table is of type A, otherwise of type B. 
Now assume that the boundary of the billiard table is only piecewise $\mathcal C^3$.
Points where the boundary is $\mathcal C^3$ are called regular. The finitely many 
non-regular points are called corner points.
If the free flight is bounded, then the table is of type C, otherwise of type D. (In case of cusps, type E and F.)
Statistical properties were first proven for type A billiards 
(see the central limit theorem in \cite{BS81, 
BSCh90, BSCh91}, exponential decay of correlations \cite{Y98}). 
Next, type B tables were also extensively studied (see \cite{B92, SzV07, C06}).
Although there are early works for types C and D \cite{BSCh91}, the more recent theory 
(such as the construction of Young towers \cite{Y98}) was not studied in these classes until recently. 
For type C billiard tables, \cite{DST13} proves the $m$-step expansion estimate, which together with other estimates 
(that can be proved
as in type A) yield the statistical properties mentioned above.
There are fewer results available in types D-F (in fact, these classes are 
labelled as "hard" in \cite{CM06})

It is standard that long free flights are only possible after a collision in
a small vicinity of finitely many points, which we call boundary points of corridors. 
We now distinguish two classes of type D billiard tables:
if all boundary points of corridors are regular, we say that the billiard table is of type 
D1, otherwise of type D2.
The main result of the present work can be informally stated as follows.
\begin{theorem}
\label{thm0}
Consider a billiard table of type D1 or
type D2, in which case we also require that assumptions
(A1) and (A2) hold. Then the correlation of bounded dynamically H\"older observables decay exponentially fast and the central limit theorem
holds for such observables.
\end{theorem}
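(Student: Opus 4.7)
The plan is to prove Theorem~\ref{thm0} by constructing a Young tower \cite{Y98} with exponential return time tails for the discrete-time collision map, from which exponential decay of correlations and the CLT both follow by standard machinery. The strategy is to verify, under the stated hypotheses, the geometric and hyperbolic conditions that have been previously established for the type A, B, and C dispersing billiard classes, while handling the new difficulties specific to type D: the simultaneous presence of corner points (as in type C) and unbounded free flight (as in type B).

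First I would set up the standard hyperbolic structure: invariant stable/unstable cone fields, local stable/unstable manifolds, distortion bounds, and absolute continuity of the holonomy map. The homogeneity partition of phase space must be adapted to account for both sources of unboundedness of the derivative of the collision map: the singularity near tangent collisions (familiar from all dispersing billiards) and the unbounded free flight time near boundary points of corridors. Next comes the crucial step, an expansion estimate for unstable curves. For type A the classical one-step expansion estimate suffices; for type C one uses the $m$-step expansion estimate of \cite{DST13}; for type B a separate argument is required due to the subtler complexity of singularities along corridors. For type D1, since all boundary points of corridors are regular, one should be able to combine the type B treatment near corridor singularities with the type C treatment near corner points, essentially in a superposition. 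For type D2, where corner points may coincide with boundary points of corridors, this is precisely where assumptions (A1) and (A2) should play the essential role, providing a geometric non-degeneracy that quantitatively controls the complexity of iterated singularity sets.

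The main obstacle I anticipate is the type D2 case: pre-images of corners and corridor singularity lines may accumulate in intricate ways along a single unstable curve, and without care the $m$-step expansion estimate may fail in a neighborhood where corner and corridor singularities interact. The role of (A1)--(A2) is presumably to ensure that only a quantitatively bounded number of combined corridor/corner singularity lines can intersect any sufficiently short unstable curve, so that expansion over $m$ iterates dominates the combinatorial growth of complexity. Once an appropriate $m$-step estimate is in hand for both D1 and D2, the remaining ingredients---construction of a hyperbolic horseshoe with Markov return structure, the coupling lemma, and exponential tails of the return time---proceed along lines parallel to those in \cite{C06, DST13}. The CLT and exponential decay of correlations for bounded dynamically H\"older observables then follow from the standard theory of Young towers, with only cosmetic modifications to handle that the roof function (free flight time) is unbounded.
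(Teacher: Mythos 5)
Your outline is correct in spirit for the type D1 case: the paper also proves an $m_0$-step expansion estimate (Theorem~\ref{thm1}) by merging the type~C strategy from \cite{DST13} with the type~B treatment of corridor singularities from \cite{CM06,SzV07}, and then invokes the standard machinery of \cite{C99} to get a Young tower and the statistical consequences. The hyperbolic, distortion, and absolute-continuity ingredients you list are indeed treated as routine.

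However, there is a genuine gap in your plan for type D2. You propose to prove an $m$-step expansion estimate in this case as well, with (A1)--(A2) serving to bound the number of singularity pieces. This would fail. The obstruction is not combinatorial complexity but an outright divergence of the expansion sum: for a corridor bounded by one regular point and one corner point (a ``type~2'' corridor in the paper's terminology), an unstable curve $W$ approaching the regular boundary point is cut into \emph{infinitely many} H-components $W_n$ by the corridor singularities, exactly as in the type~B setting, but---and here is the crucial difference---after the long flight the particle lands near the opposite \emph{corner} point rather than near a grazing collision. By (A2) the incidence angle there is bounded away from $\pm\pi/2$, so the post-flight collision is non-tangential and provides no compensating expansion. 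Consequently $\Lambda_{n,1}\approx n$ with $|V_n|/|W|\approx n^{-1}$, and $\sum_n 1/\Lambda_{n,1}$ diverges. No choice of $m$ repairs this: the one-step condition (H5) of \cite{CZ09} simply fails, and so does any $m$-step version of the classical expansion estimate. This is stated explicitly in Remark~\ref{rem:expansion} of the paper.

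The paper's actual route for D2 is therefore different in kind, not just in detail. Instead of a reciprocal-expansion sum, it controls a weighted fragmentation functional $\mathcal Z_q$ with exponent $q<1$ (so that $\sum_n n^{-2+q}<\infty$ even when $\sum_n n^{-1}$ diverges), proving in Lemma~\ref{lem:expansion} that $\mathcal Z_q$ grows at most by a bounded factor across a long flight in a type~2 corridor. Because that factor need not be less than $1$, a second ingredient is required: a non-periodicity lemma (Lemma~\ref{lem:nonper} / Lemma~\ref{lem:nonper2}), showing that the forward orbits of the corner boundary points of corridors cannot re-enter a deep neighborhood of those boundary points within $K$ iterations. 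This is what lets bounded growth over at most three steps be swallowed by the contraction accumulated elsewhere, yielding Theorem~\ref{thm2} (a contraction of $\mathcal Z_q$ under $F^M$). That statement then substitutes for (H5) in the abstract scheme of \cite{CZ09}, giving the growth lemma and hence the Young tower. To fix your plan you would need to (i) abandon the $m$-step expansion estimate in the D2 case and replace it with a $\mathcal Z_q$-type bound for $q<1$, and (ii) add the dynamical non-recurrence argument along the critical corner orbits, which is where the real use of (A1)--(A2) is made.
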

The precise definitions 
are given in Section \ref{sec:pre}.
We note that some conditional results are available in the literaure, 
see e.g. the condition on complexity in \cite{BSCh90}.
Up to our best knowledge that condition on complexity 
is not known to hold generically and in fact is not verified
for any specific billiard table. 
Our conditions (A1), (A2)
hold on an open and dense set of billiard tables and furthermore given any billiard table
it is easy to check whether they hold as they only depend on the boundary
points of the corridors.

The rest of this paper is organized as follows. In Section \ref{sec:pre} we collect the necessary background information needed in this work.
None of the results of Section \ref{sec:pre} are new.
In Section \ref{sec:res} we state our main technical theorems. Theorem \ref{thm1} implies Theorem \ref{thm0} in type D1. 
Theorem \ref{thm2} implies Theorem \ref{thm0} in type D2.
Section
\ref{sec:thm1} contains the proof of Theorem \ref{thm1}. 
Section
\ref{sec:thm2} contains the proof of Theorem \ref{thm2}. This proof is substantially more complex then that of  Theorem \ref{thm1}
as we need a careful study of the geometry of long free flights in case 
of type D2 configurations. The proof of an important lemma is postponed to 
Section \ref{sec:lemma9}. In Section 
\ref{sec:thm3}, we prove that the conditions (A1), (A2) hold on an open and
dense set of billiard tables.

We mention that very recently there has been an increasing interest in the detailed 
description of possible orbits in infinite corridors in cases of hyperbolic billiards
\cite{ABB19, PT20} and in some similar hyperbolic systems with singularities \cite{DDS20}.


\section{Preliminaries}
\label{sec:pre}

Here we review the preliminaries needed for our work. All results in this section are known, see
\cite{C99,ChD09,CM06}. More specific references will be given for the most important statements.

\subsection{Billiards of type D}

Let $\mathbb T^2$ be the $2$-torus and $\mathcal D \subset \mathbb T^2$ be a
dispersing billiard table. 
That is,
the complement of $\mathcal D$ consists of finitely many (say $d$) connected components $\mathcal B_i$ (called scatterers). 
For convenience we also label the scatterers. Each scatterer 
$i = 1,...,d$ is bounded by a finite union of curves $\Gamma_{i,j}$, $j=1,...,J_i$. It is assumed that $\Gamma_{i,j}$
is a $\mathcal C^3$ curve, that is there is a $\mathcal C^3$ function $f_{i,j}: [0,1] \rightarrow \mathbb T^2$ which is a 
bijection between $[0,1]$ and $\Gamma_{i,j}$. 
Furthermore, $f_{i,j}(1) = f_{i,j+1}(0)$ where $j+1$ is interpreted 
modulo $J_i$ (that is, $f_{i,J_i}(1) = f_{i,1}(0)$). 
The endpoints of $\Gamma_{i,j}$ are called corner points, all other
points of $\Gamma_{i,j}$ are regular points. 
We require that one of the first three one-sided derivatives at $f_{i,J_i}(1)$ differ from the corresponding derivative at
$ f_{i,1}(0)$, that is no regular point is labelled as corner point.
We also require that the curvature of $\Gamma_{i,j}$ is positive 
with uniform upper and lower bounds 
at all regular points. 
The orientation of $\Gamma_{i,j}$ is assumed to be so that when following $\Gamma_{i,1}$, 
$\Gamma_{i,2}$, ..., $\Gamma_{i,J_i}$, we follow clockwise orientation and $\mathcal D$ is to the left hand
side. The region enclosed by $\Gamma_{i,1}, ..., \Gamma_{i, J_i}$ (a subset of 
$\mathbb T^2 \setminus \mathcal D$) is one scatterer. 
If the boundary of the scatterer $i$ is 
$\mathcal C^3$ smooth, i.e. does not contain corner points, then the scatterer is necessarily strictly 
convex
and $J_i= 1$.
We also assume no cusps, that is the tangent lines of
$\Gamma_{i,j}$ and $\Gamma_{i,j+1}$ have an angle of at least $\alpha_0$ at their common endpoint $f_{i,j}(1)$.
The interiors of $\Gamma_{i,j}$
are disjoint for all $i,j$. Furthermore, at each corner point exactly two curves meet 
and their angle is bounded from below 
by some positive constant.
Any billiard table satisfying these assumptions is called admissible.

Given a corner point, let $\gamma$ be the angle between the two half tangent lines at it, 
measured at the interior of $\mathcal D$. 
The admissible property implies that $\gamma \neq 0$ for all corner points 
(the case $\gamma = 0$ is called a cusp). 
We say that the corner point is {\it convex} if $ 0 < \gamma \leq \pi$. Note that $\gamma = \pi$
is possible, in this case we assume that either the second or the third derivatives on the 
two sides of the corner points differ. We say that the corner point is {\it concave} if 
$ \pi < \gamma < 2\pi$ (noting that $\gamma = 2 \pi$ is impossible due to local convexity of the scatterers
at regular points). See Figure \ref{fig0}.

\begin{figure}
\begin{center}

\begin{tikzpicture}[scale=0.5]

\fill[fill=lightgray] plot[domain=0:2*pi] (xy polar cs:angle=\x r,radius= {2-2*sin(\x r)});
\draw[thick,domain=0:2*pi,samples=200,smooth] plot (xy polar cs:angle=\x r,radius= {2-2*sin(\x r)});

\end{tikzpicture}
\hspace{3 cm}
\begin{tikzpicture}[scale=4.5, rotate = 10.5]

\draw[thick, fill = gray!50] (1,0) arc (30:60:1) arc (100:130:1);

\end{tikzpicture}
\caption{A scatterer with a convex corner point (left) and a concave corner point (right)} \label{fig0}
\end{center}
\end{figure}
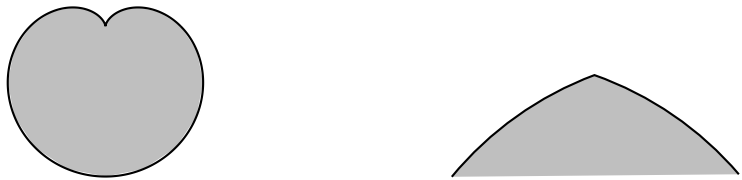


Given two admissible 
billiard tables $\mathcal D_1, \mathcal D_2$
with the same combinatorial data (that is the same number of scatterers $d$ and the same number
of smooth pieces $J_i$, $i=1,..,d$), we define their distance as 
$$
d(\mathcal D_1, \mathcal D_2)  = \inf_{\{ f^1\} ,\{ f^2\} } \max_{i,j} d_{\mathcal C^3} (f^1_{i,j}, f^2_{i,j}),
$$
where the infimum is taken over admissible parametrizations, i.e. collections of $\mathcal C^3$ 
functions $f^k_{i,j}$ so that $f^k_{i,j}$ is a bijection between $[0,1]$ and $\Gamma^k_{i,j}$ where $k=1,2$ indicates
the two billiard tables.
This makes the set of labelled admissible billiard tables
with given combinatorial data a metric space $\bm D_{d, J_1,...,J_d}$.
Let $\bm D$ denote the space of all (labelled) admissible billiard tables, that is 
$\bm D =  \cup_{d, J_1,...,J_d} \bm D_{d, J_1,...,J_d}$. The space $\bm D$ is also a metric space
by defining the distance between two tables of different 
combinatorial data to be infinite (mind the labelling).

The billiard dynamics on a fixed admissible billiard table $\mathcal D$ prescribes the motion of 
a point particle that flies with constant speed $1$ in a given direction
 $v$ until it reaches the boundary $\partial \mathcal D$,
where it undergoes an elastic collision (meaning the angle of reflection equals the angle of incidence). 
The phase space of the billiard flow is
$\Omega = \mathcal D \times \mathcal S^1 / \sim $, where $\sim$ means identifying pre-collisional and post-collisional
data (that is, if $q \in \partial \mathcal D$ is a regular point, then $v$ and $-v$ are identified unless $v$ is tangent to $\partial \mathcal D$ at $q$. We will discuss
the case of corner points in more detail later). 
We use the notation $(q,v) \in \Omega$ with $v$ being the velocity vector.
We 
say that $\mathcal D$ is the configuration space and $q = \Pi_{\mathcal D} (q,v)$ 
is the configurational
component of $(q,v)$. 
The billiard flow is denoted by $\Phi^t : \Omega \mapsto \Omega$ for every $t \in \mathbb R$. 

Note that the dynamics may not be well defined upon reaching a corner point. Such trajectories have
Lebesgue measure zero, so the definition of $\Phi$ on this set is irrelevant for physical properties. 
It is convenient, though, to
define the flow to be possibly multi-valued upon reaching a corner point, corresponding to possible
limit points of nearby regular orbits. One way of defining the flow is as follows. First, we say that a collision
is improper if the trajectory can be approximated by trajectories missing the collision. In the case of smooth 
scatterers, an improper collision is the same as a grazing collision. In the case of a concave corner point, we
may have an improper collision which is not grazing (such as a horizontal flight touching the 
corner point on the right of Figure \ref{fig0}). A proper collision is a collision that is not improper.
For example, a vertical flight hitting the corner on the right of Figure \ref{fig0} is proper,
and nearby regular trajectories have two possible continuations. All trajectories hitting a convex corner point
are proper. Furthermore, we may have a sequence of short flights near the convex
corner point
(also known as corner sequence), but the number of short collisions (the length of the corner
sequence) is bounded due to the assumption that there are no cusps (see \cite[Section 9]{C99}).
Now given a point $(q,v)$, put 
$\tau(q,v) = \inf \{ t>0: \Pi_{\mathcal D} \Phi^t(q,v) \in \partial \mathcal D\}$. Now assuming that 
$\lim_{t \nearrow \tau(q,v)} \Pi_{\mathcal D} \Phi^t(q,v)$ is a corner point $\tilde q \in \partial \mathcal D$, 
we define
$\Phi^{\tau(q,v)}(q,v)$ as 
$$\lim_{\varepsilon \searrow 0} \lim_{q' \rightarrow q, v' \rightarrow v} \Phi^{\tau(q,v) + \varepsilon}(q',v')$$
where 
$q',v'$ are points that can only experience collisions at regular points up to time $\tau(q,v) + \varepsilon$
and
the second limit is to be interpreted as the set of all possible limit points.
With this definition, $\Phi^{\tau(q,v)}(q,v)$ can take one or two values. 
This is trivial in case of concave corner points; for convex points see \cite[Section 2.8]{CM06}.
The flow $\Phi^t$ preserves the Lebesgue measure
$\nu$ on $\Omega$ (we assume by normalization that $\nu$ is a 
probability measure). 

We will also study the billiard map. Let
$\mathcal M$ be a cross-section of post-collisional points. Then $\mathcal M$ can be identified with 
a union of cylinders and rectangles. 
For any curve $\Gamma_{i,j}$, we define $\mathcal M_{i,j} = [a_{i,j},b_{i,j}] \times [-\pi/2, \pi/2]$ where 
$b_{i,j} - a_{i,j} = |\Gamma_{i,j}|$ and the intervals $[a_{i,j},b_{i,j}]$ are disjoint. If the scatterer $i$ is smooth
(in this case necessarily $J_i = 1$), then
we identify the endpoints of the interval $[a_{i,1},b_{i,1}] $, so $\mathcal M_{i,1}$
becomes a cylinder.
Finally, we put 
$\mathcal M = \cup_{i,j} \mathcal M_{i,j}$. Coordinates 
in $\mathcal M$ are denoted by $(r, \varphi)$:
$r$ is arclength parameter along the boundary of the scatterer in clockwise direction;
$\varphi$ is 
the angle of the postcollisional velocity and the normal to $\mathcal D$ at $q$ pointing
into $\mathcal D$. The angle 
$\varphi$ is 
also measured in the clockwise direction
with $\varphi \in [-\pi /2 , \pi /2 ]$ (see \cite[Figure 2.14]{CM06}).
The billiard map is denoted by $F: \mathcal M \rightarrow \mathcal M$. 
It preserves the physical invariant measure $\mu$ defined by $d \mu = C_{\mu} \cos \varphi dr d\varphi$, where 
$C_{\mu}$ is a normalizing constant
($\mu$ is obtained as the projection of $\nu$ to the Poincar\'e section). 
The flow is now a suspension over the base map $F$ with roof function
$\tau$, which is the free flight time.
Note
that $F$ can be multivalued at points when the next collision is at a corner point. The special case of unbounded free flight near a corner 
point will be discussed in the next section.

For ease of notation, we will identify $\mathcal M$ with a subset of $\Omega$ in the natural way. 
For example, we will write $\Pi_{\mathcal D} x$
for $x \in \mathcal M$. 

\subsection{Structure of corridors}

Next we study corridors. We say that an admissible billiard table has {\it infinite horizon} if 
the free flight is unbounded. In this case, there are finitely many "corridors". A corridor $H$ by definition
is a direction $v= v_H \in [0, \pi)$ 
and a 
connected subset $Q_H$ of $\mathcal D$ with non-empty interior
\begin{equation}
\label{eq:QH}
Q_H=\{ q \in \mathcal D: \forall t \in \mathbb R: q+tv \in \mathcal D \}.
\end{equation}
There are only finitely many corridors 
(see \cite[Exercise 4.51]{CM06}). Let us say that an admissible billiard table is of type D1 if all corridors are bounded by 
grazing orbits at regular points (such orbits are necessarily periodic). In other words, a billiard table is regular
if no corner point is in the corridors. 
If the billiard table is of type D but not of type D1, then 
we call it type D2.
We say that an admissible billiard table is simple if for all corridors $H$, 
$B_H := \partial Q_H \cap \partial \mathcal D$ consist of exactly $2$ points, one on both 
sides of $Q_H$, that is $B_H=\{ q_{H,l}, q_{H,r} \}$. Here, $l$ and $r$ stand for left and right points, when viewed
from the direction $v$. 
For simple billiard tables, we consider the four points in $\mathcal M$, whose trajectory up to the next
collision projects onto $\partial Q_H$ in the configuration space. The set of these four points is denoted
by
\begin{equation}
\label{eq:horizonbd}
A_H = \{ (r_{H,l,1},\varphi_{H,l,1}), (r_{H,l,2},\varphi_{H,l,2}), (r_{H,r,1},\varphi_{H,r,1}), (r_{H,r,2},\varphi_{H,r,2})\}.
\end{equation}
We will say that the elements of $A_H$ are boundary points of the corridor $H$.
Note that if $q_{H,s}$ is a regular point (for $s=l,r$), then $r_{H,s,1} = r_{H,s,2}$ corresponds to a vertical
line segment in the interior of $\mathcal M_{i,j}$. On the other hand, if $q_{H,l}$ is a corner point, then
$r_{H,l,1}$ corresponds to the right side of $\mathcal M_{i,j}$ and $r_{H,l,2}$ corresponds to the left side of
$\mathcal M_{i,j+1}$ 
(and vice versa for the right boundary:
if $q_{H,r}$ is a corner point, then
$r_{H,r,2}$ corresponds to the right side of $\mathcal M_{i',j'}$ and $r_{H,r,1}$ corresponds to the left side of
$\mathcal M_{i',j'+1}$).
In this case and with a slight abuse of notation, we will also say that the elements of $A_H$ are corner points.
Note that whenever $q_{H,s}$ is a corner point, it is necessarily concave.
See Figure \ref{fig1} for a typical arrangement in the case that both sides are bounded by
a corner point (for simplicity, we depict $i=j=j'=1$, $i'=2$, $v$ is horizontal, so by convention pointing to the right).
The figure represents a part of the scatterer configuration lifted from $\mathbb T^2$ to $\mathbb R^2$.
The point $q_{H,r} = \Gamma_{1,1} \cap \Gamma_{1,2}$ is the corner point on the bottom left as well as
the bottom right of the figure. 
The two corresponding signed angles $\varphi_{H,r,k}$, $k=1,2$ are between the dashed lines
(normals to the curves) and the lower dotted line. 
Similarly, observe the left boudary of the corridor on 
the top part of the figure. In this paricular case, 
we have $\varphi_{H,r,1} >0$, $\varphi_{H,r,2} <0$,
$\varphi_{H,l,1} <0$,
$\varphi_{H,l,2} >0$ although these signs may be different for other corridors bounded by two corner points.
Note that 
\begin{equation}
\label{eq:criticalorbit}
F(r_{H,r,1},\varphi_{H,r,1}) = (r_{H,r,2},-\varphi_{H,r,2})
\end{equation}
which corresponds to an improper collision. According
to our definition, $\Phi^{\tau(r_{H,r,1},\varphi_{H,r,1})}
(r_{H,r,1},\varphi_{H,r,1})$ takes two values:
$(r_{H,r,2},-\varphi_{H,r,2})$ and $(r_{H,r,1},\varphi_{H,r,1})$
(where we identified $\mathcal M$ with a subset of $\Omega$).
This corresponds to an improper collision and so by our
terminology \eqref{eq:criticalorbit} holds.
There are two possible types of free flights from 
regular points close to $(r_{H,r,1},\varphi_{H,r,1})$. One possibility is 
a flight of bounded length, terminating on $\Gamma_{1,1}$. For such points $(q,v)$,
$F(q,v)$ is close to $(r_{H,r,2},-\varphi_{H,r,2})$. The other possibility is a very long flight
in the corridor which eventually terminates on $\Gamma_{2,2}$. For such points $(q,v)$,
$F(q,v)$ is close to $(r_{H,l,2},-\varphi_{H,l,2})$.
The local geometry of such orbits will be 
studied more carefully later.




\begin{figure}
\begin{center}

\begin{tikzpicture}[scale=4.5]


\draw (1,0.5) arc (290:335:0.5);
\draw (0.9236,0.75) arc (195:200:3);

\draw (2.2,0.5) arc (290:335:0.5);
\draw (2.1236,0.75) arc (195:200:3);

\draw (-0.2,0.5) arc (290:335:0.5);
\draw (-0.2764,0.75) arc (195:200:3);

\draw (0,0) arc (135:159:1);
\draw (0.073,-0.375) arc (0:22:1);

\draw (2,0) arc (135:159:1);
\draw (2.073,-0.375) arc (0:22:1);


\draw[densely dotted] (-0.4,0.5) -- (2.4,0.5) ;
\draw[densely dotted] (-0.2,0) -- (2.2,0);

\draw[dashed] (0,0) -- (0.2, 0.08);
\draw[dashed] (1,0.5) -- (0.6,0.4);

\draw[dashed] (-0.2,0.5) -- (-0.15, 0.3);
\draw[dashed] (2,0) -- (1.85, 0.15);

\node[right] at (0.1,0.02) {$\varphi_{H,r,1}$};
\node[right] at (1.7,0.02) {$\varphi_{H,r,2}$};
\node[right] at (-0.2,0.45) {$\varphi_{H,l,1}$};
\node[right] at (0.5,0.45) {$\varphi_{H,l,2}$};

\node[right] at (0.05,-0.3) {$\Gamma_{1,2}$};
\node[right] at (-0.4,-0.3) {$\Gamma_{1,1}$};
\node[right] at (2.05,-0.3) {$\Gamma_{1,2}$};
\node[right] at (1.6,-0.3) {$\Gamma_{1,1}$};

\node[right] at (0.06,0.7) {$\Gamma_{2,1}$};
\node[right] at (-0.45,0.7) {$\Gamma_{2,2}$};

\node[right] at (1.26,0.7) {$\Gamma_{2,1}$};
\node[right] at (0.75,0.7) {$\Gamma_{2,2}$};

\node[right] at (2.46,0.7) {$\Gamma_{2,1}$};
\node[right] at (1.95,0.7) {$\Gamma_{2,2}$};

\draw[->] (1,0.25) -- (1.2,0.25);
\node[above] at (1.1,0.25) {$v$};

\end{tikzpicture}
\caption{A simple corridor bounded by two corner points} \label{fig1}
\end{center}
\end{figure}
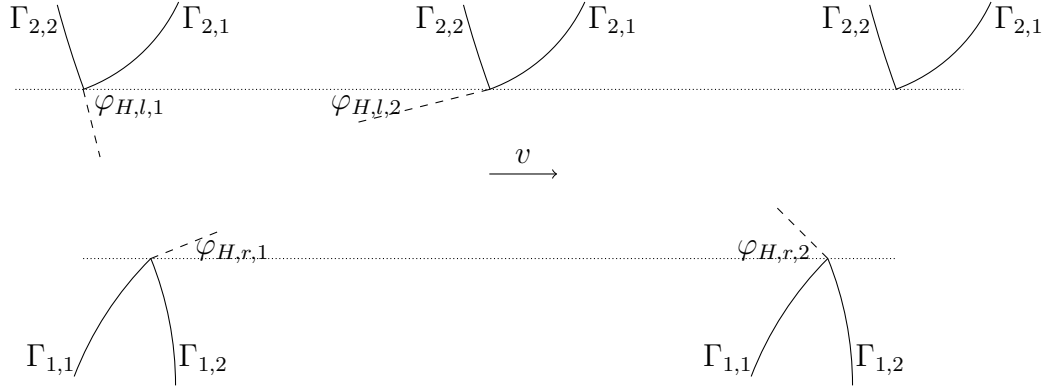

Now we define $A = \cup_{H \text{ corridors}} A_H$. With these notations, we are ready to introduce 
our assumptions
\begin{itemize}
\item[(A1)] $\mathcal D$ is simple.
\item[(A2)] For any $(r,\varphi) \in A$, if $r$ corresponds to a corner point, then $|\varphi| \neq \pi/2$.
\end{itemize}

It seems likely that our
results remain true if we 
remove (A1) and (A2) but the proof becomes more complicated 
so we assume them for convenience. 




\subsection{Definitions}
\label{sec:def}

We will denote by $C$ any constant only depending on $\mathcal D$, whose explicit value is irrelevant. In particular, the value of $C$ may change
from line to line. 

The billiard map $F$ is hyperbolic and ergodic. In particular, there exists uniformly transversal families of stable and unstable cones. Specifically, 
there are cones $\mathcal C^{u/s}_x$ for every $x \in \mathcal M$
so that $D_x F(\mathcal C^u_x) \subset F(\mathcal C^u_{F(x)}) \cup \{ 0 \}$
and 
$D_x F^{-1}(\mathcal C^s_x) \subset F(\mathcal C^s_{F^{-1}(x)}) \cup \{ 0 \}$.
Furthermore, there is a positive number $a$ so that for any $x \in \mathcal M$ 
and any
$(d r_1, d \varphi_1) \in \mathcal C^{u}_x $,
$(d r_2, d \varphi_2) \in \mathcal C^{s}_x $, 
we have $a \leq d \varphi_1/d r_1$ and $d\varphi_2/d r_2 \leq -a$. Furthermore, at least one of the following two inequalities hold:
$d\varphi_1/ dr_1 \leq a^{-1}$, $- a^{-1} \leq d\varphi_2/dr_2$
(see \cite[Section 9]{C99}. We note that all of these inequalities hold when there are no corner points.) 
Vectors in $\mathcal C^{u}_x $ ($\mathcal C^{s}_x $) are called unstable (stable).
It follows that the exists some number $\gamma >0$ so that at any point $x \in \mathcal M$, 
the angle between any stable and unstable vector
is bounded from below by $\gamma$ and no horizontal vector (that is $d\varphi = 0$) can
be in the stable/unstable cones. 
The standard way of defining the stable/unstable cones is 
as the $DF$/$DF^{-1}$ image of the cones $dr d \varphi \geq 0$.
We will briefly refer to these properties as 
transversality.

Hyperbolicity needs to be understood in the sense that for almost every point there is 
an unstable and a stable manifold through this point, however they can be arbitrarily short.
This is due to the singularities. 
The hyperbolicity is uniform in the sense that there are constants $C_\#$ and $\Lambda_* >1$ so that
for any $n \geq 1$,
for every unstable vector $u$,
\begin{equation}
\label{eq:hyper}
\| DF^n(u) \| \geq C_\# \Lambda_*^{n} \|u \|
\end{equation}
and for any stable vector $v$,
$$
\| DF^{-n}(v) \| \geq C_\# \Lambda_*^{n} \|v \|.
$$
Let us write
$$
S_0 = \cup_{i,j} \Gamma_{i,j} \times \{\pm \pi /2\}, \quad V_0 = \cup_{i} \cup_j \partial \Gamma_{i,j} \times
[\pi/2, \pi/2],
$$
that is $S_0$ is the set of grazing collisions and $V_0$ is the set of collisions at the corner points,
and $R_0 = S_0 \cup V_0$. Furthermore, let $R_{m,n} = \cup_{l=m}^n F^l R_0$. Then for any $n \geq 1$ (including $n = \infty$), the singularity set of $F^n$
is $R_{-n,0}$ and the singularity set of $F^{-n}$
is $R_{0,n}$.
Furthermore,  as usual, we introduce secondary (artificial) singularities
$$
\hat S_{\pm k} = \{ (r, \phi): \phi = \pm \pi/2 \mp k^{-2}\}
$$
for some $k \geq k_0$ to control distortion.
We denote 
$$\mathbb H_0
= {\{ r, \varphi \in \mathcal M: - \pi/2 - k_0^{-2} \leq \varphi \leq  \pi/2 - k_0^{-2}  \}}
$$
$$\mathbb H_k
= {\{ r, \varphi \in \mathcal M: \pi/2 - k^{-2} \leq \varphi \leq  \pi/2 - (k+1)^{-2}  \}}
$$
$$
\mathbb H_{-k}
= {\{ r, \varphi \in \mathcal M: -\pi/2 + (k+1)^{-2} \leq \varphi \leq - \pi/2 + k^{-2}  \}}.
$$ 
The extended set of singularities is
$$
R_0^{\mathbb H} = S_0^{\mathbb H} \cup V_0,  \text{ where } S_0^{\mathbb H} = S_0 \cup (\cup_{k \geq k_0} \hat S_{\pm k} )
$$
and likewise $R_{n,m}^{\mathbb H} = \cup_{l=m}^n F^l R_0^{\mathbb H}$. 
We say that a $C^2$ curve in $W \subset \mathcal M$ is unstable if at every point $x \in W$, the tangent line
$T_xW$ is in the unstable cone, and $W$ has a uniformly bounded curvature and is disjoint to 
$R_0$ (except possibly for its endpoints). 
We say that an unstable curve is homogeneous if it lies entirely in $\mathbb H_k$ for some $k$ ($k=0$ or $|k| \geq k_0$).
It is useful to think about unstable curves as smooth curves in the 
northeast-southwest direction on $\mathcal M$.

As in \cite[section 4]{ChD09}, 
we say that $\ell = (W, \rho)$ is a standard pair if $W$ is a homogeneous unstable curve and 
$\rho$ is a probability measure supported on $W$ that satisfies
$$
\left|
\log \frac{d \rho}{d Leb} (x) - \log \frac{d \rho}{d Leb} (y)  
\right| \leq C_0
\frac{|W(x,y)|}{|W|^{2/3}}.
$$
Note that there is some constant $C$ so that for any standard pair $\ell = (W, \rho)$ and for any $x, x' \in W$ 
\begin{equation}
\label{eq:densityosc}
e^{- C |W|^{1/3}} \leq \frac{\rho(x)}{\rho(x')} \leq e^{ C |W|^{1/3}}.
\end{equation}

The image of a standard pair is a weighted average of standard pairs. More precisely, if $\ell = (W, \rho)$ is a standard pair
and $\nu_{\ell}$ is the measure on $W$ with density $\rho$, then $F(W) = \cup_i W_i$, where $W_i$ are
homogeneous unstable curves and 
$F_*(\nu_{\ell}) = \sum_i c_i \nu_{\ell_i}$, where $\ell_i = (W_i, \rho_i)$ are standard pairs 
(see \cite[Proposition 4.9 ]{ChD09}).
We will also write $F_*({\ell}) = \sum_i c_i {\ell_i}$.

Substandard families are weighted averages of standard pairs
where the sum of the weights is $\leq 1$. That is, $\mathcal G = 
((W_{\alpha}, \rho_{\alpha})_{\alpha \in \mathfrak A}, \lambda)$ is a substandard family if 
$(W_{\alpha}, \rho_{\alpha})$'s are standard pairs and $\lambda$ is a subprobability 
measure on $\mathfrak A$. 
We assume that the $W_{\alpha}$'s are disjoint. Given a 
substandard family 
$\mathcal G$, it induces a measure $\nu_{\mathcal G}$ on $\mathcal M$ by
$$
\nu_{\mathcal G} (B) = \int_{\alpha \in \mathfrak A} \nu_{\alpha} (B \cap W_{\alpha}) d \lambda (\alpha)
\text{ for } B \subset \mathcal M \text{ Borel sets, }
$$
where $\nu_{\alpha}$ is the measure on $W_{\alpha}$ with density $\rho_{\alpha}$.
In case $\lambda_{\mathcal G}$ is a probability measure, we call $\mathcal G$ a standard family.
Now given a point $x \in W_{\alpha}$, denote by $r_{\mathcal G}(x)$ the distance between $x$ and 
the closest endpoint of $W_{\alpha}$ (measured along $W_{\alpha}$ with respect to arclength). 
We introduce the notion of
the $\mathcal Z_q$ function for $q \in (0,1]$ by
$$
\mathcal Z_q(\mathcal G) = \sup_{\varepsilon >0} 
\frac{\nu_{\mathcal G} (r_{\mathcal G} < \varepsilon) }{\varepsilon ^ q}.
$$
For example if $\mathcal G$ consists of only one standard pair $(W, \rho)$, then 
\begin{equation}
\label{eq:Zas}
\mathcal Z_q(\mathcal G) \sim 2^q/|W|^q
\end{equation}
as $|W| \rightarrow 0$.
A fundamental fact about the class of standard families is that they are preserved under iterations of the map $F$.

Given a homogeneous unstable curve $W$, its image $F^m(W)$ will consist of a collection of homogeneous
unstable curves $W_i$. For each $i$, let $\Lambda_i = \Lambda_{i,m}$ be the minimal expansion factor of
$F^m$ on $F^{-m} W_i$. We say that the $m$-step expansion holds if 
$$
\lim_{\delta \rightarrow 0} \sup_{W: |W|< \delta} \sum_i \frac{1}{\Lambda_{i,m}} < 1,
$$
where the supremum is taken over homogeneous unstable curves.
We note that the above limit is traditionally written as $\liminf$, however the sequence as $\delta \rightarrow 0$ is 
non-increasing and bounded below, so the limit always exists.

For given $n$, let $\xi^n$ be the partition of $\mathcal M \setminus R_{0,n}^{\mathbb H}$ into connected components.
Now the forward separation time of points $x, y \in \mathcal M$, denoted by $s_+(x,y)$, is defined as the smallest $n$ 
so that $x$ and $y$ belong to different partition elements of $\xi^n$. Likewise, we define $s_-(x,y)$ as the backward separation time. 
We say that $f: \mathcal M \rightarrow \mathbb R$
is dynamically H\"older if
there are constants $C = C(f)$ and $\vartheta = \vartheta (f) < 1$ so that
for any $x,y$ on the same unstable manifold $W^u$,
$$
|f(x) - f(y)| \leq C \vartheta^{s_+(x,y)}
$$
and for any $x,y$ on the same stable manifold $W^s$,
$$
|f(x) - f(y)| \leq C \vartheta^{s_-(x,y)}.
$$

We will write $a_n \sim b_n$ if $\lim_{n \rightarrow \infty} a_n / b_n = 1$ and $a_n \approx b_n$ if 
there are is a constant $C$ so that  $1/ C \leq a_n / b_n \leq C$ for all $n$.

\section{Results}
\label{sec:res}

Now we can state our results.
\begin{theorem}
\label{thm1}
For all type D1 billiard tables there is some $m_0$ so that the $m_0$-step expansion 
holds.
\end{theorem}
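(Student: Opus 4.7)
The plan is to combine the $m$-step expansion arguments developed for type B (smooth, infinite horizon) and for type C (finite horizon with corner points, \cite{DST13}). The defining feature of type D1---that no corner point bounds a corridor---allows these two sources of complexity to be decoupled, because the iterates of corner-point singularities remain in the region of bounded free flight while corridor-induced singularities are purely grazing.

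The starting point is the pointwise expansion estimate $\|DF(u)\|\geq (1+2\tau\kappa)/\cos\varphi$ for unstable vectors $u$, together with the unconditional bound $\|DF^m(u)\|\geq C_\#\Lambda_*^m\|u\|$ from \eqref{eq:hyper}. For a small homogeneous unstable curve $W$ with $|W|<\delta$, I would partition the components $W_i$ of $F^m W$ according to the source of the nearest preceding cut in $R_{-m,0}^{\mathbb H}$. This set is the union of the iterated grazing/homogeneous part $\bigcup_{j=0}^{m-1}F^{-j}S_0^{\mathbb H}$ and the iterated corner part $\bigcup_{j=0}^{m-1}F^{-j}V_0$. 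In type D1, every corridor boundary is a regular grazing orbit, so every component $W_i$ born of a long free flight is already accounted for by some $\hat S_{\pm k}$ with $k$ large; the corner singularities $V_0$ never lie in a corridor, so the flights near corner-singularity cuts are uniformly bounded.

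Accordingly, I would split the sum into two pieces. For the homogeneity part, any component $W_i$ whose orbit under $F^m$ visits $\mathbb H_{\pm k}$ at some intermediate time $j$ satisfies $\Lambda_{i,m}\geq C\,k^2$. By the standard one-step growth lemma (e.g.\ \cite[Lemma 5.56]{CM06}), the number of such components landing in strip $k$ at step $j$ is bounded independently of $j$ and $k$. Summing the convergent series $\sum_{k\geq k_0}k^{-2}$ yields a contribution that can be made smaller than $1/2$ by choosing $k_0$ sufficiently large and $|W|$ small (the dependence on $|W|$ enters only through the initial cuts, which are controlled by transversality of $W$ to a finite collection of curves). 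For the corner part, every component $W_i$ satisfies $\Lambda_{i,m}\geq C_\#\Lambda_*^m$, so it suffices to bound the count $K_m^V(W)$ of such components polynomially in $m$.

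The main obstacle---and the main ingredient to be imported from \cite{DST13}---is the polynomial bound $K_m^V(W)\leq Cm^{p}$. In type C this follows from (i) the absence of cusps, which bounds corner-sequence lengths uniformly \cite[Section 9]{C99}; (ii) the finiteness of corner points; and (iii) bounded free flights between consecutive corner-related cuts. In type D1, hypothesis (iii) is replaced by the observation that $V_0$ is disjoint from every corridor, so along any orbit the flights bracketing a visit to a neighborhood of $V_0$ are uniformly bounded, and the counting argument of \cite{DST13} transfers verbatim. Once $K_m^V(W)\leq Cm^{p}$, the corner contribution $K_m^V/\Lambda_*^m$ is below $1/2$ for all $m\geq m_0$ with $m_0$ large, and adding the two halves gives $\sum_i 1/\Lambda_{i,m}<1$, as required.
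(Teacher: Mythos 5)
Your decomposition and the key observation---that in type D1 the corner singularities $V_0$ never lie in a corridor, so long free flights are always detected by the homogeneity strips $\hat S_{\pm k}$ rather than by corner cuts---are exactly the ideas driving the paper's proof (there this appears in Lemma \ref{lem:comp}, where the cutoff $\mathfrak n_0$ combined with Lemma \ref{lem:flightgrowth} ensures that any point with a long flight lands in $\mathbb H_k$ for $|k|>k_0$, so only components that have stayed in $\hat{\mathbb H}_0$ need to be counted, and for those the DST13 complexity bound transfers because their flights avoid corner neighborhoods).

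However, your treatment of the nearly grazing (homogeneity) part over $m$ steps has a genuine gap. You assert that ``the number of such components landing in strip $k$ at step $j$ is bounded independently of $j$ and $k$'' and then sum $\sum_{k\ge k_0}k^{-2}$ over a single step. The one-step growth lemma bounds the number of components of $F(W')$ in $\mathbb H_k$ \emph{for a single homogeneous unstable curve $W'$}; applied to all of $F^{j-1}(W)$ it gives a count proportional to the (possibly exponentially large) number of components of $F^{j-1}(W)$, not a constant. If you sum naively over $j=1,\dots,m$ you pick up at least a factor $m$, and without using the expansion accumulated before and after the grazing step the bound does not close. The paper avoids this by proving a genuinely \emph{one-step} estimate (Lemma \ref{lem:grazing}: for $|W|$ small and $k_0$ large, $\sum^*_i \Lambda_{i,1}^{-1}<\varepsilon$, via Remark 5.59 of \cite{CM06}) and then combining it with the regular complexity bound of Lemma \ref{lem:comp2} through the subadditive inequality
\[
\mathcal L_{n+m}(W) \;\le\; \sum_i \frac{1}{\Lambda_{i,n}}\,\mathcal L_m(W_{i,n}),
\]
which is precisely \eqref{eq:subadd}. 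This inductive use of subadditivity, together with Lemma \ref{lemma:upperbdlen} to keep iterates short, is the piece your write-up is missing; the remaining ingredients you identify correctly.
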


\begin{theorem}
\label{thm2}
For all type D2 billiard tables satisfying (A1) and (A2) 
and for all $q \in (0,1)$ we have the following.
There are 
constants $M\in \mathbb N$,
$\varkappa <1$ and $\delta_0 >0$ so that for any standard pair $\ell = (W, \rho)$
with $|W| < \delta_0$, $\mathcal Z_q(F^M_*(\ell)) < \varkappa \mathcal Z_q(\ell)$.
\end{theorem}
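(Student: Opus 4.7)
The approach is to establish a growth lemma of the form
\[
\mathcal Z_q(F^M_*(\mathcal G)) \leq \varkappa_0 \, \mathcal Z_q(\mathcal G) + C_0 \, \lambda(\mathfrak A)
\]
for some $\varkappa_0<1$, with $M$ to be chosen. Specializing to a single standard pair $\ell$ and invoking $\mathcal Z_q(\ell) \approx |W|^{-q}$ from \eqref{eq:Zas}, the additive term is absorbed into the contraction once $|W| < \delta_0$ is small. Setting $F^M(W)=\bigsqcup_i W_i$, writing $p_i = |F^{-M}(W_i)|/|W|$, and letting $\Lambda_i$ be the minimal expansion of $F^M$ on $F^{-M}(W_i)$, the standard bookkeeping (weights $c_i \approx p_i$ via the density oscillation bound \eqref{eq:densityosc}, together with $|W_i| \approx \Lambda_i p_i |W|$) reduces the desired inequality to
\[
\Sigma_M \;=\; \sum_i p_i^{1-q}\, \Lambda_i^{-q} \;\leq\; \varkappa_0 \;<\; 1
\]
for $M$ sufficiently large, uniformly over short unstable curves.

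I would split $\{W_i\}$ into two classes by backward history. \textbf{Regular pieces}, whose preimages $F^{-1}(W_i),\dots,F^{-M}(W_i)$ all avoid a fixed small neighborhood of the corridor-boundary set $A$, behave as in the type D1 setting: Theorem \ref{thm1} together with \eqref{eq:hyper} forces $\Lambda_i \geq C_\# \Lambda_*^M$ with only boundedly many pieces per step, so the regular contribution to $\Sigma_M$ is at most $\varkappa_0/2$ once $M$ is chosen as a large multiple of the $m_0$ of Theorem \ref{thm1}. \textbf{Long-flight pieces} are those whose orbit makes a long flight at some intermediate step $j<M$ near a concave corner $(r_{H,r,1},\varphi_{H,r,1}) \in A$. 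Here (A1) gives a well-defined corridor geometry while (A2), through $|\varphi_{H,r,1}|\neq\pi/2$, provides transversality of the entering trajectory to the corner lines. One then parametrizes the fan by the arrival homogeneity strip $\mathbb H_k$: precise estimates (roughly $p_k \approx k^{-3}$ and $\Lambda_k \approx k^2$ in the cleanest case) for the length and expansion of each fan piece, combined with the extra hyperbolic factor $\Lambda_*^{M-j-1}$ contributed by the subsequent regular iterations, yield a total long-flight contribution to $\Sigma_M$ that can be forced below $\varkappa_0/2$ by choosing $M$ large.

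\textbf{Main obstacle.} The principal difficulty is precisely the geometric analysis of the long-flight fan at a concave corner bounding a corridor, which is the type D2 feature absent in types A, B and D1. The improper collision relation \eqref{eq:criticalorbit} forces a bifurcation of nearby trajectories into a short branch landing on $\Gamma_{1,1}$ and a long branch continuing through the corridor and terminating on $\Gamma_{2,2}$; one must derive sharp quantitative relations between preimage length in $W$, image length within each $\mathbb H_k$, and expansion $\Lambda_k$, with the role of smooth convexity in the classical grazing case now played by the transversality supplied by (A2). I expect this technical lemma to be isolated and proved in Section \ref{sec:lemma9}, and it is the main novelty over the type D1 argument. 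Combining it with the regular-piece estimate yields $\Sigma_M < 1$ and completes the proof.
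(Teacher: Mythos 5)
Your split into regular pieces and long-flight pieces is the right skeleton, and your regular-piece estimate is essentially the paper's Lemma \ref{lem:truncation} (obtained there by smoothing the corners and appealing to Theorem \ref{thm1}, a step you skip). But the proposal has a genuine gap that the paper has to work hard around: you treat ``made a long flight at some step $j<M$'' as if each piece visits a corridor at most once, and conclude that the factor $\Lambda_*^{M-j-1}$ absorbs the one loss. In fact nothing in your argument rules out $k\sim M$ successive long flights within $M$ iterations, each multiplying $\mathcal Z_q$ by a constant $C_0>1$; the paper's Remark \ref{rem:expansion} emphasizes that one cannot show $C_0<1$, so iterated losses could overwhelm any hyperbolic gain. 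The essential missing ingredient is the non-recurrence of corridor boundary points: Lemma \ref{lem:nonper} (proved in Section \ref{sec:lemma9}) shows that for every $K$ one can choose $N$ so large that $\mathcal E_N\cap F^{-k}\mathcal E_N=\emptyset$ for $k=1,\dots,K$, and Lemma \ref{lem:nonper2} extends this to include the $\mathcal B_{N'}$ sets arising from type 2 corridors, giving the crucial conclusion \eqref{eq:1visit} that in a window of length $M$ there are at most three consecutive corridor visits. This, not the fan geometry, is the main novelty of the type D2 argument, and it is what Section \ref{sec:lemma9} is actually devoted to.

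Separately, the fan estimates you quote are not the right ones. At a type 3 (two-corner) corridor, (A2) forces $\cos\varphi'\geq\beta>0$, so the image is \emph{not} parametrized by $\mathbb H_k$; instead Lemma \ref{lem:geomcornercorr} shows that the free flight is bounded on any unstable curve there, so the fan is \emph{finite} (indices $n$ range over a dyadic window $[n_W,\Cr{c:star}n_W]$) and the one-step sum is bounded by a geometric constant, see \eqref{eq:type3geomcomp}. The infinite fan occurs only at the \emph{regular} boundary point of a type 2 corridor, where $|V_n|\approx n^{-2}$, $|W_n|\approx n^{-1}$, $\Lambda_n\approx n$; the resulting series $\sum n^{q-2}$ converges for $q<1$ but its constant need not be $<1$, which is why the argument cannot close without the non-periodicity lemma. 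Your exponents $p_k\approx k^{-3}$, $\Lambda_k\approx k^2$ belong to the grazing (type 1) picture and do not describe the concave-corner fan.
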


\begin{theorem}
\label{thm3}The set of billiard 
tables satisfying (A1) and (A2) is open and dense in $\bm D$.
\end{theorem}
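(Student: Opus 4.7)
I would split Theorem \ref{thm3} into its openness and density parts, both carried out within a fixed combinatorial component $\bm D_{d,J_1,\dots,J_d}$ since distinct components sit at infinite distance. The main input is that the data entering (A1) and (A2) --- the finite list of corridors, their boundary point sets $B_H$, and the angles $\varphi$ of the associated points in $A_H$ --- depend continuously on the parametrizations $f_{i,j}$, since each corridor is pinned by a tangency or corner condition at each of its two bounding scatterers.

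For openness, fix $\mathcal D_0$ satisfying (A1) and (A2), with corridors $H_1,\dots,H_N$. Because $|B_{H_k}|=2$, every point of $\partial\mathcal D$ lying on $\partial Q_{H_k}$ other than the two elements of $B_{H_k}$ sits at positive distance from $\partial Q_{H_k}$, so a sufficiently small $\mathcal C^3$-perturbation can create no new tangency along $H_k$; the two boundary points and the direction $v_k$ deform continuously, hence $|B_{H_k}|=2$ persists and the angles $\varphi$ at the corner points of $A_{H_k}$ stay bounded away from $\pm\pi/2$. To rule out entirely new corridors in some direction $v'$, I would argue that a corridor in direction $v'$ requires an open strip of parallel lines missing every scatterer; since $\mathcal D_0$ has no such corridor, every line in direction $v'$ meets some scatterer, transversally on a set of positive measure by admissibility, and a small enough perturbation cannot clear this obstruction.

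For density, given arbitrary $\mathcal D_0$, I would exhibit nearby tables satisfying (A1) and (A2) by a finite sequence of small $\mathcal C^3$-bump perturbations supported in small neighborhoods of the offending points. If corridor $H$ violates (A1) because $B_H$ contains an extra boundary point $p$, I push the offending scatterer $\Gamma_{i,j}$ slightly along its inward normal with a bump supported near $p$; the existence of that extra tangency is a codimension-one condition on the perturbation, so generic arbitrarily small perturbations remove it. If a corner boundary point $q$ has $|\varphi|=\pi/2$, then $v_H$ is tangent at $q$ to one of the two smooth pieces meeting at $q$, and a small local change of that piece's one-sided tangent at $q$ breaks the tangency.

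The main obstacle is interaction among the fixes: a perturbation repairing one violation could in principle spawn another, or even give birth to a brand new corridor that itself fails one of the conditions. I would control this by scheduling the finitely many perturbations with geometrically decreasing amplitudes --- so that each later fix is absorbed within the openness margin provided by the earlier ones --- and concluding with a single generic global perturbation of tiny amplitude to clear any residual codimension-one violations. This works because the locus of parametrizations violating (A1) or (A2) is a locally finite union of proper $\mathcal C^3$-closed subsets of $\bm D_{d,J_1,\dots,J_d}$.
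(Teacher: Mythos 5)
Your high-level organization (restrict to a combinatorial component, prove openness by a margin/continuity argument, prove density by finitely many local bump perturbations) matches the paper's, but there is a concrete gap that the paper addresses with an extra idea you do not have: \emph{incipient corridors}. A table can satisfy (A1) and (A2) and yet carry an infinite orbit that is grazing on both sides, so that the associated $Q_H$ has empty interior and is therefore not a corridor at all; (A1) and (A2) say nothing about such configurations. Your key step in the openness argument, ``since $\mathcal D_0$ has no corridor in direction $v'$, every line in direction $v'$ meets some scatterer transversally on a set of positive measure,'' is false precisely when $\mathcal D_0$ has an incipient corridor in direction $v'$: there the critical line meets scatterers only tangentially, and an arbitrarily small $\mathcal C^3$-perturbation can open this into a genuine corridor whose boundary may contain many points or grazing corner points, breaking (A1) or (A2). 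The paper therefore does not prove openness of the set $\{\text{(A1) and (A2) hold}\}$ directly; it introduces the auxiliary set $\bm D_0$ of tables satisfying (A1), (A2) and having \emph{no incipient corridors}, and proves that this smaller set is open and dense (which is what is actually needed for the genericity assertion).

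Your density argument is also looser than the paper's at the point where you worry about ``interaction among the fixes.'' The scheduling-with-decreasing-amplitude plus final generic perturbation idea is not obviously sufficient, because each fix can in principle spawn a new corridor (in particular from an incipient one), and you have no a priori control on the combinatorics of what appears. The paper handles this cleanly by using a one-sided perturbation: its local enlargement lemma always produces $\tilde{\mathcal D}\subset\mathcal D$, so that \emph{no} new corridors can be created, only closed or trimmed. This monotonicity, together with a fixed ordering of the finitely many corridors and an induction in which the openness established in Step 1 protects the already-repaired corridors from the small perturbations still to come, replaces your appeal to a ``locally finite union of proper $\mathcal C^3$-closed subsets,'' which you assert but do not justify. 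If you incorporate the incipient-corridor condition into your openness statement and switch to purely scatterer-enlarging local perturbations in the density step, your argument would align with the paper's.
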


\begin{proof}[Proof of Theorem \ref{thm0} assuming Theorems \ref{thm1}, \ref{thm2}]

In the case of type D1 billiard tables, the 
key estimate is the $m_0$-step expansion, provided by Theorem \ref{thm1}.
Once it is established, the exponential decay of correlations and the central
limit theorem
follow from the theory developed in \cite{C99} 
as it was also noted in case of type C billiards in \cite{DST13}. 

In the case when the assumptions (A1) and (A2) are satisfied,
the result follows from \cite{CZ09}. In that work, 
the exponential decay of correlation follows from some abstract assumptions denoted by (H1) - (H5). 
In our case, assumptions (H1)-(H4) are standard
as is usual for billiards (see e.g. \cite[section 9]{C99}). We do not know how to prove (H5) 
(see Remark \ref{rem:expansion} below)
however we have Theorem \ref{thm2} instead. In the proof of \cite{CZ09}, (H5) is only used to derive 
the growth lemma (see \cite[Lemma 3(a)]{CZ09}) which is standard once 
our Theorem \ref{thm2} holds. 
That being said, we can replace (H5) by Theorem \ref{thm2} and conclude 
the result of Theorem \ref{thm0}.
\end{proof}

\begin{remark}
\label{rem0}
Under the same assumptions as Theorm \ref{thm0}, several other results follow immediately from the abstract theory. 
Indeed, a "magnet" is constructed in \cite{CZ09} which implies the existence of a Young tower \cite{Y98} with exponential return times.
Thus the central limit theorem \cite{Y98},
large deviation principles \cite{RY08},
almost sure invariance principle, law of iterated logarithm \cite{C06, PhS75}, etc.
follow.
\end{remark}

\begin{remark}
It is important to note that the test functions in the setup of Theorem \ref{thm0} and
Remark \ref{rem0} are assumed
to be bounded and dynamically H\"older. 
Important functions of interest are the free flight time
$\tau: \mathcal M \rightarrow \mathbb R$ and the displacement vector
$\kappa: \mathcal M \rightarrow \mathbb R^2$ defined as $\Pi_{\mathcal D}(F(x)) - \Pi_{\mathcal D}(x)$, lifted to the universal cover $\mathbb R^2$.
Both of these functions are dynamically H\"older but unbounded.
In particular, we do not claim the central limit theorem for 
the position of the billiard particle in the periodic extension of $\mathcal D$ from $\mathbb T^2$ to $\mathbb R^2$ (also 
known as Lorentz gas). In fact, we expect that the this position will converge to the normal distribution under the 
scaling $\sqrt{t \log t}$ (where $t$ is continuous time in case the flow is considered, 
and 
collision time in case of the map) as in \cite{SzV07}, but we do not study this question here.
\end{remark}

\section{Proof of Theorem \ref{thm1}}
\label{sec:thm1}

The proof of Theorem \ref{thm1} is based on similar proofs for type C as in
\cite{DST13} 
and type B as in \cite{CM06}. As such, we only give detailed 
arguments
at places where our proof differs from these references and 
otherwise cite the necessary lemmas from \cite{CM06,DST13}.

First we review the structure of corridors and singularities, see \cite[Section 4.10]{CM06} for details.
Let us fix a regular billiard table. There are finitely many points,
$$A =\{ x_h = (r_h, \varphi_h), h=1,...,h_{\max}, \varphi_h = \pm \pi/2\}$$
that are periodic and whose trajectories bound the corridors.
The singularity structure of $F$ and $F^{-1}$ near $x_h$ is as follows. There are infinitely many singularity curves
accumulating at $x_h$. Specifically, there are connected components $D^+_{h,n}$ for $n \geq 1$ in 
a neighborhood of $x_h$ so that $F$ is smooth on $D^+_{h,n}$ and 
the trajectories of the points in $D^+_{h,n}$
pass by $n$ copies of the given scatterer before the next collision. Likewise,
$D^-_{h,n}$ is a set on which $F^{-1}$ is continuous, and $F(D^+_{h,n}) = D^-_{h',n}$ for some $h'$.
The size of $D^+_{h,n}$ is $\approx n^{-2}$ in the unstable direction and $\approx n^{-1/2}$ in the 
stable direction. Likewise, the size of $D^-_{h,n}$ is $\approx n^{-1/2}$ in the unstable direction and $\approx n^{-2}$ in the stable direction. Consequently, 
$D^-_{h,n}$ intersects with $\mathbb H_k$ if $|k| \geq C n^{1/4}$. 
The rate of expansion of $F$ on $D^+_{h,n} \cap F^{-1}(\mathbb H_k)$ is $ \approx n k^2$.

We start by the following

\begin{lemma}
\label{lemma:upperbdlen}
There is a constant $C$ so that for any unstable curve $W$ and for any homogeneous 
connected component 
$W' \subset F(W)$, we have
$$
|W'| \leq C |W|^{1/3}.
$$
\end{lemma}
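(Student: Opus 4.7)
The plan is to bound $|W'|$ according to which homogeneity strip contains it, combining the angular width of the strip with the lower bound on the expansion of $F$ along the pre-image of $W'$ near grazing and long corridor flights.

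First I would dispose of the easy case $W' \subset \mathbb H_0$: here $\cos\varphi$ is bounded away from $0$, so the expansion factor of $F$ along unstable curves is uniformly bounded from above by some constant, giving $|W'| \leq C|W|$. Since $|W|$ is at most the diameter of $\mathcal M$ (which is at most $1$ after normalization), this already upgrades to $|W'| \leq C|W|^{1/3}$.

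In the main case $W' \subset \mathbb H_k$ with $|k| \geq k_0$, I would combine three ingredients. First, the $\varphi$-range of $\mathbb H_k$ has length $\asymp k^{-3}$; combined with the transversality bound $d\varphi/dr \geq a$ stated in the preliminaries, this yields $|W'| \leq C k^{-3}$. Second, writing $F^{-1}(W') \subset D^+_{h,n}$ for the appropriate corridor endpoint $x_h$ and $n \geq 1$ the number of copies of the scatterer traversed during the free flight (with $n=1$ corresponding to a short flight), the review of the singularity structure given just before the lemma gives the expansion rate $\asymp n k^2$ on $D^+_{h,n} \cap F^{-1}(\mathbb H_k)$, so
\[
|W'| \leq C n k^2 \,|F^{-1}(W')| \leq C n k^2 |W|.
\]
Third, since $D^-_{h',n}$ meets $\mathbb H_k$ only when $|k| \geq C n^{1/4}$, we have the constraint $n \leq C k^4$. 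Combining these,
\[
|W'| \leq C \min\bigl(k^{-3},\; k^{6} |W|\bigr),
\]
and the right-hand side is maximized at $k = |W|^{-1/9}$, where both terms equal $C|W|^{1/3}$.

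The only potentially delicate point is the expansion factor $\asymp n k^2$ (rather than $\asymp k^2$) along long corridor flights: the extra factor of $n$ could in principle defeat the bound. The saving is exactly the constraint $n \leq C k^4$ coming from the sizes of the singularity cells $D^-_{h,n}$ recalled at the start of the section, and this interplay of long-flight expansion with the singularity cell sizes is precisely what produces the exponent $1/3$ in the statement; without long flights the same argument would yield the strictly stronger exponent $3/5$.
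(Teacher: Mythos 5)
Your proof is correct and relies on the same three quantitative facts as the paper's: the expansion rate $\approx nk^2$ on $D^+_{h,n}\cap F^{-1}(\mathbb{H}_k)$, the constraint $n\leq Ck^4$ coming from the sizes of the cells $D^-_{h,n}$, and the $\varphi$-width $\approx k^{-3}$ of $\mathbb{H}_k$ combined with transversality. The organization is genuinely different, though: you split according to which homogeneity strip contains $W'$ rather than according to whether $W$ lies in a long-flight region, and you replace the paper's auxiliary curve $\bar W$ (extended so that $F(\bar W)$ fully crosses $\mathbb{H}_{\pm k}$) by a direct optimization over $k$ of the two upper bounds $|W'|\leq C k^{-3}$ and $|W'|\leq C k^6|W|$. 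This avoids the extension step entirely and also dispenses with the appeal to \cite[Exercise 4.50]{CM06} in the bounded-flight case, which is a mild streamlining. Two points should be made explicit in a polished version. First, when $W'\subset\mathbb{H}_0$ the claim that the expansion factor of $F$ is uniformly bounded uses, in addition to $\cos\varphi$ being bounded below, that the free flight on $F^{-1}(W')$ is bounded; this holds in the type D1 setting because $D^-_{h,n}$ misses $\mathbb{H}_0$ once $n$ exceeds a constant times $k_0^4$, but it is not a consequence of the angle alone. Second, the reduction ``$F^{-1}(W')\subset D^+_{h,n}$ with $n=1$ for a short flight'' is loose: an unstable curve with bounded free flight away from the corridor endpoints is not contained in any $D^+_{h,n}$. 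For such a curve, however, the expansion into $\mathbb{H}_k$ is at most a constant times $k^2\leq k^6$, so the inequality $|W'|\leq Ck^6|W|$ you need still holds and the optimization step goes through unchanged.
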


\begin{proof}
Let $N$ be an arbitrary positive integer. If $W \not \subset \cup_h \cup_{n \geq N} 
D^+_{h,n}$, then the stronger bound 
$|W'| \leq C |W|^{1/2}$ holds with $C=C(N)$ by \cite[Exercise 4.50]{CM06}.
Let us assume now that $W \subset \cup_{n \geq N} D^+_{h,n}$. We can also assume that
there is some $n$ so that $W \subset D^+_{h,n}$. Indeed, for any connected component 
$W' \subset F(W)$, by connectedness, there is some $n= n_{W'}$ so that $F^{-1}(W') \subset D^+_{h,n}$.
Restricting $W$ to $D^+_{h,n_{W'}}$ will decrease the length of $W$ but not change the length of
$W'$. Thus it is enough to prove the lemma when $W$ is replaced by $W \cap D^+_{h,n_{W'}}$.

Since $W'$ is a homogeneous unstable curve, there is  some $k$ so that
$W' \subset \mathbb H_{\pm k}$. 
We can also assume that 
$F(W) = W'$. Indeed, if $W' \subset F(W)$, the proof is easier as 
$|F(W)| \geq |W'|$.
By the discussion right before the lemma, we have $|W'| / |W| \approx nk^2$. Now extend $W$ smoothly to a longer unstable 
curve $\bar W$ so that
$\bar W' = F(\bar W)$ is maximal with $\bar W' \subset \mathbb H_{\pm k}$, i.e. the two endpoints
of $\bar W'$ belong to $\partial \mathbb H_{\pm k}$. Then 
$|\bar W'| / |\bar W| \approx |W'| / |W| \approx nk^2$. Recalling that $|k| > n^{1/4}$,
we find $|\bar W'| \leq C |\bar W| k^6$.
Next, 
by transversality, we have
$|\bar W'| \approx |k|^{-3}$ and so $|\bar W'| \leq C |\bar W|^{1/3}.$
We conclude 
$$|W'| \leq C |W| \frac{|\bar W'|}{|\bar W|}
\leq C|W| |\bar W|^{-2/3} \leq C | W|^{1/3}.
$$

\end{proof}

\begin{remark}
In the case of finite horizon if we do {\it not} require $W'$ to be homogeneous, we have
$|W'| \leq |W|^{1/2}$
by \cite[Exercise 4.50]{CM06}.
Similarly to the above proof, one can show that in case of finite horizon and if
$W'$ is required to be homogeneous, then $|W'| \leq |W|^{3/5}$ holds. Furthermore,
in the case of infinite horizon if $W'$ is not required to be homogeneous, then the weaker bound
$|W'| \leq |W|^{1/4}$ holds. Since we will not use these bounds, the proofs are omitted.
\end{remark}

We will also need an estimate on the growth of the free flight function along orbits, which is provided by the next lemma.

\begin{lemma}
\label{lem:flightgrowth}
There are constants $\Cl{c:conseclongflight}, t_0,t_1$ only depending on $\mathcal D$ so that for any point $x \in \mathcal M$
with $\tau(x) > t_1$, there are two possibilities:
\begin{itemize}
\item
either $\tau(F(x)) \in 
[\Cr{c:conseclongflight}^{-1}\sqrt{\tau(x)}, \Cr{c:conseclongflight} (\tau(x))^2]$
\item or 
$\tau(F(x)) < t_0$ in which case
$\tau(F^2(x)) \in [\Cr{c:conseclongflight}^{-1}\sqrt{\tau(x)}, \Cr{c:conseclongflight} (\tau(x))^2]$.
\end{itemize}
\end{lemma}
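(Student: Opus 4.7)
The plan is to exploit the singularity structure near the corridor boundary points $x_h \in A$ summarised at the start of this section. The condition $\tau(x) > t_1$ forces $x \in D^+_{h,n}$ for some $h$ and some large $n$, from which $\tau(x) \approx n$ up to a multiplicative constant depending only on $\mathcal{D}$; consequently $F(x) \in D^-_{h',n}$ for the paired boundary point $x_{h'}$, so $F(x)$ lies in a thin region of unstable size $\approx n^{-1/2}$ and stable size $\approx n^{-2}$ around $x_{h'}$. I would then examine whether $F(x)$ additionally lies in some long-flight region $D^+_{h'', m}$ with $m > t_1$.

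If it does, combining the two membership constraints and using that $x_{h''}$ is either $x_{h'}$ or one of the finitely many other boundary points (all at a fixed positive distance from $x_{h'}$), a careful comparison of the unstable/stable size scales $n^{-1/2}, n^{-2}$ and $m^{-2}, m^{-1/2}$ yields $c\sqrt{n} \leq m \leq Cn^2$. Since $\tau(F(x)) \approx m$, this is the first alternative of the lemma.

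If $F(x)$ does not lie in any such long-flight region, then $\tau(F(x)) < t_0$ for a uniform $t_0$ and the trajectory undergoes one bounded intermediate flight. The claim is that $F^2(x)$ falls back into a long-flight region $D^+_{h''', m}$ with $c\sqrt{n} \leq m \leq Cn^2$. Geometrically, the velocity at $F(x)$ is nearly parallel to the corridor direction $v_H$ (the previous long flight forces $\cos\varphi(F(x)) \lesssim 1/n$), and a single bounded bounce off a scatterer with curvature bounded above and below preserves this near-parallelism up to a multiplicative constant, placing $F^2(x)$ into a long-flight region with flight time comparable to $\tau(x)$. The main obstacle will be the rigorous verification of this second case, in particular showing that the quantitative near-grazing estimate $\cos\varphi \approx 1/n$ survives the intermediate collision; this uses the uniform lower bound on scatterer curvature at regular points, the no-cusp assumption, and the transversality of stable and unstable cones recalled in Section~\ref{sec:pre}.
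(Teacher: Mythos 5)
Your route is genuinely different from the paper's. The paper argues directly in configuration space: it tracks where the long flight crosses a segment $(P,P')$ joining two consecutive corridor boundary points, and splits into three cases according to the crossing point $R$ relative to two marked points $Q,Q'$ (one where the flight is tangent to the first scatterer, one where it hits the first scatterer and then grazes the next). This is essentially the argument of Sz\'asz--Varj\'u \cite[Proposition 9]{SzV07}, cited by the paper. You instead work in the collision space $\mathcal M$, membership-testing $F(x)$ against the cells $D^-_{h',n}$ and $D^+_{h'',m}$. In principle this is attractive since the cell geometry is already recorded in Section~4, but note it is not an independent shortcut: the stated sizes of $D^{\pm}_{h,n}$ are themselves derived from exactly the configuration-space geometry that \cite{SzV07} analyzes, so you are not avoiding that analysis, just reorganizing it.

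There are two concrete gaps. First, in your immediate-long-flight case, you assert that ``a careful comparison of the unstable/stable size scales $n^{-1/2}, n^{-2}$ and $m^{-2}, m^{-1/2}$ yields $c\sqrt n \le m \le Cn^2$,'' but the sizes of the two rectangles alone do not determine whether (or for which $m$) they can intersect; you also need their \emph{positions} relative to the common accumulation point $x_{h'}$ and the \emph{orientations} of their long and short axes. The content of the lemma is precisely that for $D^-_{h',n}$ the admissible $\cos\varphi$ values run through a range of order $[n^{-2}, n^{-1/2}]$, while $D^+_{h',m}$ concentrates around $\cos\varphi \approx 1/m$; matching these gives the two-sided bound, but that positional statement has to be proved, not read off from the size scales. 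Second, your treatment of the short-intermediate-flight case does not match the geometry. You claim $\cos\varphi(F(x)) \lesssim 1/n$, but the paper's own description of $D^-_{h,n}$ gives only $\cos\varphi(F(x)) \lesssim n^{-1/2}$ (it meets $\mathbb H_k$ for $|k| \gtrsim n^{1/4}$), and this weaker bound is exactly what makes the lower bound $\tau(F^2(x)) \gtrsim \sqrt{\tau(x)}$ tight rather than $\gtrsim \tau(x)$. More importantly, ``a single bounded bounce preserves near-parallelism'' is not the right mechanism: the collision at $F(x)$ need not be grazing (in the paper's case $Q \le R \le Q'$ it can be a fairly steep hit on the near scatterer), so the reflected velocity can turn through an $O(1)$ angle. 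What actually rescues the argument is that the subsequent short flight is forced to terminate in a \emph{grazing} collision at $F^2(x)$ on the opposite corridor boundary, and it is this second, grazing, collision that re-aligns the velocity with $v_H$ at angle in the window $[n^{-2}, n^{-1/2}]$. That two-step constraint is the point of the paper's definition of $Q$ (``first collides with $\mathcal B$ and then experiences a grazing collision on $\mathcal B'$''), and your one-bounce heuristic misses it.
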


We don't give a formal proof of Lemma \ref{lem:flightgrowth},
as the first case was proved in \cite[Proposition 9]{SzV07}
and the second case is similar. Instead, we give an explanation.
A long flight needs to happen in a corridor, e.g. in the northeast direction in a horizontal corridor
with an angle $\alpha \approx 1/\tau(x)$. Now let $P, P' \in \partial \mathcal D$ be
two consecutive points on the boundary of the corridor (see Figure \ref{figtype1})
with $P \in \partial \mathcal B$, $P' \in \partial \mathcal B'$
(here $\mathcal B = \mathcal B_i + m$ and 
$\mathcal B' = \mathcal B_i + m'$
for some $i=1,...,d$ and 
$m,m' \in \mathbb Z^2$).
The free flight of $x$
will intersect the line segment $(P, P')$. Let $Q,Q'$ be two more points on
this line segment with $P<Q<Q'<P'$,
where $X<Y$ means that $Y$ is to the right of $X$. 
Here $Q'$ is defined so that the line segment
with angle $\alpha$ through $Q'$ is tangent to $\mathcal B$ and $Q$ is defined by
the property that a billiard particle travelling 
with angle $\alpha$ through $Q$ first collides with $\mathcal B$ and then experiences a grazing
collision on $\mathcal B'$. 

Let $R$ denote the point where the free flight, starting from $x$, crosses $(P,P')$.
If $P\leq R<Q$, then 
$\tau(F(x)) \in 
[\tau(x), \Cr{c:conseclongflight} (\tau(x))^2]$.
If $Q\leq R \leq Q'$, then $\tau(F(x)) < \| m'-m \|$ and 
$\tau(F^2(x)) \in [\Cr{c:conseclongflight}^{-1}\sqrt{\tau(x)}, \Cr{c:conseclongflight} (\tau(x))^2]$. Finally, if $Q' < R < P'$, then
$\tau(F(x)) \in  [\Cr{c:conseclongflight}^{-1}\sqrt{\tau(x)},
\tau(x))$. Also note that the last case is the typical one in the sense that $\|P - Q'\| \approx 1/\tau(x)$.

Figure \ref{figtype1} represents the two cases
of Lemma \ref{lem:flightgrowth}. 
The dashed line is a reference trajectory through $Q'$.
The solid line is a trajectory with $Q' < R < P'$ and the dash-dotted line
is a trajectory with $Q<R<Q'$.
(Note that for the sake of legibility, this image doesn't exactly reflect how small the angles would be.)
\begin{center}
\begin{figure}
    \includegraphics[scale = 0.7]{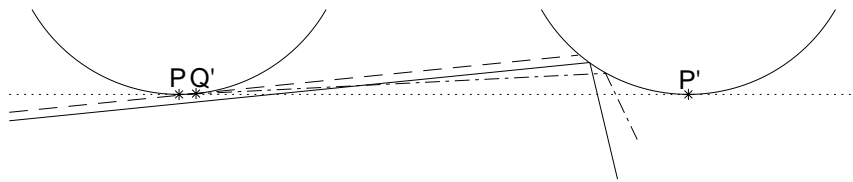}
\caption{Collisions after a long flight}\label{figtype1}
\end{figure}
\end{center}

Now fix some $\varepsilon_0 >0$, $n_0$ and $\bar \tau$ so that the following is true:
For any point $x=(r ,\varphi) \in \mathcal M$ 
that has a free flight longer than $\bar \tau$, there is some $h = 1,...,h_{\max}$ so that 
$d(x,x_h) < \varepsilon_0$ and $x \in D^+_{h, n}$ for some $n \geq n_0$. 
Furthermore, the trajectory of $x$ under the billiard flow until the
next collision avoids the $\varepsilon_0$ neighborhood of all corner points in $\mathbb T^2$.

Let us write $\mathcal M_{\mathfrak n} = 
\mathcal M \setminus \cup_{h = 1}^{h_{\max}}
\cup_{n \geq \mathfrak n} D^+_{h, n}
$ and $\mathcal M_{m,\mathfrak n} = \cap_{l  = 0}^{ m} F^{-l} \mathcal M_{\mathfrak n}$.
Note that for $m$ fixed and for $\mathfrak n$ large enough, we have
\begin{equation}
\label{eq:longflightiterated}
\mathcal M_{(\hat c \mathfrak n)^{1/(2^m)}} \subset
\mathcal M_{m,\mathfrak n} \subset \mathcal M_{\mathfrak n}.
\end{equation}
Indeed, the first inequality follows from
Lemma \ref{lem:flightgrowth} and the second one is trivial.

Following \cite{DST13}, we introduce the following definitions.
Let $W$ be a homogenenous unstable curve and $W_{i,m}$ the homogeneous components
(H-components)
of $F^m(W)$. We say that $W_{i,m}$ is $m$-regular if 
$F^{-l}(W_{l,m}) \in \mathbb H_0$
for all $0 \leq l < m$. 
If $W_{i,m}$ is not $m$-regular, it is called $m$-nearly grazing. 

Given some $\mathfrak n$, $m$ and $z \in \mathcal M_{m,\mathfrak n}$, we define 
$
\mathcal K_{m, \mathfrak n}^{reg} (z)
$
 as the number of connected components of 
$\mathcal M' \subset \mathcal M_{\mathfrak n} \setminus R_{0,m}^{\mathbb H}$
so that the closure of $\mathcal M'$ contains $z$ and some (and consequently all) points 
$x \in \mathcal M'$ satisfy
\begin{equation}
\label{eq:hath}
F^l(z) \in \hat{\mathbb H}_0 \text{ for all } l = 0,...,m,
\end{equation}
where
$$
\hat{\mathbb H}_0 := 
\mathbb H_0 \cup \mathbb H_{k_0} \cup \mathbb H_{-k_0}.
$$
Let 
$$
\mathcal K_{m}^{reg} = 
\sup_{\mathfrak n \geq 1}
\sup_{z \in \mathcal M_{m,\mathfrak n}}\mathcal K_{m, \mathfrak n}^{reg} (z)
$$

Now we have
\begin{lemma}
\label{lem:comp}
There is some $\Xi$ depending only on $\mathcal D$ (and in particular not
depending on $k_0$) so that 
$$
\mathcal K_{m}^{reg} 
=\sup_{\mathfrak n \geq 1}
\sup_{z \in \mathcal M_{\mathfrak n}}\mathcal K_{m, \mathfrak n}^{reg} (z)
\leq \Xi (m+1)
$$
\end{lemma}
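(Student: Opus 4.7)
The plan is to prove the bound by induction on $m$, with the base case relying on a one-step complexity estimate inside $\hat{\mathbb H}_0$ that is uniform in $k_0$. The essential point is that although the full singularity set $R_0^{\mathbb H}$ contains the infinite family of artificial lines $\hat S_{\pm k}$ for $k \geq k_0$, only finitely many of them meet $\hat{\mathbb H}_0$, and this number is $k_0$-independent.

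For the base case $m=0$, I would show that near any $z \in \hat{\mathbb H}_0 \cap \mathcal M_{\mathfrak n}$ the set $R_0^{\mathbb H}$ contributes only a bounded number of cutting curves in $\hat{\mathbb H}_0$: the grazing lines $\varphi = \pm \pi/2$ at the boundary of $\mathcal M$; vertical lines from $V_0$ at the finitely many nearby corner points (bounded by the combinatorial data of $\mathcal D$); and, from the artificial family, only $\hat S_{\pm k_0}$ separating $\mathbb H_0$ from $\mathbb H_{\pm k_0}$ together with $\hat S_{\pm(k_0+1)}$ bordering $\hat{\mathbb H}_0$ against the excluded strips. Every $\hat S_{\pm k}$ with $k > k_0+1$ lies outside $\hat{\mathbb H}_0$ by construction, which is the key reason why the resulting bound $\Xi$ can be chosen to depend only on $\mathcal D$ and not on $k_0$.

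For the inductive step I would use the recursion $R_{0,m}^{\mathbb H} = R_0^{\mathbb H} \cup F(R_{0,m-1}^{\mathbb H})$. A connected component $\mathcal M' \subset \mathcal M_{\mathfrak n} \setminus R_{0,m}^{\mathbb H}$ with $z$ in its closure and satisfying \eqref{eq:hath} corresponds, via $F$ applied locally to resolve the outermost $F$-iterated singularities, to a component of the analogous set at level $m-1$ near a phase-space iterate of $z$ that still lies in $\hat{\mathbb H}_0 \cap \mathcal M_{\mathfrak n'}$ for an appropriate $\mathfrak n'$, possibly subdivided near $z$ by at most $\Xi$ additional curves from the $R_0^{\mathbb H}$ summand (base case). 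Because incident smooth curves through a point subdivide the two-dimensional local picture additively, the inductive hypothesis $\mathcal K_{m-1}^{reg} \leq \Xi m$ yields $\mathcal K_m^{reg} \leq \Xi m + \Xi = \Xi(m+1)$. The equality of the two suprema in the lemma follows because any component $\mathcal M'$ satisfying \eqref{eq:hath} must contain interior points of $\mathcal M_{m,\mathfrak n}$: the regularity condition forbids the iterates of $x \in \mathcal M'$ from landing in $D^+_{h,n}$ for large $n$, since those regions lie outside $\hat{\mathbb H}_0$, so replacing $z$ by such an interior point reduces the second supremum to the first.

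The main obstacle is ensuring that $\Xi$ is truly $k_0$-independent as we iterate, since an image $F^l(\hat S_{\pm k})$ with $k \gg k_0$ could in principle snake back into $\hat{\mathbb H}_0$ and produce an uncontrolled cut near $z$. This is precisely what \eqref{eq:hath} is designed to exclude: such a cut would require some intermediate iterate of a point arbitrarily close to $z$ to lie in $\mathbb H_{\pm k}$ with $|k| > k_0$, contradicting the requirement that all iterates stay in $\hat{\mathbb H}_0$. Hence at each of the $m+1$ steps only the base-case family of curves effectively contributes, completing the argument.
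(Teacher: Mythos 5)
Your proposal attempts a self-contained induction, whereas the paper's proof is essentially a citation: it invokes \cite[Lemma 3.5]{DST13} (with $\tau_{\max}$ replaced by $\bar\tau$ in the underlying Lemma 3.6, giving $\Xi = 4\bar\tau/\tau_* + 6$) and then checks two things -- that the equality of suprema follows from \eqref{eq:longflightiterated}, and that for $\mathfrak n > \mathfrak n_0 = \max\{n_0, Ck_0^4\}$ the points $z \in \mathcal M_{m,\mathfrak n}\setminus\mathcal M_{m,\mathfrak n_0}$ contribute nothing because $F(z)$ exits $\hat{\mathbb H}_0$. Your handling of the equality of suprema and of the $k_0$-uniformity (the $\hat S_{\pm k}$ for $|k|>k_0+1$ and the $D^+_{h,n}$ for large $n$ lie outside $\hat{\mathbb H}_0$) is consistent with the paper's reasoning and is fine.

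The gap is in your inductive step. You claim $\mathcal K_m^{reg} \le \mathcal K_{m-1}^{reg} + \Xi$ by writing $R_{0,m}^{\mathbb H} = R_0^{\mathbb H} \cup F(R_{0,m-1}^{\mathbb H})$ and asserting that the $F$-iterated part contributes the level-$(m-1)$ count near ``a phase-space iterate of $z$'' while $R_0^{\mathbb H}$ contributes only $\Xi$ extra curves. This does not close: when $z$ lies on $R_0^{\mathbb H}$ (which is precisely where the interesting counting happens), the relevant preimage is multivalued -- each of the level-$1$ branches near $z$ has its own preimage, and each of those preimages can again sit near curves of $R_0^{\mathbb H}$. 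The recursion one gets naively is a product over branches, i.e.\ $\mathcal K_m^{reg} \lesssim \Xi\cdot\mathcal K_{m-1}^{reg}$, which is exponential in $m$. The statement ``incident smooth curves through a point subdivide the two-dimensional local picture additively'' is a fact about curves at a single point, but it does not tell you that the \emph{number} of new curves added at step $m$ is bounded by a constant independent of $m$; that is exactly what must be proved. In \cite{DST13} the linear bound is obtained by tracking \emph{branches} (continuations of the orbit at improper collisions), using that corner sequences have uniformly bounded length and that two improper collisions on the orbit are separated by time at least $\tau_*$; this is why the resulting constant $\Xi$ is $4\bar\tau/\tau_* + 6$, a dynamical quantity, rather than a static count of curves of $R_0^{\mathbb H}$ through a point. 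Your proposed $\Xi$ is the latter, not the former, and the argument that would connect them is precisely the missing content. Without engaging with the corner-sequence geometry and the free-flight separation $\tau_*$, the induction does not establish linear growth.
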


\begin{proof}
By \eqref{eq:longflightiterated}, the first equality follows. The inequality 
follows from \cite[Lemma 3.5]{DST13}. Although that lemma is only proved
in the case of finite horizon, the proof applies in our case as well. 
Let us fix $\mathfrak n_0 = \max \{ n_0, C k_0^4 \}$. Then the proof of \cite{DST13} 
implies
$\sup_{z \in \mathcal M_{\mathfrak n_0}}\mathcal K_{m, \mathfrak n_0}^{reg} (z)
\leq \Xi (m+1)$. We just need to replace $\tau_{\max}$
by $\bar \tau$ in Lemma 3.6; in particular $\Xi = 4 \bar \tau/ \tau_{*} +6$ works,
where $\tau_{*}$ is the length of the minimal free flight between two improper collisions 
(a geometric constant only depending on $\mathcal D$).
Indeed, whenever $\tau(x) > \bar \tau$ (this can happen if $\mathfrak n > n_0$), the trajectory
up to the next collision avoids the $\varepsilon_0$ neighborhood of the corner points by the choice of 
$n_0$, so Lemma 3.6 remains valid. 
Now if $\mathfrak n > \mathfrak n_0$, then by the choice of $\mathfrak n_0$
and by Lemma \ref{lem:flightgrowth}, all points 
$z \in \mathcal M_{m,\mathfrak n} \setminus \mathcal M_{m,\mathfrak n_0}$ satisfy
$F(z) \in {\mathbb H}_k$ for some $k$ with $|k| > k_0 $. Thus
$\mathcal K_{m, \mathfrak n}^{reg} (z) = 0$.

\end{proof}

Let $k_0$ be fixed and let $K_m^{reg}(W)$ denote the number of $m$-regular
H-components of $F^m(W)$. Then we have
\begin{lemma}
\label{lem:comp2}
There exists some $m_0$ only depending on $\mathcal D$ so that for any $k_0$,
$$
\lim_{\delta \rightarrow 0} \sup_{W: |W| < \delta} K_{m_0}^{reg}(W) 
< \frac13 C_\# \Lambda_*^{m_0}
$$
where $C_\#$ and $\Lambda_*$ are defined by 
\eqref{eq:hyper}.
\end{lemma}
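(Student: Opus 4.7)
The plan is to deduce the bound from Lemma \ref{lem:comp} combined with the uniform hyperbolicity estimate \eqref{eq:hyper}. The key observation is that Lemma \ref{lem:comp} furnishes a bound on $\mathcal K_{m_0}^{reg}$ that grows only linearly in $m_0$, with a constant $\Xi$ depending only on $\mathcal D$ and in particular not on $k_0$, whereas the target $\tfrac{1}{3} C_\# \Lambda_*^{m_0}$ grows exponentially. Hence, once I establish
\[
\lim_{\delta \to 0} \sup_{W: |W| < \delta} K_{m_0}^{reg}(W) \leq \mathcal K_{m_0}^{reg},
\]
picking $m_0$ so large that $\Xi(m_0 + 1) < \tfrac{1}{3} C_\# \Lambda_*^{m_0}$ (possible uniformly in $k_0$ since $\Lambda_* > 1$ and all constants depend only on $\mathcal D$) completes the proof.

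To prove the reduction, I would fix a homogeneous unstable curve $W$ with $|W| < \delta$ and a reference point $z$ in the closure of $F^{m_0}(W)$. By the iterated flight-growth estimate \eqref{eq:longflightiterated}, together with the fact that for any threshold $\mathfrak n$ the sets $D^+_{h,n}$ with $n < \mathfrak n$ are finite in number and of positive diameter, I can choose $\mathfrak n = \mathfrak n(\delta) \to \infty$ as $\delta \to 0$ so that $W \subset \mathcal M_{m_0, \mathfrak n}$ apart from at most one H-component wholly contained in a single $D^+_{h,n}$ with $n < \mathfrak n$, whose contribution to the count is bounded by a constant and is hence asymptotically negligible. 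Every remaining $m_0$-regular H-component $W_{i, m_0} \subset F^{m_0}(W)$ then sits inside a distinct connected component of $\mathcal M_{\mathfrak n} \setminus R_{0, m_0}^{\mathbb H}$ whose closure contains $z$; distinctness follows because H-components of $F^{m_0}(W)$ are separated by points of $W \cap R_{-m_0, 0}^{\mathbb H}$, which via $F^{m_0}$ correspond to the singularity curves in $R_{0, m_0}^{\mathbb H}$ near $z$ that bound these connected components. The $m_0$-regularity condition $F^{-l}(W_{i, m_0}) \subset \mathbb H_0$ for $0 \leq l < m_0$ implies the condition \eqref{eq:hath} in the definition of $\mathcal K_{m_0, \mathfrak n}^{reg}(z)$, because $\mathbb H_0 \subset \hat{\mathbb H}_0$. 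Thus $K_{m_0}^{reg}(W) \leq \mathcal K_{m_0, \mathfrak n}^{reg}(z) + O(1) \leq \mathcal K_{m_0}^{reg} + O(1)$, and passing to the limit $\delta \to 0$ yields the claim.

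The main obstacle lies in the bookkeeping in the second paragraph: one must rigorously match $m_0$-regular H-components of $F^{m_0}(W)$ to distinct connected components of $\mathcal M_{\mathfrak n} \setminus R_{0, m_0}^{\mathbb H}$ near $z$, and handle the possible exceptional component crossing a low-$n$ long-flight region. This requires carefully using the structure of the singularity set near the corridor boundary points described at the start of Section \ref{sec:thm1}, together with the finiteness of $\{D^+_{h,n}\}_{n < \mathfrak n}$, and is the step of the argument most likely to require delicate case analysis.
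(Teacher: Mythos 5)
Your high-level plan is correct and matches the paper's: choose $m_0$ so that $\Xi(m_0+1)<\tfrac13 C_\#\Lambda_*^{m_0}$ (possible since $\mathcal K_{m_0}^{reg}$ grows only linearly while the target grows exponentially, and $\Xi$ is independent of $k_0$), then reduce $K^{reg}_{m_0}(W)$ to $\mathcal K_{m_0}^{reg}$. However, the reduction step in your second paragraph has a genuine gap.

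The claim that one can choose $\mathfrak n(\delta)\to\infty$ so that ``$W\subset\mathcal M_{m_0,\mathfrak n}$ apart from at most one H-component wholly contained in a single $D^+_{h,n}$ with $n<\mathfrak n$'' does not hold, and it misidentifies which pieces are problematic. If $W$ comes near a corridor boundary point $x_h$, then for \emph{every} $n$ above some threshold the curve $W$ (or one of its forward iterates) intersects $D^+_{h,n}$, so the set $W\setminus\mathcal M_{m_0,\mathfrak n}$ contains \emph{infinitely many} pieces, one for each sufficiently large $n\geq\mathfrak n$; there is no single exceptional piece, and the pieces you do single out, those in $D^+_{h,n}$ with $n<\mathfrak n$, are already in $\mathcal M_{\mathfrak n}$ and hence not the issue. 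The paper resolves this differently: it fixes $n_1>\max\{n_0,k_0^4\}$ (depending on $k_0$ but not on $\delta$) and observes, via Lemma~\ref{lem:flightgrowth}, that a long flight ($z\notin\mathcal M_{m_0,n_1}$) forces $F(z)\in\mathbb H_k$ with $|k|>k_0$, which makes the corresponding H-component of $F^{m_0}(W)$ $m_0$-nearly grazing, so it is \emph{automatically excluded} from $K^{reg}_{m_0}(W)$. Your argument never invokes this mechanism, and without it the infinitely many long-flight pieces are not controlled.

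A second, smaller gap: after discarding long flights, the paper still needs to show $K^{reg}_{m_0}(W)\leq\mathcal K_{m_0}^{reg}$, which it does by noting that there are only finitely many points $z$ with $\mathcal K_{m_0,n_1}^{reg}(z)>2$, then taking $\delta$ small enough that $W$ is near at most one such point and invoking transversality together with the use of $\hat{\mathbb H}_0$ in \eqref{eq:hath}. Your appeal to ``distinctness of connected components'' gestures at this but does not supply the finiteness and localization arguments. Finally, your conclusion $K^{reg}_{m_0}(W)\leq\mathcal K_{m_0}^{reg}+O(1)$ followed by ``passing to the limit'' does not by itself remove the additive constant; this is harmless (one can absorb it by enlarging $m_0$), but as stated it is not justified.
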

 
\begin{proof}
This lemma is proved as \cite[Lemma 2.12]{DST13}. 
We fix $m_0$ so that $\Xi (m_0 + 1) < \frac13 C_\# \Lambda_*^{m_0}$.
Now since $k_0$ is fixed, we can choose
$n_1 > \max \{ n_0, k_0^4 \}$. Then as in Lemma \ref{lem:comp}, if 
$z \notin \mathcal M_{m_0,n_1}$, then $F(z) \in \mathbb H_k$ for $|k| > k_0$, so
the H-component $W' \subset F^{m_0}(W)$ containing $F^{m_0}(z)$ must be $m_0$-nearly
grazing. 
Once $n_1$ is fixed, there are only finitely many points $\mathcal Z = \{ z_1,...,z_Z \}$ where 
$\mathcal K_{m_0, n_1}^{reg}(z) > 2$.
By choosing $\delta = \delta(n_1)$ small, we can assume that our curve $W$ is only close to one of these
points and so by transversality and by the fact that we used $\hat{\mathbb H}_0$ in \eqref{eq:hath},
we find $K_{m_0}^{reg}(W) \leq \mathcal K_{m_0}^{reg}$, which by Lemma
\ref{lem:comp}
completes the proof.
\end{proof}

Next, we bound the contribution of nearly grazing components for $m=1$.

\begin{lemma}
\label{lem:grazing}
For any $\varepsilon >0$ there exists $k_0$ so that 
$$
\lim_{\delta \rightarrow 0} \sup_{W: |W|< \delta} \sum^*_i \frac{1}{\Lambda_{i,1}} < \varepsilon,
$$
where $ \sum^*$ means that the sum is restricted to nearly grazing H-components $W_{i,1}$.
\end{lemma}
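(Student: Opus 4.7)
The plan is to split the nearly grazing H-components $W_{i,1} \subset \mathbb{H}_k$ (with $|k| \geq k_0$) into two types: (a) those whose preimage $F^{-1}(W_{i,1})$ is not contained in any deep corridor set $D^+_{h,n}$ (for $n$ large), and (b) those whose preimage does lie in some $D^+_{h,n}$. For each type I use the appropriate expansion rate recalled at the start of the section: $\Lambda_{i,1} \geq c k^2$ for type (a), and the enhanced bound $\Lambda_{i,1} \geq c n k^2$ for type (b). Throughout I will also use that $D^-_{h,n}$ has $\varphi$-width $\approx n^{-1/2}$ and only intersects strips $\mathbb{H}_k$ with $|k| \approx n^{1/4}$.

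For type (a), set the threshold $n_1 := C k_0^4$, chosen so that $D^-_{h',n}$ meets a strip $\mathbb{H}_k$ with $|k| \geq k_0$ only when $n \geq n_1$. The curve $W$ is cut by the finitely many singularities in $S_0 \cup V_0$ together with the boundaries of the finitely many sets $D^+_{h,n}$ with $n < n_1$, so it decomposes into at most $C_1 = C_1(k_0)$ pieces $W_i$. For each such piece, Lemma \ref{lemma:upperbdlen} and the unstable cone bound imply $F(W_i)$ has vertical extent $\leq C \delta^{1/3}$. Choosing $\delta$ small (depending on $k_0$) forces each $F(W_i)$ to meet at most two strips $\mathbb{H}_k$ with $|k| \geq k_0$, so
$$
\sum^{(a)}_i \frac{1}{\Lambda_{i,1}} \leq \frac{C}{k_0^2}.
$$

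For type (b), for each pair $(h,n)$ with $n \geq n_1$ and $W \cap D^+_{h,n} \neq \emptyset$, the image $F(W \cap D^+_{h,n}) \subset D^-_{h',n}$ spans $\varphi$-width $\leq C n^{-1/2}$ across strips with $|k| \approx n^{1/4}$. It therefore splits into at most $C n^{1/4}$ H-components, each sitting in a strip with $|k| \approx n^{1/4}$ and each having $\Lambda_{i,1} \geq c n \cdot (n^{1/4})^2 = c n^{3/2}$. Summing over such $(h,n)$ (the number of relevant $h$ is bounded independently of $W$),
$$
\sum^{(b)}_i \frac{1}{\Lambda_{i,1}} \leq C \sum_{n \geq n_1} n^{1/4} \cdot n^{-3/2} = C \sum_{n \geq n_1} n^{-5/4} \leq \frac{C'}{k_0}.
$$
Combining the two bounds gives a total $\leq C''/k_0$, which is $< \varepsilon$ for $k_0$ large enough.

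The step I expect to be the main obstacle is carefully verifying the counting of H-components per pair $(h,n)$ in type (b), in particular that the cutting of $F(W \cap D^+_{h,n})$ by the homogeneity strip boundaries really yields at most $O(n^{1/4})$ pieces uniformly in $W$, and that the enhanced expansion $\Lambda_{i,1} \approx nk^2$ is correctly applied with the matching range $|k| \approx n^{1/4}$. These are geometric consequences of the singularity structure already summarized at the start of the section, but some care is needed to make the bounds uniform over all small $W$.
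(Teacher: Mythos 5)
Your decomposition into type~(a) (preimage outside the deep corridor sets) and type~(b) (preimage in some $D^+_{h,n}$ with $n$ large) is essentially the same as the paper's split $W = W_1 \sqcup W_2$ along the set $\mathcal M_{n_0}$, and your type~(b) estimate is a plausible fleshing-out of the paper's appeal to \cite[Remark 5.59]{CM06}: per pair $(h,n)$ you get $O(n^{1/4})$ H-components with expansion $\gtrsim n^{3/2}$, giving $\sum_{n\geq n_1} n^{-5/4} \lesssim n_1^{-1/4} \lesssim k_0^{-1}$, which is of the right order.

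The type~(a) argument, however, contains a genuine gap. First, Lemma~\ref{lemma:upperbdlen} bounds the length of a \emph{homogeneous} connected component of $F(W)$; it does not apply to $F(W_i)$ when $W_i$ is a piece cut only by primary singularities and $\partial D^+_{h,n}$-boundaries, since such an image need not be homogeneous. More importantly, the claim that $F(W_i)$ meets at most two strips $\mathbb H_k$ with $|k| \geq k_0$ is false, and no amount of shrinking $\delta$ fixes it: whenever $W_i$ has an endpoint on a branch of $F^{-1}(S_0)$ (which happens for pieces adjacent to a crossing of $W$ with a primary singularity preimage), the curve $F(W_i)$ has an endpoint on the grazing line $\varphi = \pm\pi/2$, so it crosses \emph{all} strips $\mathbb H_k$ for $k \geq k_{\min}(W_i)$, i.e.\ infinitely many of them; the strips are ever thinner near grazing so ``short'' does not mean ``few strips''. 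The correct treatment (which is what the paper does) is simply to sum over all strips: each primary piece contributes at most $\sum_{|k|\geq k_0} C k^{-2} \leq C k_0^{-1}$ because the expansion factor for the sub-piece landing in $\mathbb H_k$ is $\gtrsim k^2$, and there are at most $L = \bar\tau/\tau_{\min} + 2$ primary pieces once $\delta$ is small; this gives $S^{*(a)} \leq CL k_0^{-1}$, which still tends to $0$ as $k_0 \to \infty$. So the final conclusion is unaffected, but the intermediate ``two strips'' claim must be replaced by this summation over the secondary singularity strips.
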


\begin{proof}
This lemma is analogous to \cite[Lemma 2.13]{DST13} but the proof differs substantially
as the free flight now is unbounded. However, we can use \cite[Remark 5.59]{CM06}.

Let us write $W_1 = W \setminus \mathcal M_{n_0}$ and $W_2 = W \cap \mathcal M_{n_0}$.
Let $S^{*j}$ be the sum corresponding to the images of $W_j$ for $j=1,2$.
As before, for any $k_0$ there exists some $\delta>0$ so that if $|W| < \delta$, then 
$F(W_1)$ have at most $ L = \bar \tau/ \tau_{\min} + 2$ connected components. 
Each of these components could be further cut by secondary singularities, so
$S^{*1} \leq L \sum_{k \geq k_0} Ck^{-2}\leq CL k_0^{-1}$ which is less then $\varepsilon /2$
assuming that $k_0 > 2 CL/\varepsilon$.
For any fixed $k_0$ and $n_0$, we can make $S^{*2} \leq \varepsilon /2$
by further reducing $\delta$ if needed, exactly as in \cite[Remark 5.59]{CM06}.
\end{proof}

Now Theorem \ref{thm1} follows from Lemmas 
\ref{lem:comp},
\ref{lem:comp2} and 
\ref{lem:grazing}
as in \cite{DST13}. Note that there is a typo at the middle of page 1231 in \cite{DST13}
as one only has 
\begin{equation}
\label{eq:subadd}
\mathcal L_{n+m}(W) \leq \sum_i 
\frac{1}{\Lambda_{i,n}}\mathcal L_m(W_{i,n}),
\end{equation}
where 
\begin{equation}
\label{eq:sumexp}
\mathcal L_n(W) = \sum_i \frac{1}{\Lambda_{i,n}}
\end{equation}
 (and the equation in \eqref{eq:subadd} may not hold), but this is enough
since Theorem \ref{thm1} only gives an upper bound.


\section{Proof of Theorem \ref{thm2}}
\label{sec:thm2}

By (A1) and (A2),
we can group the corridors into three categories: 
$H \in \mathcal H_1$ if $H$ is bounded by two regular points,
$H \in \mathcal H_2$ if $H$ is bounded by a regular point and a corner point and
$H \in \mathcal H_3$ if $H$ is bounded by two corner points.
We say that the corridor is of type 
$1,2,3$, respectively. We also say that a boundary point $x \in A_H$ with $H \in \mathcal H_j$ is of type $j$. 

The reason we assume (A1) and (A2) is to guarantee that there are only 3 types here. Without these assumptions, 
there would be many more types (see \cite{BSCh90} for a description of all
types in general).

Let us fix an enumeration of the set $\cup_H A_H$ as $\{ x_1,...,x_{h_{\max}}\}$. Note that it is possible
that $\Pi_{\mathcal D}(x_h) = \Pi_{\mathcal D}(x_{h'})$ for some $h \in H$, $h' \in H'$, $H \neq H'$ in case
$\Pi_{\mathcal D}(x_h)$ is a corner point. 

Recall that in case of type 1 corridors
$$A_H = \{ (r_{H,r}, \pi/2), (r_{H,r}, -\pi/2), (r_{H,l}, \pi/2), (r_{H,l}, -\pi/2)\}.$$ 
Also recall the notation introduced in Section \ref{sec:thm1}:
for any point $x_h \in A_H$ for some $H \in \mathcal H_1$, we denote by 
$D^{\pm} (h,n)$ the domains where $F^{\pm}$ is smooth in a neighborhood of $x_h$. 
We also note that $F(D^+(h,n)) = D^-(h',n)$
with
$x_h = (r_{H,l/r}, \pm \pi/2)$ and $x_{h'} = (r_{H,r/l}, \mp \pi/2)$.

If $H \in \mathcal H_3$, then $A_H$ is of the form \eqref{eq:horizonbd}. To emphasize the difference from the previous case,
we will denote by $E^+(h,n)$ the set of points $x \in \mathcal M$ in a vicinity of $x_h$ so that the free flight of $x$ passes by $n$
copies of the scatterer before colliding on the other side of the corridor whenever 
$x_h$ is of type 3. A simple geometric
argument shows that $E^+(h,n)$ is of size $\approx n^{-2}$ in the unstable direction and $\approx n^{-1}$ in the stable direction
(see \cite[Section 4]{BSCh90}, and Figure \ref{fig2}). 
The image of $E^+(h,n)$ is in a small neighborhood of a point $(q,-v)$, where
$(q,v) \in A_H$.

Type 2 corridors will require special consideration. Let $x_h = (r_h, \varphi_h) \in A_H$ with $H \in \mathcal H_2$. If $r_h$ corresponds to a 
regular point, then we define $D^{\pm}(h,n)$ with the same asymptotic size as in case $\mathcal H_1$ and
if $r_h$ corresponds to a corner point, then we define $E^+(h,n)$ with the same asymptotic size as in case $\mathcal H_3$.
\begin{center}
\begin{figure}
\includegraphics[scale = 0.5]{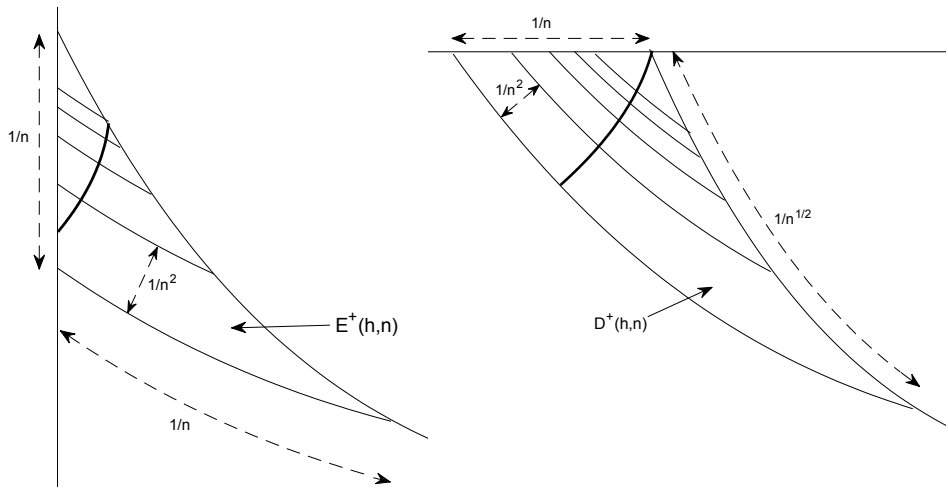}
\caption{Singularity structure near type 3 and type 1 boundary points. 
An unstable curve is indicated with bold on both panels.} \label{fig2}
\end{figure}
\end{center}
Recall that 
\begin{equation}
\label{def:A'}
A = \cup_H A_H \text{ and } A' = \{ x_h \in A: \Pi_{\mathcal D}x_h \text{ is a corner point }\} 
\end{equation}
Let us write 
\begin{equation}
\label{eq:defE}
\mathcal E(h,n) = \mathcal E(x_h,n)
= \cup_{N \geq n} E^+(h,N), \quad 
\mathcal E_n = \cup_{ h: x_h \in A' } \mathcal E(h,n),
\end{equation} 
\begin{align}
&\mathcal B(h,n) = \mathcal B(x_h,n)
= \cup_{N \geq n} D^+(h,N), \nonumber \\
&
 \label{eq:defB}
\mathcal B_n = \cup_{ h: x_h \text{regular boundary point of a type 2 corridor}} \mathcal B(h,n).
\end{align}
and finally
$$
\mathcal E_{n_1} \mathcal B_{n_2} = \mathcal E_{n_1} \cup \mathcal B_{n_2}
$$

That is, 
$\mathcal E_{n_1} \mathcal B_{n_2}$ is the set of points that experience a long free flight in type 2 or type 3 corridors.
This set is the disjoint union of $\mathcal E_{n_1}$ and $\mathcal B_{n_2}$,
where $\mathcal E_{n_1}$ is contained in a neighborhood of $A'$ and $\mathcal B_{n_2}$
is contained in a neighborhood of $A \setminus A'$. Note that when there are no type 2 corridors
we have 
$\mathcal B_{n_2} = \emptyset$. In this case, the forthcoming proof could be 
simplified substantially.

Let us fix a large integer $\Cl[Ns]{N:N_0}$ so that the sets $D^+(h,n)$ and $E^+(h',n')$
are disjoint whenever $n, n' \geq \Cr{N:N_0}$, $h \neq h'$.
We start with a geometric lemma.

\begin{lemma}
\label{lem:geomcornercorr}
There is a constant $\Cl{c:star}$ so that 
for every unstable curve 
$W$ with $W \subset \mathcal E_{\Cr{N:N_0}}$,
there is some $T=T(W)$
so that for all $x \in W$,
$T \leq \tau(x) \leq \Cr{c:star} T$ holds.
\end{lemma}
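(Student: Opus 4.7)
The plan is to reduce, via the standard asymptotic $\tau(x) = c_H n + O(1)$ on $E^+(h,n)$, the conclusion to a uniform bound on the ratio of the indices $n$ appearing along $W$, and then to establish that ratio bound by local geometric analysis near the corner boundary point $x_h$.

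First, I would argue localization. The constant $\Cr{N:N_0}$ has been fixed so that the cells $D^+(h,n)$ and $E^+(h',n')$ with $h \neq h'$ and $n,n' \geq \Cr{N:N_0}$ are pairwise disjoint, which implies the same disjointness for the fans $\mathcal E(h,\Cr{N:N_0})$ over $h$ with $x_h \in A'$. Since $W$ is a connected curve in the union $\mathcal E_{\Cr{N:N_0}}$, there is a unique $h \in A'$ with $W \subset \mathcal E(h,\Cr{N:N_0})$. I would then invoke the standard asymptotic relation that for $x \in E^+(h,n)$, $\tau(x) = c_H n + O(1)$ where $c_H$ is a corridor-dependent constant (cf.\ the geometric discussion preceding Lemma \ref{lem:flightgrowth} and \cite[Section 4]{BSCh90}). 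Setting
\[
n_{\min}(W) := \min\{n : W \cap E^+(h,n) \neq \emptyset\}, \qquad n_{\max}(W) := \max\{n : W \cap E^+(h,n) \neq \emptyset\},
\]
and $T := c_H\, n_{\min}(W)$, the lemma reduces to proving a uniform bound $n_{\max}(W) \leq C\, n_{\min}(W)$.

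Second, I would carry out the geometric argument for this ratio bound. Introducing coordinates $(u,w)$ near $x_h = (0,0)$ adapted to the unstable and stable cones, the description recalled from \cite[Section 4]{BSCh90} gives that each $E^+(h,n)$ is approximately a parallelogram of unstable width $\asymp n^{-2}$ and stable length $\asymp n^{-1}$, sitting at unstable ``distance'' $\asymp n^{-1}$ from $x_h$, with adjacent cells separated by boundary curves approximately in the stable direction. Because $W$ is connected, the index set $\{n : W \cap E^+(h,n) \neq \emptyset\}$ is the full integer interval $[n_{\min}(W), n_{\max}(W)]$, so traversing the cascade of cells along $W$ requires unstable extent at least $\asymp \sum_{n=n_{\min}}^{n_{\max}} n^{-2} \asymp n_{\min}^{-1} - n_{\max}^{-1}$; simultaneously the stable coordinate of $W$ must satisfy $|w| \lesssim n^{-1}$ at every cell it meets. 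Combining these competing constraints with the transversality of unstable and stable cones (minimum angle $\gamma > 0$) and the bounded slope of unstable curves would yield the desired bound.

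The main obstacle I anticipate is this geometric step. Tracking simultaneously the unstable and stable motion of $W$ across many cells near the concave corner $x_h$ is more delicate than the grazing (type 1) analogue treated in \cite[Chapter 4]{CM06}, because $x_h$ is a multi-valued singularity of $F$ at which the local geometry is governed by the corner angle rather than a single tangent line. If a direct coordinate argument proves too intricate, a useful alternative is to apply $F$ once and argue in a neighborhood of the dual corner $x_{h'}$, where $F(E^+(h,n))$ has swapped dimensions ($\asymp n^{-1}$ unstable $\times\, \asymp n^{-2}$ stable), and then transport the conclusion back by the standard distortion bounds. Once the ratio bound $n_{\max}(W)/n_{\min}(W) \leq C$ is in hand, choosing $\Cr{c:star}$ slightly larger than $C$ to absorb the $O(1)$ error in $\tau = c_H n + O(1)$ completes the proof.
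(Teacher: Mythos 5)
Your localization step and the reformulation of the goal as a cell-index ratio bound $n_{\max}(W)/n_{\min}(W) \leq C$ are both sound, and this is in fact the form in which the lemma is invoked later in the paper (cf.\ the identity $F(W) = \cup_{n = n_W}^{C n_W} W_n$ in the proof of the lemma following this one). However, the geometric step you propose to establish the ratio bound is where your proposal has a genuine gap, and the paper's argument avoids it by going a more direct route.

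The paper's proof extends $W$ in both directions so that one endpoint sits at $(r_h, \varphi_0)$ on the vertical boundary $r = r_h$ of $\mathcal M_{i,j}$ (with $\varphi_0 < \varphi_h$ forced by the northeast orientation of unstable curves), and the other at $(r', \varphi') \in \partial \mathcal E_{N_0}$, on the stable curve of trajectories whose next collision is at the same corner point. Setting $T := \tau(r_h, \varphi_0)$, one has the elementary relation $\varphi_h - \varphi_0 \approx 1/T$. The triangle with vertices $x_h = (r_h, \varphi_h)$, $(r_h, \varphi_0)$, $(r', \varphi')$ then has one vertical side, one unstable side (along $W$), and one stable side (along the fan's boundary); since neither cone contains the vertical direction and the cones are transversal, all three angles are bounded away from $0$, so all three sides are comparable to $\varphi_h - \varphi_0 \approx 1/T$. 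In particular $(r', \varphi')$ is at distance $\approx 1/T$ from $x_h$ along the stable boundary curve, which gives $\tau(r', \varphi') \leq CT$. No cell-by-cell accounting is needed.

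By contrast, the two constraints you extract --- unstable extent of $W$ at least $\asymp n_{\min}^{-1} - n_{\max}^{-1}$, and stable coordinate $\lesssim n^{-1}$ in each visited cell --- do \emph{not} by themselves force $n_{\max}/n_{\min}$ to be bounded, even assuming bounded slope and transversality. Indeed, for $n_{\max} \gg n_{\min}$ these read ``unstable extent $\approx n_{\min}^{-1}$'' and ``stable extent $\lesssim n_{\min}^{-1}$'', which are perfectly compatible with a curve of bounded slope. The missing input is precisely the pair of \emph{specific} boundary curves through $x_h$ that the paper exploits: the vertical line $r = r_h$ and the stable curve of grazing trajectories at the corner. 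The angle between these two directions at $x_h$ is bounded away from $0$, which is what forces the two endpoints of $W$ to lie at comparable distances from $x_h$. Without this wedge geometry, ``combining the competing constraints'' does not close. Your fallback suggestion (push forward one step to the dual corner $x_{h'}$ and use swapped cell dimensions) is a reasonable idea but is also left unexecuted, and in any case would still require the same kind of wedge argument at $x_{h'}$.
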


\begin{proof}
For every $h$ with $x_h = (r_h, \varphi_h) \in A'$, we
will show that the desired $\Cr{c:star}$ exists for unstable
curves in $\mathcal E(x_h, {\Cr{N:N_0}})$. 
This is sufficient as there are finitely
many corridors and we can take the biggest $\Cr{c:star}$.
Let us assume that $r_h$ is the left endpoint of the corresponding
boundary curve $\Gamma$ (the other case is similar).  

Without loss of generality we can assume that the endpoint of $W$
is a point $(r_h, \varphi_0)$. Indeed, if the curve does not stretch
all the way to the left boundary of $\mathcal M$, we can smoothly
extend it to the "southwest". 
A key observation is that $\varphi_0 < \varphi_h$. Indeed,
if $\varphi_0 \geq \varphi_h$, then since unstable curves are in
the "northeast direction", $W$ could not intersect 
$\mathcal E_{\Cr{N:N_0}}$. Now we choose 
$T=T(W) = \tau(r_h, \varphi_0)$. 
We can assume that the other edpoint
of $W$ is
$(r',\varphi') \in \partial \mathcal E_{\Cr{N:N_0}}$ so that the next collision
after leaving $(r',\varphi')$ is at the same corner point 
(and so is improper). Strictly speaking, $(r',\varphi')$ is not contained
in $W$ as $(r',\varphi') \notin \mathcal E_{\Cr{N:N_0}}$, that is,
$W$ is a curve that does not contain one of its endpoints.
Indeed, if $W$ does not fully
cross $\mathcal E_{\Cr{N:N_0}}$,
we can extend it smoothly to the northeast. 
By transverality, the triangle with vertices $(r_h, \varphi_h)$,
$(r_h, \varphi_0)$ and $(r',\varphi')$ has angles that are bounded
away from zero (specifically by an angle $\gamma$ as discussed
in Section \ref{sec:def}). 
Consequently, the distance between $(r_h, \varphi_h)$,
and $(r',\varphi')$ is bounded below by a universal constant
times $\varphi_h - \varphi_0$.
We also know that $\varphi_h - \varphi_0 \approx 1/T$,
whence
$\tau(r',\varphi') < \Cr{c:star} T$ for a geometric constant $\Cr{c:star}$. The lemma follows.
\end{proof}

Figure \ref{fig2} shows unstable curves (indicated with bold)
with long flight in corridors of type 1 and 3. 
In case of type 1 corridors, the free flight function restricted to an ustable curve
$W$ may be unbounded (see the right side of figure), but 
any long flight is necessarily followed by a nearly grazing collision.
This nearly grazing collision makes it easier to control the sum of expansion factors, 
as in the proof of 
Theorem \ref{thm1}. 
As seen 
on the left side of the figure and proven by Lemma \ref{lem:geomcornercorr}, 
for any unstable curve $W$, the free flight function restricted to $W$
is bounded in type 3 corridors (with a bound depending on $W$). We will leverage this fact
in Lemma \ref{lem:expansion0} to show that such a flight can only
increases the $\mathcal Z_1$ function
of a standard family by a bounded factor. 

Most work is required in case of type 2 corridors.
In this case, given an unstable curve near the regular boundary point
(as on the right panel of Figure \ref{fig2}) the free flight is unbounded
and after the collision, the expansion is {\it not} large. In particular, the sum in
\eqref{eq:sumexp} is infinite. To overcome
this difficulty, we prove in Lemma \ref{lem:expansion} that the $\mathcal Z_q$ function 
remains finite for any $q<1$. Then, we show that
multiple visits into corridors of type
2 or 3 in a short succession are not possible (Lemma \ref{lem:nonper}).
 
\begin{lemma}
\label{lem:expansion0}
There is some integer $\Cl[Ns]{n:type3} \geq \Cr{N:N_0}$ and a constant
$\Cl{C:type3factor}$ so that for every 
standard pair $\ell = (W, \rho)$ with 
$W \subset \mathcal E_{\Cr{n:type3}}$,
\begin{equation}
\label{eq:type3longest}
\mathcal Z_1(F_* (\ell)) \leq \Cr{C:type3factor} \mathcal Z_1 (\ell).
\end{equation}
\end{lemma}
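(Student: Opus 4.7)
The strategy is to leverage Lemma \ref{lem:geomcornercorr}, which confines the free flight time on $W$ to a window $[T,\Cr{c:star}T]$ of bounded multiplicative width, and hence restricts $W$ to a bounded-ratio family of the cells $E^+(h,n)$. This is the essential structural advantage of type 3 over type 1 corridors: it prevents the logarithmic blow-up of $\mathcal Z_1$ that would otherwise come from summing $1/n$ over a long range of $n$.

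After possibly enlarging $\Cr{n:type3}$ beyond $\Cr{N:N_0}$, we may assume that the sets $\mathcal E(h,\Cr{n:type3})$ are pairwise disjoint over $\{h: x_h \in A'\}$ and that for $n \geq \Cr{n:type3}$ the post-collisional angle after traversing the type-3 corridor from $E^+(h,n)$ is uniformly bounded away from $\pm\pi/2$ (using (A2)). By connectedness, $W \subset \mathcal E(h,\Cr{n:type3})$ for a single $h$, and Lemma \ref{lem:geomcornercorr} produces integers $n_1 \leq n_2$ with $n_2/n_1 \leq \Cl{c:ratio}$ such that $W \subset \bigcup_{n=n_1}^{n_2} E^+(h,n)$. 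Decompose $W = \bigsqcup_n W_n$ with $W_n = W \cap E^+(h,n)$; the transversal geometry of $E^+(h,n)$ gives $|W_n| \leq Cn^{-2}$. Because $F$ is smooth on $E^+(h,n)$, the free flight is $\approx n$, and the image collision angle is bounded away from $\pm \pi/2$, the standard billiard expansion formula yields $|F(W_n)| \approx n|W_n|$; moreover each $F(W_n)$ is a homogeneous unstable curve in $\mathbb H_0$. By \cite[Proposition 4.9]{ChD09} the pushforward $F_*(\ell)$ is a weighted family of standard pairs $(F(W_n),\tilde\rho_n)$ with weights $c_n \approx |W_n|/|W|$.

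The central calculation is then
\begin{align*}
\nu_{F_*\ell}\bigl(r_{F_*\ell} < \varepsilon\bigr)
&\leq \sum_{n=n_1}^{n_2} c_n \cdot \frac{\min\bigl(2\varepsilon,\, |F(W_n)|\bigr)}{|F(W_n)|}
\leq \frac{C}{|W|}\sum_{n=n_1}^{n_2}\min\!\left(\frac{2\varepsilon}{n},\, |W_n|\right)\\
&\leq \frac{2C\varepsilon}{|W|}\sum_{n=n_1}^{n_2}\frac{1}{n}
\leq \frac{C'\varepsilon\log(n_2/n_1)}{|W|}
\leq \frac{C''\varepsilon}{|W|},
\end{align*}
using $\log(n_2/n_1) \leq \log\Cr{c:ratio}$. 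Dividing by $\varepsilon$, taking the supremum, and applying \eqref{eq:Zas} to $\ell$ yields $\mathcal Z_1(F_*\ell) \leq C''/|W| \leq \Cr{C:type3factor}\mathcal Z_1(\ell)$.

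The main obstacle is verifying $|F(W_n)| \approx n|W_n|$ with an implicit constant uniform in $n$ and in the corner point $x_h$: this requires (A2) so that $\cos\varphi'$ at the terminal collision is bounded below (preventing a $1/\cos\varphi'$ factor from spoiling the expansion), and it requires $\Cr{n:type3}$ to be large enough that $F(E^+(h,n)) \subset \mathbb H_0$, so each $F(W_n)$ is a single homogeneous standard-pair piece and no secondary singularity strip further subdivides it. Beyond these essentially geometric checks, the novel content is exactly the bounded-ratio conclusion of Lemma \ref{lem:geomcornercorr}, which collapses $\sum 1/n$ to $O(1)$.
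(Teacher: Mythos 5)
Your argument for the type-3 case is correct and closely parallels the paper's: the paper likewise decomposes $W$ into the pieces $V_n = W\cap E^+(x_h,n)$, uses Lemma~\ref{lem:geomcornercorr} to restrict to a window $n_W \le n \le \Cr{c:star}n_W$, uses (A2) to bound $\cos\varphi'$ below, obtains expansion $\approx n$, and bounds the resulting logarithmic sum $\sum 1/n$ by $2\ln\Cr{c:star}$. (The paper phrases it via $\mathcal L_1(W)=\sum 1/\Lambda_{n,1}$ and then passes to $\mathcal Z_1$; your direct computation of $\nu_{F_*\ell}(r_{F_*\ell}<\varepsilon)$ is an equivalent packaging.)

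However, there is a genuine gap: you treat $\mathcal E_{\Cr{n:type3}}$ as if it consisted only of neighborhoods of type-3 boundary points, but by \eqref{eq:defE} it is indexed by \emph{all} $h$ with $x_h\in A'$, which includes the corner boundary point of every type-2 corridor. Your two key structural facts both fail in that case. First, the particle leaving $E^+(h,n)$ for $x_h$ a type-2 corner boundary point lands near the \emph{regular} boundary point on the other side of the corridor, where the collision angle approaches $\pm\pi/2$; (A2) only protects corner boundary points, so $\cos\varphi'$ is not bounded below and no enlargement of $\Cr{n:type3}$ can force $F(E^+(h,n))\subset\mathbb H_0$. Second, the claim that each $F(W_n)$ is a single homogeneous standard-pair piece is false here: the image is sliced by infinitely many secondary singularity strips $\hat S_{\pm k}$, and $F_*(\ell)$ has infinitely many components per $n$, so the clean estimate $|F(W_n)|\approx n|W_n|$ with a single piece does not apply. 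This requires a separate argument, as the paper supplies, exploiting that the expansion on the $k$-th homogeneity strip is $\approx nk^2$ so that the doubly-indexed sum is still summable (cf.\ \cite[Remark~5.59]{CM06}). As written, your proof establishes the lemma only when there are no type-2 corridors.
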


\begin{proof}
Let $x_h = (r_h, \varphi_h)$ and $\ell = (W, \rho)$ be 
such that $W \subset \mathcal E(x_h, \Cr{n:type3})$ for some $\Cr{n:type3}$
to be specified later. Assume first
that $x_h$ is a type 3 boundary point.
Let us write $V_n = W \cap  E^+({x_h,n})$ 
and $W_n = F(V_n)$ for $n \geq \Cr{n:type3}$.
Next we claim that
there is a constant $\beta >0$ only depending on $\mathcal D$ such that
for all $(r, \varphi) \in W$ and with the notation $F(r,\varphi) = (r', \varphi')$, we have $\cos \varphi' \geq \beta$. 
Indeed, let $\beta =\alpha/2$, where $\alpha$ is the minimal angle between the half tangents of the boundary
points bounding the corridors and the directions of the corresponding corridor $v_H$ 
($\alpha >0$ by assumption (A2)). Then by choosing $\Cr{n:type3}$
sufficiently large, we can guarantee that the angle between
the line segment emanating from $(r,\varphi)$ and
$v_H$
is less than $\alpha/2$, which implies the claim.
Without loss of generality we can assume that $k_0 > \beta^{-1}$ and
so for all $n \geq \Cr{n:type3}$,
$W_n$ is a homogeneous unstable curve and the expansion of $F$ on $V_n$ is $\approx n$.
Furthermore, a simple geometric argument shows $|W_n| \approx n^{-1}$
whence $|V_n| \approx n^{-2}$. 
By Lemma \ref{lem:geomcornercorr}, there is some $n_{W}$
so that $F(W) = \cup_{n = n_W}^{\Cr{c:star}n_W} W_n$. We conclude
\begin{equation}
\label{eq:type3geomcomp}
\mathcal L_1(W) = 
\sum_{n = n_W}^{\Cr{c:star}n_W} \frac{1}{\Lambda_{n,1}} \leq
C \sum_{n = n_W}^{\Cr{c:star}n_W} \frac{1}{n} \leq 2 C \ln \Cr{c:star}.
\end{equation}
We obtained the variant of \eqref{eq:type3longest}, where
$\rho$ is constant. Generalizing it to all
admissible densities $\rho$ is standard only using
\eqref{eq:densityosc} and \eqref{eq:Zas} and so we omit the proof
(see e.g. \cite{CZ09}). 

In case $x_h$ is a type 2 boundary corner point, we write $V_n = W \cap  E^+({x_h,n})$ 
and $W_n = F(V_n)$ as before. Now $W_n$ is not a homogeneous curve as
it is further cut into infinitely many pieces by secondary singularities. However, 
on any of these pieces, the expansion factor is large and so $\mathcal L_1(W)$ is 
bounded as in \cite[Remark 5.59]{CM06}. The lemma follows.
\end{proof}

\begin{lemma}
\label{lem:expansion}
There is some integer $\Cl[Ns]{n:type2} \geq \Cr{n:type3}$ so that
for every $q < 1$ there is a constant
$\Cl{C:type2factor}$ so that for every 
standard pair $\ell = (W, \rho)$ with 
$W \subset \mathcal B_{\Cr{n:type2}}$,
$$
\mathcal Z_q(F_* \ell) \leq \Cr{C:type2factor} \mathcal Z_q (\ell).
$$
\end{lemma}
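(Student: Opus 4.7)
The plan is to decompose $W$ via the singularities $\partial D^+(h,N)$ near each regular type 2 boundary point $x_h$, to verify under (A2) that each image piece $F(V_{h,N})$ is a single unstable curve lying in the bulk strip $\mathbb H_0$, and then to sum the resulting pointwise estimates, using the restriction $q<1$ to absorb the logarithm coming from the range of $N$.

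First, I would choose $\Cr{n:type2}\geq \Cr{n:type3}$ large enough that $F(D^+(h,N))\subset \mathbb H_0$ for every $N\geq \Cr{n:type2}$ and every regular type 2 boundary point $x_h$. This is possible because the opposite boundary point $x_{h'}$ is a concave corner and (A2) forces $|\varphi_{h'}|\neq \pi/2$, hence $|\varphi_{h'}|$ is bounded strictly away from $\pi/2$; a direct geometric argument analogous to the corner-side computation in the proof of Lemma \ref{lem:expansion0} shows that $F(D^+(h,N))$ is contained in an $O(N^{-1})$-neighborhood of $(r_{h'},\varphi_{h'})$, so it lies in $\mathbb H_0$ for $N$ large. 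Moreover, since $\cos\varphi_{h'}$ is bounded below, the expansion of $F$ on $V_{h,N}=W\cap D^+(h,N)$ satisfies $\Lambda_{h,N}\geq cN$; consequently $|W_{h,N}|=|F(V_{h,N})|\approx N|V_{h,N}|$, the push-forward density on $W_{h,N}$ is at most $C/(N|W|)$, and its total mass is $\leq C|V_{h,N}|/|W|$.

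Next, the contribution of $W_{h,N}$ to $\nu_{F_*\ell}(r_{F_*\ell}<\varepsilon)$ is bounded by $\min(|V_{h,N}|/|W|,\ C\varepsilon/(N|W|))$. Letting $N_1$ denote the smallest $N\geq \Cr{n:type2}$ with $V_{h,N}\neq\emptyset$, disjointness of the $V_{h,N}$'s inside $W$ yields $|W|\leq \sum_{N\geq N_1}CN^{-2}\leq C/N_1$. Summing over the finitely many $h$ and splitting the $N$-sum at the crossover $N^*\sim 1/(2\varepsilon)$ gives
\[
\nu_{F_*\ell}(r_{F_*\ell}<\varepsilon)\ \leq\ \frac{C\varepsilon}{|W|}\Bigl(1+\log\tfrac{1}{\varepsilon N_1}\Bigr)\qquad\text{for}\ \varepsilon<\tfrac{1}{2N_1},
\]
and $\leq 1$ otherwise. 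The elementary inequality $\sup_{u\in(0,1/2]} u^{1-q}\log(1/u)\leq [e(1-q)]^{-1}$, valid precisely for $q<1$, together with $|W|\asymp N_1^{-1}$ in the many-piece regime, produces $\mathcal Z_q(F_*\ell)\leq C(1-q)^{-1}|W|^{-q}\lesssim (1-q)^{-1}\mathcal Z_q(\ell)$ via \eqref{eq:Zas}. The complementary few-piece regimes (including $W$ entirely inside a single $V_{h,N}$, where $|W|\ll N_1^{-1}$) are treated directly via subadditivity, yielding the sharper estimate $\mathcal Z_q(F_*\ell)\leq (K/N_1)^q\mathcal Z_q(\ell)\leq \mathcal Z_q(\ell)$ where $K$ is the (bounded) number of pieces. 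Passing from the uniform density bound $\rho\approx 1/|W|$ to a general standard pair density is routine via \eqref{eq:densityosc}.

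The main obstacle is the first step: establishing the geometric estimate $|F(D^+(h,N))|_{\mathrm{unstable}}=O(N^{-1})$ in the type 2 configuration, so that (A2) alone confines the image to $\mathbb H_0$ uniformly in $N\geq \Cr{n:type2}$. Without (A2) the image would cross many strips $\mathbb H_k$, and the resulting extra sum over $k$ would interact badly with the logarithm in the $N$-sum and require an essentially different argument. A secondary subtlety is that the unified log-bound is not tight in the few-piece regime, so the case split outlined above (multi-piece with $|W|\asymp N_1^{-1}$ versus few-piece with $|W|\lesssim N_1^{-2}$) is genuinely needed to close the argument.
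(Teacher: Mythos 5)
Your proposal is correct and follows essentially the same route as the paper's proof: decompose $W$ via the pieces $V_{h,N}=W\cap D^+(h,N)$, invoke (A2) to guarantee a non-grazing landing at the opposite concave corner point (hence a single homogeneity strip, expansion $\approx N$, $|W_{h,N}|\approx N^{-1}$, $|V_{h,N}|\approx N^{-2}$), and close via the convergence of $\sum N^{-2+q}$, which is precisely where $q<1$ enters; the paper verifies this through the inequality \eqref{eq:flightsum} using a three-case split (infinitely many pieces, one piece, finitely many pieces with the $f(a)$ computation), whereas you bound $\nu_{F_*\ell}(r_{F_*\ell}<\varepsilon)$ directly and take the supremum over $\varepsilon$ at the end, a mild reorganization of the same calculation. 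One small imprecision worth noting: in your ``few-piece'' regime the number of pieces $K$ need not actually be bounded (for instance $K\sim\eta N_1$ with small $\eta$ is compatible with $|W|\ll N_1^{-1}$), but your stated estimate $(K/N_1)^q\le 1$ continues to hold throughout that range, so the argument still closes.
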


\begin{proof}
Let $x_h = (r_h, \pm \pi/2)$ be a regular boundary point of a type 2 corridor and let
$\ell = (W, \rho)$ be 
such that $W \subset \mathcal B(x_h, \Cr{n:type2})$ for some $\Cr{n:type2}$ 
to be specified later.
As in the case of type 3 corridors (cf. the proof of Lemma \ref{lem:expansion0}), we find that
$V_n = W \cap  D^+_{h,n}$, 
$W_n = F(V_n)$, the expansion of $F$ on $V_n$ is
$\approx n$, $|W_n| \approx n^{-1}$, and
$|V_n| \approx n^{-2}$.
The main difference from the case of type 3 corridors is that now Lemma \ref{lem:geomcornercorr}
fails to hold. Indeed, the curve $W$ may be cut into {\it infinitely
many} pieces (see the right panel of Figure \ref{fig2}) and so the sum in \eqref{eq:type3geomcomp} diverges.
We are going to prove that 
\begin{equation}
\label{eq:flightsum}
 \sum_{n \geq \Cr{n:type2}} \left( \frac{|W|}{|W_n|}\right)^q
\frac{|V_n|}{|W|} \leq  \Cr{C:type2factor}.
\end{equation}
First note that \eqref{eq:flightsum} implies the lemma when $\rho$ is constant.
The general case can be proven
using \eqref{eq:densityosc} and \eqref{eq:Zas}. Thus it remains to verify \eqref{eq:flightsum}. To prove
\eqref{eq:flightsum}, we distinguish three cases.

First assume that $W$ is cut into infinitely many pieces, that is, $V_n \neq \emptyset$
for infinitely many $n$'s. Then $x_h$ is necessarily an endpoint of $W$. Let $M$ be the smallest
integer $n$ so that $W$ fully crosses $D^+_{h,n}$. Then we have $|W| \approx M^{-1}$ and so 
$$
 \sum_{n \geq \Cr{n:type2}} \left( \frac{|W|}{|W_n|}\right)^q
\frac{|V_n|}{|W|} \leq C |W|^{q-1} \sum_{n \geq M-1} n^{-2 + q} 
\leq C |W|^{q-1} \sum_{n \geq  \bar C |W|^{-1}} n^{-2 + q} \leq  \Cr{C:type2factor}.
$$

Next, assume that $W \subset D^+_{h,M}$ for some $M$. Then the left hand side of 
\eqref{eq:flightsum} is $(|W| / |W_M|)^q \approx M^{-q}$, which is also bounded.
 
Finally, assume that there are positive integers $M_1 \leq M_2$ so that
$V_n \neq \emptyset$ if and only if $M_1 \leq n \leq M_2 $. The contribution of 
$n=M_1$ and $n=M_2$ is bounded as in the second case. Thus it suffices to bound the contribution
of $n=M_1+1,...,M_2-1$, that is the set of $n$'s so that $W$ fully crosses $D^+_{h,n}$.
To simplify the notation, we replace $M_1 + 1$ by $M_1$ and $M_2 -1 $ by $M_2$. We obtain
$$
\sum_{n = M_1}^{M_2} \left( \frac{|W|}{|W_n|}\right)^q
\frac{|V_n|}{|W|} \leq C |W|^{q-1} \sum_{n = M_1}^{M_2} n^{-2 + q} 
\leq C (M_1^{-1} - M_2^{-1} )^{q-1} (M_1^{q-1} - M_2^{q-1} ).
$$
Writing $a = M_2 / M_1 -1$, we find that the above display is bounded by
$$
C(M_2^{-1} a)^{q-1} M_2^{q-1} \left[ \left( \frac{M_1}{M_2} \right)^{q-1} -1\right]
= Ca^{q-1} [(1+a)^{1-q} -1] =: Cf(a).
$$ 
Now $f$ is a continuous function on $\mathbb R_+$ with $\lim_{a \rightarrow \infty} f(a) = 1$ and
$\lim_{a \rightarrow 0} f(a) = 0$ (in fact $f(a) \sim (1-q)a^q$ as $a \rightarrow 0$). Consequently,
$f$ is bounded and so \eqref{eq:flightsum} follows.
\end{proof}

\begin{remark}
\label{rem:expansion}
The proof of Theorem \ref{thm2} could be simplified if we knew that 
the constant $\Cr{C:type2factor}$ given by Lemma \ref{lem:expansion} is less than $1$.
Although the 1-step expansion as required by (H5) of 
\cite{CZ09} would not follow, 
as it does not even hold in type C billiards, 
at least long flights in a short succession would not cause a trouble.
Unfortunately we do not know whether $\Cr{C:type2factor} < 1$ 
and so we need Lemma \ref{lem:nonper}, which says that two long free flights in short succession emanating near a corner point are not possible.
\end{remark}

\begin{lemma}
\label{lem:nonper}
For every $K \in \mathbb N$ there exists $\Cl[Ns]{N:N(K)type3} = \Cr{N:N(K)type3}(K)$ so that
for all $k=1,...,K$,
$$
\mathcal E_{\Cr{N:N(K)type3}} \cap F^{-k}(\mathcal E_{\Cr{N:N(K)type3}}) = \emptyset .
$$

\end{lemma}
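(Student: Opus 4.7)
The plan is to argue by contradiction via a compactness and limit-orbit analysis. Suppose the lemma fails for some $K$: then one can select sequences $N_j\to\infty$, $k_j\in\{1,\ldots,K\}$, and $x_j\in\mathcal{E}_{N_j}$ with $F^{k_j}(x_j)\in\mathcal{E}_{N_j}$. Since $A'$ and $\{1,\ldots,K\}$ are finite, after passing to a subsequence I may assume that $k_j=k$ is fixed, that $x_j\in\mathcal{E}(h,N_j)$ for a single $x_h\in A'$, and that $F^k(x_j)\in\mathcal{E}(h'',N_j)$ for a single $x_{h''}\in A'$. Since the sets $E^+(h,N)$ collapse to the singleton $\{x_h\}$ in diameter as $N\to\infty$, this yields $x_j\to x_h$ and $F^k(x_j)\to x_{h''}$ in $\mathcal{M}$.

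The next step is to extract a limit orbit. The post-collisional velocity at $x_j$ approaches $v_H$, the corridor direction of the unique corridor $H$ with $x_h\in A_H$, while $\tau(x_j)\to\infty$. The long flight asymptotically traces the boundary of $Q_H$ and lands, in the limit, at the opposite boundary point of $H$; hence $F(x_j)$ converges to a well-defined $y_1\in\mathcal{M}$. By (A2) the incoming velocity $v_H$ is transverse to the boundary at that point, so the reflection giving the post-collisional velocity of $y_1$ is non-degenerate. Iterating, and passing to finitely many further subsequences to resolve possible branch choices at any intermediate improper collisions, I obtain limits $y_i=\lim_j F^i(x_j)$ for $i=0,\ldots,k$ with $y_0=x_h$ and $y_k=x_{h''}$. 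In particular the post-collisional velocity at $y_k$ equals the corridor direction $v_{H''}$ of a corridor $H''$ containing $x_{h''}$.

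The contradiction should then come from the rigidity of this limit orbit. I have a deterministic orbit, with specified branch choices, starting at the corner $x_h$ with outgoing velocity $v_H$ and terminating after exactly $k\leq K$ collisions at the corner $x_{h''}$ with outgoing velocity $v_{H''}$. The corridor directions $v_H, v_{H''}$ are rigid (prescribed by the lattice of scatterers), and (A1) forces each corridor to have exactly two boundary points; consequently the set of candidate tuples $(h,h'',k,\text{branches})$ is finite. For each, the requirement is a system of equations linking the fixed corridor directions to the angles of incidence at the intermediate collisions via the reflection law. Under admissibility combined with (A1) and (A2), one performs a case analysis---by whether each intermediate collision is near a regular point or near a corner, and by the branch chosen at each corner visit---to exclude every such system, using strict convexity of the smooth pieces and the positive lower bound on corner angles to separate reflected directions from the rigid corridor directions.

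The main obstacle is the last step. Limit orbits may pass through several corner points where improper collisions admit multiple continuations, and the full list of candidate orbits, though finite, is nontrivial. The argument remains tractable because $k\leq K$ bounds the combinatorial complexity, only finitely many corridor directions appear, and the constraints to be checked are explicit. The key geometric input is that the transversality furnished by (A1), (A2) together with the strict convexity of the boundary prevents the reflected direction at a corner from being a corridor direction except in configurations already ruled out by the assumptions, so the composition of $k$ such reflections cannot align with a prescribed $v_{H''}$.
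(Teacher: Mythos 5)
The opening of your proposal---extracting a subsequence so that $k_j\equiv k$, $x_j\to x_h$, $F^k(x_j)\to x_{h''}$, and then passing to a shadowed limit orbit $y_0,\ldots,y_k$---is the right idea and matches the paper's strategy. The difficulty, and where your proposal breaks down, is the final step: you claim that ``(A1), (A2) together with strict convexity of the boundary prevents the reflected direction at a corner from being a corridor direction'' so that ``the composition of $k$ such reflections cannot align with a prescribed $v_{H''}$,'' and thus the limit orbit cannot exist. This is false. Nothing in the hypotheses forbids the forward orbit of a corner boundary point from arriving (after finitely many collisions, possibly with prescribed branch choices) at another corner boundary point with outgoing velocity in the corridor direction; the paper's proof explicitly accommodates this possibility (its ``Case 4''). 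Since the limit orbit can genuinely exist, you cannot derive the contradiction by ruling it out.

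What actually closes the argument in the hard case is an \emph{orientation} observation, not a rigidity/nonexistence one: $F(x_j)$ and the reference point $F(r_{H,l,1},\varphi_{H,l,1})$ lie on a segment $W$ with slope $d\varphi/dr\ge 0$, and this sign is propagated forward since images of $W$ lie in the unstable cone. If the reference orbit later lands on a point $y_{k_1}\in A'$, then the shadowing point $(p_{k_1},\psi_{k_1})$ sits to the \emph{northeast} of $y_{k_1}$, whereas the long-flight region $\mathcal E(y_{k_1},N)$ lies to the \emph{southeast} (i.e., a chord from $y_{k_1}$ into $\mathcal E(y_{k_1},N)$ has $d\varphi/dr<0$). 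This incompatibility of sides, not nonexistence of the limit orbit, is what forces $(p_{k_1},\psi_{k_1})\notin\mathcal E_N$. Your proposal has no substitute for this step. Two further subtleties your sketch glosses over but the paper must address: (i) the reference orbit may have grazing collisions where $F$ is discontinuous, so one must track the shadowing through the \emph{flow} (continuous, or multicontinuous at corners) and bound the number of non-grazing collisions in a fixed flow time by a constant $L$; (ii) for type~2 corridors the reference orbit after the long flight may experience one or two near-grazing bounces at the regular boundary point before reaching the far corner, and the indexing must be shifted accordingly. Neither of these is fatal to the compactness setup, but both require care, and without the orientation argument the proof does not close.
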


The proof of Lemma \ref{lem:nonper} is longer than the proof of the
other lemmas. To avoid 
disruption in the main ideas here, we postpone this proof to the next section.

Note that in the case of type 2 corridors, two long flights are possible. Specifically, we have

\begin{lemma}
\label{lem:nonper2}
For every $K \in \mathbb N$ there exists $\Cl[Ns]{N:N(K)type2} = \Cr{N:N(K)type2}(K)$ so that for all $k=1,2,...,K$
$$
\mathcal B_{\Cr{N:N(K)type2}} \cap F^{-k} 
(\mathcal E_{\Cr{N:N(K)type3}(K)}\mathcal B_{\Cr{N:N(K)type2}})  = \emptyset
$$
and for all $x \in \mathcal E(x_h,{\Cr{N:N(K)type3}(K)})$, the set
$$
\mathfrak k = \{ k = 1,..., K: F^k(x) \in \mathcal B_{\Cr{N:N(K)type2}} \}
$$
can only be non empty if 
$x_h$ is of type 2. In this case, $\mathfrak k = \{1\}$ or $\mathfrak k = \{2\}$. 
\end{lemma}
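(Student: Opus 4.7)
My plan is to combine a direct geometric analysis of the images $F(\mathcal B(h, n))$ and $F(\mathcal E(x_h, n))$ with Lemma \ref{lem:nonper}, which already controls consecutive visits to $\mathcal E$. The geometric input I would extract first is a description of where these images land: $F(\mathcal B(h, n))$ sits in a small arrival wedge near the corner endpoint $x_{h'}$ of the type 2 corridor containing the regular boundary point $x_h$, with post-collision direction equal to the reflection $\tilde v$ of $v_H$ across the normal of the arm hit at $x_{h'}$; $F(\mathcal E(x_h, n))$ for $x_h$ a type 2 corner lies near the regular (grazing) endpoint of the corridor with post-collision direction close to $-v_H$; and $F(\mathcal E(x_h, n))$ for $x_h$ a type 3 corner lies near the other corner endpoint. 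These facts follow from the wedge-size analysis of Section \ref{sec:thm2} together with the reflection law.

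To prove the first claim, I would fix $x \in \mathcal B_{N_2}$ (with $N_2$ the constant of the lemma) and first treat $k = 1$: since $F(x)$ lies near a corner and $\mathcal B$ is supported near regular boundary points, $F(x) \notin \mathcal B_{N_2}$ automatically once $N_2$ is large. For $F(x) \in \mathcal E_{N_3}$ one would need $\tilde v$ to coincide with the direction $v_{H''}$ of another corridor through $x_{h'}$; in this ``coincidence'' scenario the two wedges share a center in $\mathcal M$, but $F(\mathcal B(h, n))$ is thin in the stable direction of size $\approx n^{-3/2}$ while the competing departure wedge $\mathcal E(h'', N_3)$ is thin in the unstable direction of size $\approx N_3^{-2}$, so their transversality together with (A2) forces empty intersection for $N_2, N_3$ chosen large enough depending on $K$. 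For $k \geq 2$: after step 1 the orbit is near a corner, so Lemma \ref{lem:nonper} (applied with budget $K - 1$ to $F(x)$) excludes further $\mathcal E$-visits, while any potential $\mathcal B$-visit in this range is ruled out by combining this with the second claim (applied to the forced preceding $\mathcal E$-visit).

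For the second claim, I would split on the type of $x_h$. If $x_h$ is a type 3 corner, $F(x)$ lies near another corner, so $F(x) \notin \mathcal B_{N_2}$ directly; combined with Lemma \ref{lem:nonper} applied to $x$ itself (which excludes $\mathcal E$-visits in $[1, K]$), and the observation that entering $\mathcal B$ at time $k$ places $F^{k+1}(x)$ near a corner so that a $\mathcal B$-visit in $[1, K]$ would create a nearby $\mathcal E$-visit, no $\mathfrak k$ can be non-empty. If $x_h$ is a type 2 corner, $F(x)$ lies near the regular endpoint $x_{h'}$ of $H$, and two mutually exclusive scenarios are possible. Either $F(x)$ sits in the departure wedge at $x_{h'}$, giving $\mathfrak k = \{1\}$, after which $F^2(x)$ is a long flight back into the vicinity of the corner $x_h$ and Lemma \ref{lem:nonper} handles the remainder; or $F(x)$ lies just outside this departure wedge, undergoes a short bounce returning near the regular boundary, $F^2(x)$ enters a departure wedge giving $\mathfrak k = \{2\}$, and $F^3(x)$ returns near the corner $x_h$, after which Lemma \ref{lem:nonper} again takes over.

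The principal difficulty will be the coincidence case in the first claim, where reflection at a corner exactly maps one corridor direction into another; here separation of wedge centers in $\mathcal M$ is unavailable, and the argument must rely on the complementary thin directions (stable for the arrival wedge versus unstable for the departure wedge) together with (A2) to force empty intersection. All other steps reduce to careful geometric bookkeeping around the images described in the first paragraph and repeated use of Lemma \ref{lem:nonper}.
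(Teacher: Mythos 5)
Your plan takes a fundamentally different route from the paper, and it has a concrete gap. The paper's proof is short and reduces almost everything to Lemma~\ref{lem:nonper} via a single observation about the \emph{past} of a point in $\mathcal B_{N_2}$: if $x\in\mathcal B(x_h, N_2)$ with $N_2$ large enough, then $\tau(x)$ is huge, and applying Lemma~\ref{lem:flightgrowth} in reverse time, either $\tau(F^{-1}(x))$ is huge, or $\tau(F^{-1}(x))$ is short and $\tau(F^{-2}(x))$ is huge; since that long backward flight must end near the regular boundary point $x_h$ of a type~2 corridor, it must originate near the corner boundary point on the other side, so $F^{-1}(x)$ or $F^{-2}(x)$ lies in $\mathcal E_{N_3(K+2)}$. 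Once a visit to $\mathcal B_{N_2}$ is known to be preceded within two steps by a visit to $\mathcal E$, both halves of the lemma follow directly by applying Lemma~\ref{lem:nonper} with budget $K+2$ along the resulting orbit; there is no need for any analysis of the shape or size of the images $F(\mathcal B(h,n))$, $F(\mathcal E(x_h,n))$. The concrete choice of $N_2(K)$ in the paper, $N_2(K) = \frac{\max_{x\in A}\tau(x)}{\min_{x\in A}\tau(x)}\,C\,N_3(K+2)^2$, is precisely designed so that this backward flight-growth argument goes through.

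Your forward geometric tracing could in principle be made to work, but as written it has gaps that the backward reduction simply sidesteps. First, you invoke ``Lemma~\ref{lem:nonper} (applied with budget $K-1$ to $F(x)$)''; this is not a legitimate application, because $F(x)$ sits in the \emph{arrival} wedge near a corner (the image of $\mathcal B$), whereas $\mathcal E_{N_3}$ is by definition a union of \emph{departure} wedges $E^+(h,N)$. Lemma~\ref{lem:nonper} says nothing about points that are merely close to a corner boundary point without being in $\mathcal E_{N_3}$, so you would need to prove a strengthened form of that lemma before this step is available. Second, your ``coincidence scenario'' (a reflection at the corner mapping the arrival direction into the direction of another corridor) is a genuine configuration, but the thinness argument you sketch relies on a stable-direction size $\approx n^{-3/2}$ for $F(\mathcal B(h,n))$ that is not established anywhere in the paper and is not an immediate consequence of the stated wedge dimensions; you would owe a proof of that estimate. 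In the paper's approach this case is handled for free: a coincidence would place $F^{-j}(x)$ and $F(x)$ both in $\mathcal E_{N_3(K+2)}$ within $\leq 3$ iterates, which Lemma~\ref{lem:nonper} with budget $K+2$ already forbids, with no geometry needed. Third, when you say ``any potential $\mathcal B$-visit in this range is ruled out by combining this with the second claim (applied to the forced preceding $\mathcal E$-visit),'' you are implicitly assuming exactly the backward flight-growth fact that the paper's proof establishes and builds on, but you never state or prove it; as written this step is circular with your own proof of the second claim and leaves the first claim incomplete. The single missing idea that would repair and dramatically simplify your argument is to run Lemma~\ref{lem:flightgrowth} backwards to locate a recent $\mathcal E$-visit before every $\mathcal B$-visit.
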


\begin{proof}
Let 
$$\Cr{N:N(K)type2}(K) = 
\frac{\max_{x \in A} \tau(x)}{\min_{x \in A} \tau(x)}
\Cr{c:conseclongflight}\Cr{N:N(K)type3}^2(K+2),$$ 
where $\Cr{c:conseclongflight}$ is defined in Lemma 
\ref{lem:flightgrowth}.
If 
$x \in \mathcal B(x_h, \Cr{N:N(K)type2})$, then as in Lemma \ref{lem:flightgrowth}, we have either $F^{-1}(x) \in \mathcal E_{\Cr{N:N(K)type3}(K+2)}$ or 
$F^{-2}(x) \in \mathcal E_{\Cr{N:N(K)type3}(K+2)}$. Now the result follows from Lemma \ref{lem:nonper}.


\end{proof}


For the remaining part of the proof let us fix some $q < 1$.
Recalling \eqref{eq:Zas}, there exists a constant $\bar c$ only depending on $\mathcal D$ so that for any standard pair $\ell = (W, \rho)$,
\begin{equation}
\label{defbrc}
1/ \bar c \leq \mathcal Z_q(\ell)  |W|^q \leq \bar c, \quad 1/ \bar c \leq \mathcal Z_1(\ell)  |W| \leq \bar c.
\end{equation}


For a given homogeneous
unstable curve $W$ we write 
$$
\bm T_{N,N'}(W) =\bm T_{N, N',\mathcal D}(W) = \min \{ m \geq 0: F^{-m}(W) \subset \mathcal E_N \mathcal B_{N'}\}.
$$
The next lemma states a weaker version of Theorem \ref{thm2}, namely, when only
those H-components in $F^m(\ell)$ that have not visited $\mathcal E_{N} \mathcal B_{N'}$
for some $N, N'$ large are considered.

\begin{lemma}
\label{lem:truncation}
There exists $m_0 \in \mathbb N$, and $\Cl{c:outtacorr}$
so that for every $N, N'$ and for every $K$ there is some $\delta_0 = \delta_0(N,N', K)$ such that 
the following holds for every standard pair $\ell = (W, \rho)$ with $|W| < \delta_0$ and for all 
$m =1,2,..., 2Km_0 + 6$
\begin{equation}
\label{eq:Z01est}
\mathcal Z_1(F^m_*(\ell)|_{W_i: \bm T_{N, N'}(W_i) > m}) < \Cr{c:outtacorr} \mathcal Z_1(\ell),
\end{equation}
\begin{equation}
\label{eq:Z02est}
\mathcal Z_1(F^{K m_0}_*(\ell)|_{W_i: \bm T_{N, N'}(W_i) > Km_0}) < 
2^{-K} \mathcal Z_1(\ell).
\end{equation}
\end{lemma}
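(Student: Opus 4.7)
The plan is to reduce the lemma to the setting of Theorem~\ref{thm1}. The key observation is that the restriction $\bm T_{N,N'}(W_i)>m$ forces every preimage $F^{-l}(W_i)$ for $l=0,\dots,m$ to lie outside $\mathcal E_N\mathcal B_{N'}$, so that the free flight along these orbits is uniformly bounded by some $\bar\tau=\bar\tau(N,N')$. In this bounded-flight regime, the proof of Theorem~\ref{thm1} goes through with cosmetic changes: one simply runs the component-counting, expansion, and nearly-grazing arguments of Lemmas~\ref{lem:comp}, \ref{lem:comp2}, \ref{lem:grazing} for the subsystem obtained after excising $\mathcal E_N\cup\mathcal B_{N'}$. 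This yields a restricted $m_0$-step expansion
\[
\lim_{\delta\to 0}\sup_{|W|<\delta}\sum_{i:\,\bm T_{N,N'}(W_{i,m_0})>m_0}\frac{1}{\Lambda_{i,m_0}}\le\kappa<1,
\]
and by replacing $m_0$ by a suitable multiple we may assume $\kappa\le 1/4$.

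Once this expansion is available, the rest follows from standard growth-lemma manipulations. First I would establish a single-step bound: for standard pairs $\ell=(W,\rho)$ with $W\cap\mathcal E_N\mathcal B_{N'}=\emptyset$, the set $F(W)\setminus\mathcal E_N\mathcal B_{N'}$ decomposes into at most $L=L(N,N')$ H-components with uniformly bounded-below expansion, giving
\[
\mathcal Z_1\bigl(F_*\ell|_{\bm T_{N,N'}>1}\bigr)\le B\,\mathcal Z_1(\ell)+C_1
\]
via \eqref{eq:Zas} and \eqref{eq:densityosc}, where $B=B(N,N')$ and $C_1$ absorb the contribution of freshly-created short pieces near singularities. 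Combining with the restricted $m_0$-step expansion by the standard derivation (as in \cite[Lemma 4.10]{ChD09}, \cite[Section 3]{DST13}),
\[
\mathcal Z_1\bigl(F^{m_0}_*\ell|_{\bm T_{N,N'}>m_0}\bigr)\le\kappa\,\mathcal Z_1(\ell)+C_2.
\]

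To conclude I would iterate. Writing $m=jm_0+r$ with $0\le r<m_0$ and $j\le 2K+1$, and applying the single-step bound $r$ times after $j$ applications of the $m_0$-step contraction,
\[
\mathcal Z_1\bigl(F^m_*\ell|_{\bm T_{N,N'}>m}\bigr)\le B^{m_0}\kappa^j\,\mathcal Z_1(\ell)+B^{m_0}\Bigl(\frac{C_2}{1-\kappa}+C_1\Bigr).
\]
Now choose $\delta_0=\delta_0(N,N',K)$ small enough so that, by \eqref{defbrc}, the lower bound $\mathcal Z_1(\ell)\ge\bar c^{-1}/|W|$ dominates: specifically, so that the additive term above is at most $\tfrac14\cdot 4^{-K}\mathcal Z_1(\ell)$. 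Then using $\kappa^j\le 1$ we obtain \eqref{eq:Z01est} with $\Cr{c:outtacorr}=2B^{m_0}$, and at $j=K$, $r=0$, using $\kappa^K\le 4^{-K}$, we obtain $\mathcal Z_1\le 2\cdot 4^{-K}\mathcal Z_1(\ell)<2^{-K}\mathcal Z_1(\ell)$, which is \eqref{eq:Z02est}.

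The principal obstacle is verifying that the ingredients of Theorem~\ref{thm1} survive the presence of the additional excised set $\mathcal B_{N'}$, which corresponds to the infinitely accumulating singularity curves near regular boundary points of type~2 corridors (absent in type D1). Once $\mathcal B_{N'}$ is excised, however, the free flight on the remaining set is bounded, and the combinatorial and geometric arguments of Lemmas~\ref{lem:comp}--\ref{lem:grazing} adapt directly. The other delicate point is the bookkeeping: one must track how $B$, $C_1$, $C_2$ depend on $N,N'$, and then choose $\delta_0=\delta_0(N,N',K)$ so small that the additive constants become negligible compared to $\mathcal Z_1(\ell)$ uniformly for all $j\le 2K+1$; this is routine but dictates the triple dependence of $\delta_0$ in the statement.
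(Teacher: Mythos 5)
Your high-level plan is in the right spirit---reduce the lemma to a version of Theorem~\ref{thm1}'s expansion estimate valid outside the excised set, then iterate---but two of your supporting claims are false as stated, and the part you wave away as ``cosmetic changes'' is exactly where the paper has to do real work.

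First, the premise that the restriction $\bm T_{N,N'}(W_i)>m$ makes the free flight along preimage orbits ``uniformly bounded by some $\bar\tau(N,N')$'' is incorrect: $\mathcal E_N\mathcal B_{N'}$ is by construction a neighborhood of the boundary points of type~2 and type~3 corridors only (see \eqref{eq:defE}--\eqref{eq:defB}), and the type~1 corridors---bounded by regular points---are not excised. Orbits staying outside $\mathcal E_N\mathcal B_{N'}$ can therefore still execute arbitrarily long flights through type~1 corridors. Your downstream claim that ``$F(W)\setminus\mathcal E_N\mathcal B_{N'}$ decomposes into at most $L=L(N,N')$ H-components with uniformly bounded-below expansion'' fails for the same reason: a short unstable curve approaching a type~1 boundary point produces infinitely many H-components landing in $\mathbb H_k$ for arbitrarily large $|k|$. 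The correct observation is not that the flight is bounded but that the restricted dynamics look like a \emph{type D1} billiard, to which Theorem~\ref{thm1} (which is precisely about unbounded flight with only regular corridor boundaries) applies.

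Second, the phrase ``the combinatorial and geometric arguments of Lemmas~\ref{lem:comp}--\ref{lem:grazing} adapt directly'' is doing the hard work without justification. The paper handles this concretely: it smooths a small (diameter $\lesssim N^{-1}$) neighborhood of the corner points bounding the corridors, producing a genuine type D1 billiard table $\mathcal D'(N)\supset\mathcal D$, applies Theorem~\ref{thm1} to $\mathcal D'$, and then observes that for any H-component $W_i$ with $\bm T_{N,N',\mathcal D'}(W_i)>m$ the backward $\mathcal D$- and $\mathcal D'$-orbits coincide for $m$ steps, so the expansion estimate transfers. It must then address a real subtlety: the curvature of $\partial\mathcal D'(N)$ blows up as $N\to\infty$, so one has to check that the relevant constants $\Xi$ and $C_\#$ from Theorem~\ref{thm1} are nonetheless uniform in $N$ (the paper argues that orbits visiting the high-curvature region are precisely those discarded). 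Your proposal does not construct any such table nor confront the uniformity-in-$N$ issue.

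Your final bookkeeping (introducing additive constants $C_1,C_2$ in a growth-lemma inequality and absorbing them by shrinking $\delta_0$) is a legitimate alternative route to the paper's, which instead works directly with the expansion sums $\mathcal L_m$ via subadditivity \eqref{eq:subadd} and Lemma~\ref{lemma:upperbdlen}, passing to $\mathcal Z_1$ via \eqref{eq:Zas} without additive terms. That part of your write-up is fine (modulo the minor index error $j\le 2K+1$, which should be $j\le 2K+6$); the gap lies in establishing the restricted expansion estimate in the first place.
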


\begin{proof}
First, assume that $\rho$ is constant. 
Now we claim 
the following: there is some $m_0$ and $\Cr{c:outtacorr}$ so that for any $N, N'$
\begin{equation}
\label{eq:Z11est}
\lim_{\delta \rightarrow 0} \sup_{W: |W|< \delta} \sum_{W_{i} \in F^{m_0}(W): \bm T_{N, N'}(W_i) > m_0} \frac{1}{\Lambda_{i,m_0}} < \frac12,
\end{equation}
and for any $m$,
\begin{equation}
\label{eq:Z12est}
\lim_{\delta \rightarrow 0} \sup_{W: |W|< \delta} \sum_{W_{i} \in F^{m}: \bm T_{N, N'}(W_i) > m} \frac{1}{\Lambda_{i,m}} < \Cr{c:outtacorr}.
\end{equation}
To prove this claim, let us replace a small neighborhood (of diameter $< c N^{-1}$)
of the corner points bounding the corridors by a smooth curve so as the 
new billiard table $\mathcal D' = \mathcal D'(N)$ 
contains $\mathcal D$. By construction, for all $W_i$
H-component of $F_{\mathcal D'}^m(W)$ with
$\bm T_{N, N', \mathcal D'}(W_i) > m$ and for all $x \in W_i$, the orbits
$F_{\mathcal D}^{-m}(x), ...., F_{\mathcal D}^{-1}(x), x$ 
and 
$F_{\mathcal D'}^{-m}(x), ...., F_{\mathcal D'}^{-1}(x), x$ coincide.
Then Theorem \ref{thm1} implies 
that the left hand side of
\eqref{eq:Z11est}
is bounded by some number $\beta <1$. Replacing $m_0$ by $m_0^{\ln(1/2) / \ln \beta}$ and using 
\eqref{eq:subadd} and Lemma \ref{lemma:upperbdlen}, we obtain \eqref{eq:Z11est}. 
Although we only proved Lemma \ref{lemma:upperbdlen} 
under the conditions
of Theorem \ref{thm1}, it is valid under the more general conditions of Theorem \ref{thm2}. Indeed, a long flight and an almost grazing collision expands
an unstable curve more than just a long flight.
Likewise, we obtain
\begin{equation}
\label{eq:Z31est}
\lim_{\delta \rightarrow 0} \sup_{W: |W|< \delta} \sum_{W_{i} \in F^{Km_0}: \bm T_{N, N'}(W_i) > Km_0} \frac{1}{\Lambda_{i,Km_0}} < 2^{-K}.
\end{equation}

Also observe that \eqref{eq:Z12est} follows from the proof of Theorem \ref{thm1}
for $m \leq m_0$. Then it also follows for $m \geq m_0$ by
\eqref{eq:Z11est} and by \eqref{eq:subadd}.

Since our construction did not depend on $N'$, it remains to prove that $m_0$ and $\Cr{c:outtacorr}$ are uniform 
in $N$. 
Although the curvature of $\partial \mathcal D'$
is not uniformly bounded in $N$, points visiting the part of the phase space with unbounded curvature are discarded. Then it remains
to observe that the constants $\Xi$ and $C_{\#}$ appearing in the proof of Theorem \ref{thm1} are uniform in $\mathcal D'$ and so is $m_0$ and
$\Cr{c:outtacorr}$.

Now \eqref{eq:Z31est} and \eqref{eq:Z12est} combined with \eqref{eq:densityosc} and \eqref{eq:Zas} imply
\eqref{eq:Z01est} and \eqref{eq:Z02est}. 

Finally, if $\rho$ is not constant, we just need to apply \eqref{eq:densityosc} once more
to complete the proof.
\end{proof}
 
In the setup of Lemma
\ref{lem:truncation}, we discard the points one step 
before reaching $\mathcal E_{N} \mathcal B_{N'}$. 
The next lemma says that we can iterate the map once more and only discard the points upon reaching
$\mathcal E_{N} \mathcal B_{N'}$. Let
$$
\bm T'_{N, N'}(W)  = \min \{ m \geq 1: F^{-m}(W) \subset \mathcal E_N \mathcal B_{N'}\}.
$$

\begin{lemma}
\label{lem:truncation2}
There exists $\Cl{c:intocorr}$
so that for every $K$ and for every large $N, N'$, there is some $\delta'_0 = \delta'_0(N, N', K)$ such that 
the following holds for every standard pair $\ell = (W, \rho)$ with $|W| < \delta'_0$ 
and for all $m =1,2,..., 2Km_0 + 6$
\begin{equation}
\label{eq:Z01est'}
\mathcal Z_1(F^m_*(\ell)|_{W_i: \bm T'_{N, N'}(W_i) > m}) < \Cr{c:intocorr} \mathcal Z_1(\ell)
\end{equation}
\begin{equation}
\label{eq:Z03est'}
\mathcal Z_1(F^{K m_0 + m}_*(\ell)|_{W_i: \bm T'_{N, N'}(W_i) > Km_0 + m}) < 
\Cr{c:intocorr} 2^{-K} \mathcal Z_1(\ell)
\end{equation}
\end{lemma}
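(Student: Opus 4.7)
The plan is to deduce Lemma \ref{lem:truncation2} from Lemma \ref{lem:truncation} by analyzing one additional application of $F$. Every H-component $W_i$ of $F^m_*(\ell)$ with $\bm T'_{N,N'}(W_i) > m$ has preimage $F^{-1}(W_i)$ contained in a unique H-component $\tilde W$ of $F^{m-1}_*(\ell)$. I would split the $W_i$'s into Case (i), those with $W_i \not\subset \mathcal E_N \mathcal B_{N'}$ (equivalently $\bm T_{N,N'}(W_i) > m$), which are directly controlled by Lemma \ref{lem:truncation}, and Case (ii), those with $W_i \subset \mathcal E_N \mathcal B_{N'}$, representing a first visit to the bad set.

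For Case (ii), I would invoke Lemmas \ref{lem:nonper} and \ref{lem:nonper2}: by taking $N, N'$ sufficiently large in terms of $K$, the preceding H-component $\tilde W$ at time $m-1$ must itself satisfy $\bm T_{N,N'}(\tilde W) > m-1$, so Lemma \ref{lem:truncation} bounds the total weighted $\mathcal Z_1$ of such $\tilde W$'s by $\Cr{c:outtacorr}\mathcal Z_1(\ell)$. It then suffices, for each fixed $\tilde W$, to bound the $\mathcal Z_1$ contribution of the H-components of $F(\tilde W)$ landing in $\mathcal E_N \mathcal B_{N'}$ by a constant times $\mathcal Z_1(\tilde W)$. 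For type 3 corridors this is immediate from Lemma \ref{lem:expansion0}. For type 2 corridors, Lemma \ref{lem:expansion} only furnishes a $\mathcal Z_q$ bound for $q<1$, so the smallness of $\delta'_0$ must be used directly: with $|W| < \delta'_0$ and $m \leq 2Km_0+6$, the diameter of $F^m(W)$ is uniformly small in terms of $N, N', K$, so $F(\tilde W)$ meets only finitely many subregions $D^+(h,n)$ with $n \geq N'$, keeping the sum $|\tilde W|^{-1}\sum_n |V_n|/|W_n|$ bounded independently of $N, N', K$. Combining Cases (i) and (ii) yields \eqref{eq:Z01est'}.

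For \eqref{eq:Z03est'}, the same splitting is used with \eqref{eq:Z02est} replacing \eqref{eq:Z01est} as the input. Setting $K' = \lfloor (Km_0+m)/m_0 \rfloor \geq K$ and $r = Km_0+m-K'm_0 < m_0$, apply \eqref{eq:Z02est} at time $K'm_0$ to obtain the bound $2^{-K'}\mathcal Z_1(\ell) \leq 2^{-K}\mathcal Z_1(\ell)$, then iterate $F$ for the remaining $r < m_0$ steps using the Case (i)+(ii) argument above. Each such iteration multiplies the $\mathcal Z_1$ by at most a constant, so the total factor $C^r \leq C^{m_0}$ is bounded independently of $K$, producing \eqref{eq:Z03est'} with $\Cr{c:intocorr}$ independent of $K, N, N'$. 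The main obstacle throughout is Case (ii) for type 2 corridors: the naive $\mathcal Z_1$ summation over all $D^+(h,n)$ with $n \geq N'$ diverges logarithmically, mirroring the need for $q<1$ in Lemma \ref{lem:expansion}, and finiteness is rescued only by choosing $\delta'_0$ small enough that the images under $F^m$ cannot reach deep into the infinite cascade of subregions near a regular boundary point.
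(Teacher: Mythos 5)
Your overall framework matches the paper: you split the H-components into Case (i), which is controlled directly by Lemma \ref{lem:truncation}, and Case (ii), where the last step $F$ lands a piece of $\tilde W$ in $\mathcal E_N\mathcal B_{N'}$. But the key step in Case (ii) is handled incorrectly, and this is the genuinely new content of the lemma.

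The correct observation (which the paper makes) is that the free flight $\tau$ on the piece of $\tilde W$ mapping into $\mathcal E_N\mathcal B_{N'}$ is \emph{uniformly bounded} by a constant depending only on $\mathcal D$. This is because such a point $x\in\tilde W$ cannot have a long free flight in a type 2 or type 3 corridor (otherwise $x$ itself would lie in $\mathcal E_N\mathcal B_{N'}$, contradicting $\bm T_{N,N'}(\tilde W)>m-1$), and it cannot have a long flight in a type 1 corridor either (otherwise $F(x)$ would be near a type 1 boundary point, which is disjoint from $\mathcal E_N\mathcal B_{N'}$). Once $\tau$ is bounded, the finite-horizon one-step bound of \cite{DST13} applies, giving a constant factor $\Cr{c:TtoT'}$ \emph{independent of $N,N'$}. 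Your proposal replaces this with an appeal to Lemma \ref{lem:expansion0} (for type 3) and Lemma \ref{lem:expansion} plus a smallness-of-$\delta_0'$ argument (for type 2); but those lemmas bound the expansion for curves \emph{already contained in} $\mathcal E$ or $\mathcal B$, i.e.\ the long flight \emph{out of} the neighborhood of the boundary point. Here the relevant step is the flight \emph{into} that neighborhood, which is a different step and typically a short flight, so those lemmas are not the right tool.

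There is also a quantifier problem in your type 2 argument: you use $\delta_0'=\delta_0'(N,N',K)$ to keep the sum over $D^+(h,n)$, $n\geq N'$, finite. Any bound you extract this way is likely to depend on $N,N'$, whereas the statement requires a single constant $\Cr{c:intocorr}$ chosen before $N,N'$. The boundedness-of-$\tau$ argument sidesteps this cleanly: the relevant piece of $\tilde W$ has bounded flight and so is cut into a bounded number of H-components, giving an $N,N'$-free bound. Once that ingredient is in place, your treatment of \eqref{eq:Z03est'} by splitting off $K'm_0$ steps and iterating is sound and is essentially what the paper does.
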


\begin{proof}
First we claim that
\begin{equation}
\label{eq:Z03est}
\mathcal Z_1(F^{K m_0 + m}_*(\ell)|_{W_i: \bm T_{N, N'}(W_i) > Km_0 + m}) < 
\Cr{c:outtacorr} 2^{-K} \mathcal Z_1(\ell)
\end{equation}
for
all standard pairs $\ell=(W, \rho)$ with $|W|< (\delta_0)^{3^{Km_0}}$.
Indeed, by Lemma \ref{lemma:upperbdlen}, any H-component $W_i \subset F^{Km_0}(W)$
satisfies $|W_i|< \delta_0$. By applying \eqref{eq:Z01est} to the 
H-components $W_i \subset F^{Km_0}(W)$, \eqref{eq:Z01est} and \eqref{eq:Z02est} imply \eqref{eq:Z03est}.

To derive \eqref{eq:Z01est'}, we write
$$
\mathcal Z_1(F^m_*(\ell)|_{W_i: \bm T'_{N, N'}(W_i) > m})
$$
$$
= \mathcal Z_1(F^m_*(\ell)|_{W_i: \bm T_{N, N'}(W_i) > m}) + 
\mathcal Z_1(F^m_*(\ell)|_{W_i: W_i \subset \mathcal E_{N} \mathcal B_{N'}, \bm T_{N, N'}(W_i) > m}) 
=:Z_{11} + Z_{12}
$$
By \eqref{eq:Z01est}, $Z_{11} \leq \Cr{c:outtacorr} \mathcal Z_1(\ell)$.
Let us write $j \in \mathcal J$ if the 
H-component $W_{j,m-1} \subset F^{m-1}(W)$ contains a point $x \in W_{j,m-1}$ with
$F(x) \in \mathcal E_{N} \mathcal B_{N'}$
and $\bm T_{N, N'}(W_j) > m -1$. Also write $\ell_j = (W_j, \rho_{j,m-1})$.
Choosing $\delta'_0 \leq (\tilde \delta)^{3^{2(Km_0 + 1)}}$ for some $\tilde \delta \leq \delta_0$,
we have $|W_j| \leq \tilde \delta$.

We claim that there is some $\tilde \delta < \delta_0 $ and $\Cl{c:TtoT'}$ so that 
\begin{equation}
\label{eq:TtoT'}
\mathcal Z_1(F_*(\ell_j)) \leq \Cr{c:TtoT'} \mathcal Z_1(\ell_j) \text{ for all }j \in \mathcal J.
\end{equation}

To prove \eqref{eq:TtoT'}, we first claim that there is a constant $C$ only
depending on $\mathcal D$ so that for any $N > \Cr{N:N(K)type3}(1)$ and 
$N' > \Cr{N:N(K)type2}(1)$ fixed, and for any 
$x \in \ell_j$, $\tau(x) < C$. 
Indeed, it is not possible for $x$ to have a long flight in a type 2 or type 3 corridor 
since $x \notin \mathcal E_{N} \mathcal B_{N'}$. 
Let $x$ be so that 
$F(x) \in \mathcal E_{N} \mathcal B_{N'}$.
Then it is also not possible for $x$ to have a long flight
in a type 1 corridor, because in this case $F(x)$ would be close to a boundary point of a type 1 corridor
and we could not have $F(x) \in \mathcal E_{N} \mathcal B_{N'}$. Thus $\tau(x) < C$. This estimate can be extended to all $x \in \ell_j$ by choosing
$\tilde \delta < \delta_0$ (for example, smaller than half of the smallest distance between two distinct points in 
$A = \cup_{corridors} A_H$). Now since the free flight function on $\ell_j$ is uniformly bounded,
\eqref{eq:TtoT'} follows from \cite{DST13}.

We conclude
$$
Z_{12} \leq \sum_{j \in \mathcal J}c_j \mathcal Z_1(F_*(\ell_j)) \leq \Cr{c:TtoT'} 
\sum_{j \in \mathcal J}c_j \mathcal Z_1(\ell_j)$$
$$ \leq \Cr{c:TtoT'} \mathcal Z_1(F^{m-1}_*(\ell)|_{W_j: \bm T_{N, N'}(W_j) > m-1})
\leq \Cr{c:TtoT'} \Cr{c:outtacorr} \mathcal Z_1(\ell).
$$
Thus \eqref{eq:Z01est'} follows with $\Cr{c:intocorr} = \Cr{c:outtacorr} (1+ \Cr{c:TtoT'})$. The derivation of \eqref{eq:Z03est'} 
from \eqref{eq:Z03est} is similar.

\end{proof}

Recall that two long flights are possible in a type 2 corridor, right after one another
or separated by exactly one short flight. 
Our next lemma says that the $\mathcal Z_q$ function can be controlled throughout 
the course of these two long flights.

\begin{lemma}
\label{lem:3steps}
There is some $\Cl{c:threesteps}$ so that
for any standard pair $\tilde \ell = (\tilde W, \tilde \rho)$ with 
$\tilde W \subset \mathcal E_{\Cr{N:N(K)type3}(4)} \mathcal B_{\Cr{N:N(K)type2}(4)}$,
$$
\mathcal Z_q(F^3_* (\tilde \ell)) \leq \Cr{c:threesteps} \mathcal Z_q(\tilde \ell).
$$
\end{lemma}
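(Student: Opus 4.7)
The proof plan is to iterate $F$ three times, and at each iterate, on each H-component, invoke Lemma \ref{lem:expansion0} or Lemma \ref{lem:expansion} when the H-component lies in $\mathcal E_{\Cr{N:N(K)type3}(4)} \mathcal B_{\Cr{N:N(K)type2}(4)}$, the bounded-flight estimate from \cite{DST13} (already used in the derivation of \eqref{eq:TtoT'}) when the H-component is disjoint from $\mathcal E \cup \mathcal B$ and has uniformly bounded free flight, and a one-step $\mathcal Z_q$ growth bound (with $q<1$, proved analogously to Lemma \ref{lem:expansion}) when the H-component happens to sit near a type-1 boundary point. The combinatorial engine will be Lemmas \ref{lem:nonper} and \ref{lem:nonper2} applied with $K=4$, which together rule out three long flights within any orbit segment of length four starting inside $\mathcal E_{\Cr{N:N(K)type3}(4)} \mathcal B_{\Cr{N:N(K)type2}(4)}$.

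I would first upgrade Lemma \ref{lem:expansion0} from $\mathcal Z_1$ to $\mathcal Z_q$ for $q<1$: with $|V_n| \approx n^{-2}$, $|W_n| \approx n^{-1}$ and the finite range $n_W, \ldots, \Cr{c:star} n_W$ supplied by Lemma \ref{lem:geomcornercorr}, the estimate $\sum_n (|V_n|/|W|) |W_n|^{-q} \approx |W|^{-q}$ is immediate, and \eqref{eq:densityosc}, \eqref{eq:Zas} convert it into a $\mathcal Z_q$ bound. This yields a factor $\max(\Cr{C:type3factor}, \Cr{C:type2factor})$ at the first iterate in every subcase, namely $\tilde W \subset \mathcal B$ near a regular type-2 boundary point, $\tilde W \subset \mathcal E$ near a corner type-3 boundary point, or $\tilde W \subset \mathcal E$ near a corner type-2 boundary point.

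I would then split $F_*(\tilde \ell)$ into standard pairs supported on individual H-components and treat each separately. By Lemma \ref{lem:nonper} none of them intersects $\mathcal E_{\Cr{N:N(K)type3}(4)}$, and by Lemma \ref{lem:nonper2} an H-component can lie in $\mathcal B_{\Cr{N:N(K)type2}(4)}$ only in the type-2-corner subcase, and even then at most one of the next two steps (never both) returns it to $\mathcal B_{\Cr{N:N(K)type2}(4)}$. On H-components that do lie in $\mathcal B_{\Cr{N:N(K)type2}(4)}$ I apply Lemma \ref{lem:expansion} for another factor $\Cr{C:type2factor}$; on all others the bounded per-step estimate described above applies. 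Running the same dichotomy at the third iterate on every H-component of $F^2_*(\tilde\ell)$ and multiplying the three bounded factors yields the claim with some concrete $\Cr{c:threesteps}$ built out of $\max(\Cr{C:type3factor}, \Cr{C:type2factor})$, $\Cr{C:type2factor}$ and the short-flight per-step constant.

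The only serious obstacle is the structural one of preventing three long flights in a row, which is precisely what Lemmas \ref{lem:nonper} and \ref{lem:nonper2} deliver, and is also the reason those two lemmas are formulated with $K=4$ rather than a smaller value. Aside from this, the argument is a straightforward per-iterate bookkeeping combined with the $\mathcal Z_q$ variant of Lemma \ref{lem:expansion0} noted above and the observation that being disjoint from $\mathcal E_{\Cr{N:N(K)type3}(4)} \mathcal B_{\Cr{N:N(K)type2}(4)}$ already precludes a long flight in type 2 or type 3 corridors on the H-component in question.
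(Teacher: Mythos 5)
Your proposal is essentially correct and follows the same blueprint as the paper: split into cases according to which part of $\mathcal E_{\Cr{N:N(K)type3}(4)}\mathcal B_{\Cr{N:N(K)type2}(4)}$ the initial curve lies in, use Lemma~\ref{lem:expansion0} or Lemma~\ref{lem:expansion} to absorb the initial long flight, invoke Lemma~\ref{lem:nonper2} to rule out further visits to $\mathcal E\cup\mathcal B$ in the remaining iterates, and control the remaining two (or one) steps with a per-step bounded-flight constant as in~\eqref{eq:TtoT'}. The one place where you diverge is bookkeeping: you propose to upgrade Lemma~\ref{lem:expansion0} to a $\mathcal Z_q$ bound directly before iterating, whereas the paper works with $\mathcal Z_1$ along the way and converts to $\mathcal Z_q$ only at the end via Jensen's inequality~\eqref{eq:Jensen} together with~\eqref{defbrc}. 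Both routes work. Your $\mathcal Z_q$ computation for Lemma~\ref{lem:expansion0} is fine in the type-3 corner case thanks to the finite range from Lemma~\ref{lem:geomcornercorr}, but in the type-2-corner case the pieces $W_n$ are further subdivided by secondary singularities, so it is not quite the ``immediate'' two-line estimate you describe; it requires the argument of~\cite[Remark~5.59]{CM06}, exactly as in the paper's $\mathcal Z_1$ version. Your mention of a separate one-step $\mathcal Z_q$ bound near type-1 boundary points is a sound precaution: after the first long flight the landing is non-grazing (by (A2)), so the type-1 issue cannot arise at the second iterate, but it is not a priori excluded at the third; the paper absorbs this possibility into the constant from~\eqref{eq:TtoT'} without elaboration, so your extra care is harmless and arguably clarifies the argument. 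Finally, note one small misstatement: Lemma~\ref{lem:nonper2} gives $\mathfrak k = \{1\}$ or $\mathfrak k = \{2\}$ (or empty), i.e.\ the return to $\mathcal B$ happens at step~1 or step~2 exclusively, not ``at most one of the next two steps'' in a way that allows $\mathfrak k = \{1,2\}$; your decomposition should follow the paper's split of $F(\tilde W)$ into $I_1$, $I_2$, $I_3$ according to whether the return occurs at step~1, step~2, or not at all.
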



We finish the proof of the theorem first and then 
will prove Lemma \ref{lem:3steps}. First, we fix a large constant $K$ so that

\begin{equation}
\label{eq:Kcondition}
\Cr{c:intocorr} \bar c ^4 2^{-qK} (1 + (2Km_0) \Cr{c:threesteps} \Cr{c:intocorr} \bar c ^4) 
< \frac12
\end{equation}
holds.
Next, we choose $M = 2Km_0 +6$. Then, we choose 
$\tilde N = \Cr{N:N(K)type3}(M)$, $\tilde N' = \Cr{N:N(K)type2}(M)$. Note that
by Lemma \ref{lem:nonper2}, for every $ x \in \mathcal M$ there exists 
$m =0,...,M$ so that 
\begin{equation}
\label{eq:1visit}
\{ x, F(x), ..., F^M(x) \} \cap \mathcal E_{\tilde N}\mathcal B_{\tilde N'} 
\subset \{ m, m+1, m+2\}.
\end{equation}


Finally, 
we fix $\delta'_0 = \delta'_0(\tilde N, \tilde N', K)$ as given by Lemma \ref{lem:truncation2}. 
Then we choose $\delta_1$ so small that for any $W$ with $|W| < \delta_1$, for any $m=1,...,M$, any H-component of $F^m(W)$
is shorter than $\delta'_0$ (e.g., $\delta_1= (\delta'_0)^{3^M}$ works by Lemma \ref{lemma:upperbdlen}). 

We are going to prove Theorem \ref{thm2} with $\varkappa = 1/2$, $\delta = \delta_1$ and $M$ as chosen above.

Note that all standard pairs in the proof are shorter
than $\delta_0'$. Thus, by further reducing $\delta_0'$ if necessary,
we can assume that any standard pair intersecting
$\mathcal E_{\tilde N}\mathcal B_{\tilde N'}$ is fully contained
in $\mathcal E_{\tilde N-1}\mathcal B_{\tilde N'-1}$. To simplify
notations, we will assume that once a standard pair intersects
$\mathcal E_{\tilde N}\mathcal B_{\tilde N'}$, it is also fully 
contained in $\mathcal E_{\tilde N}\mathcal B_{\tilde N'}$.

Let us fix a standard pair $\ell = (W, \rho)$ with $|W| < \delta_1$, let $W_{i,m}$ denote an H-component
of $F^m(W)$ and write
$F^M_*(\ell) = \sum_i c_{i,M} \ell_i$, where $\ell_i = (W_{i,M}, \rho_{i,M})$.
The idea of the proof is now the following. Let the time of the first visit to 
$\mathcal E_{\tilde N}\mathcal B_{\tilde N'}$ be $m$. Then no visit
to $\mathcal E_{\tilde N}\mathcal B_{\tilde N'}$ is possible any time
after $m+2$ by Lemma \ref{lem:nonper2}.
If
$m \leq M/2$, then $M  - m - 3> Km_0$ and so we can use 
\eqref{eq:Z03est'} after the last visit to show that the $Z$ function does not grow. Likewise, if 
$m > M/2$, we will use \eqref{eq:Z03est'} at time zero (before the visit). 

To make this idea precise, for all standard pairs $\ell_{i,M}$ we associate 
a set $\mathcal T_{i}$
of integers so that for any $x \in F^{-M} (W_{i,M})$, we have
$F^k(x) \in \mathcal E_{\tilde N}\mathcal B_{\tilde N'}$ for $k=0,1,...,M-1$
if and only if $k \in \mathcal T_i$. By Lemma
\ref{lem:nonper2}, all associated sets $\mathcal T$ can contain up to 2 
integers. Furthermore, if $\mathcal T$ contains exactly 2 numbers, then
$\mathcal T = \{ m, m+1 \}$ or $\mathcal T = \{ m, m+2 \}$ for some 
$m=0,...,M-1$. We have 
$\mathcal Z_q(F^M_*(\ell)) = \sum_i c_{i,M} \mathcal Z_q(\ell_{i,M})$. Now let
$$ Z_m = \mathcal Z_q(F^M_*(\ell) |_{W_i: \min \mathcal T_i =m})$$
and 
$$Z_{M+} = \mathcal Z_q(F^M_*(\ell) |_{W_i:\mathcal T_i =\emptyset}) =
\mathcal Z_q(F^M_*(\ell)|_{W_i: \bm T'_{\tilde N, \tilde N'}(W_i) > M}).
$$
Clearly, 
we have
\begin{equation}
\label{eq:Zmdec}
\mathcal Z_q(F^M_*(\ell)) = \left[ \sum_{m=0}^{M-1} Z_m \right]+ Z_{M+}.
\end{equation}

Given 
a substandard family
$\mathcal G = 
(\ell_{\alpha} = (W_{\alpha}, \rho_{\alpha})_{\alpha \in \mathbb N}, \lambda)$ (
recall that substandard means 
$s = \sum_{\alpha=1}^{\infty} \lambda_{\alpha} \leq 1$), we have
\begin{align}
\mathcal Z_q(\mathcal G) = \sum_{\alpha=1}^{\infty} \lambda_{\alpha} \mathcal Z_q(\ell_{\alpha})
& \leq \bar c s  \sum_{\alpha=1}^{\infty} \frac{\lambda_{\alpha}}{s} \frac{1}{|W_{\alpha}|^q} \nonumber \\
& \leq  \bar c s \left[ 
 \sum_{\alpha=1}^{\infty} \frac{\lambda_{\alpha}}{s} \frac{1}{|W_{\alpha}|}
\right]^q 
\leq \bar c^2 s^{1 - q} [\mathcal Z_1(\mathcal G)]^q \nonumber \\
&\leq
 \bar c^2  [\mathcal Z_1(\mathcal G)]^q, \label{eq:Jensen}
\end{align}
where we used \eqref{defbrc} in the first two lines and Jensen's inequality in the second line.
Now combining \eqref{eq:Z03est'} with \eqref{eq:Jensen}, we find
\begin{equation}
\label{eq:ZM+}
Z_{M+} \leq \bar c^2 [\Cr{c:intocorr} 2^{-K} \mathcal Z_1(\ell)]^q
\leq \bar c^2 \left[\Cr{c:intocorr} 2^{-K} \bar c \frac{1}{|W|} \right]^q \leq \Cr{c:intocorr} \bar c^4 2^{-qK} \mathcal Z_q(\ell).
\end{equation}

Next, fix some $m \in [0, M/2]$ and consider the substandard family
$$\mathcal G_{m} = (\ell_{i,m} = (W_{i,m}, \rho_{i,m})_{i\in \mathcal I_m}, \lambda_i = c_{i,m}),$$
where $i \in \mathcal I_m$ if
$$
W_{i,m} \in \mathcal E_{\tilde N} \mathcal B_{\tilde N'}, \bm T'_{\tilde N, \tilde N'}(W_{i, m}) > m.
$$
Note that $\mathcal G_{m}$
corresponds to the image under $F^{m}$ of the points in $W$ whose first hitting
time of the set $\mathcal E_{\tilde N} \mathcal B_{\tilde N'}$ is exactly $m$
and so
$$Z_m = \mathcal Z_q (F_*^{M-m}(\mathcal G_{m})).$$
By \eqref{eq:Z01est'},
$$
\mathcal Z_1(\mathcal G_{M-m}) \leq \Cr{c:intocorr} \mathcal Z_1 (\ell).
$$
Now using \eqref{eq:Jensen} we compute as in \eqref{eq:ZM+} that
\begin{equation}
\label{eq:largem0}
\mathcal Z_q(\mathcal G_{m}) \leq  \bar c^2  [\mathcal Z_1(\mathcal G_{m})]^q
\leq \bar c^2 [\Cr{c:intocorr} \mathcal Z_1(\ell)]^q \leq \Cr{c:intocorr} \bar c^4 \mathcal Z_q(\ell).
\end{equation}
Now fix some $\ell_{i,m} = (W_{i,m}, \rho_{i,m}) \in \mathcal G_{m}$.  
By Lemma \ref{lem:3steps}, we have
\begin{equation}
\label{eq:largem1}
\mathcal Z_q (F^3_*(\ell_{i,m})) \leq \Cr{c:threesteps} 
\mathcal Z_q (\ell_{i,m}).
\end{equation}
Now fix any $W_{j, m+3} \in F^3(W_{i,m})$.  By \eqref{eq:Jensen},
$$
\mathcal Z_q (F^{M-m-3}_*(\ell_{j,m+3})) \leq \bar c^2 
[\mathcal Z_1 (F^{M-m-3}_*(\ell_{j,m+3})) ]^q.
$$
By
\eqref{eq:1visit}, no points of $W_{j, m+3}$ can visit 
$\mathcal E_{\tilde N}\mathcal B_{\tilde N'}$ for $M-m-3$ iterations.
Combining this observation with
the fact that 
$M-m-3 \geq K m_0$ and with \eqref{eq:Z03est'}, we find
\begin{equation}
\label{eq:largem2}
\mathcal Z_q (F^{M-m-3}_*(\ell_{j,m+3})) \leq \bar c^2  
[
\Cr{c:intocorr} 2^{-K} \mathcal Z_1(\ell_{j,m+3}))
]^q
\leq \Cr{c:intocorr} \bar c^4 2^{-qK} \mathcal Z_q(\ell_{j,m+3}).
\end{equation}
Now combining \eqref{eq:largem1} and \eqref{eq:largem2} we find
$$
Z_m = \mathcal Z_q(F^{M-m}_*( \mathcal G_{m})) \leq \Cr{c:threesteps} \Cr{c:intocorr} \bar c^4 2^{-qK}  \mathcal Z_q(\mathcal G_{m}),
$$
and so by \eqref{eq:largem0}
\begin{equation}
\label{eq:largem4}
Z_m \leq \Cr{c:threesteps} (\Cr{c:intocorr})^2 \bar c^8 2^{-qK} \mathcal Z_q(\ell).
\end{equation}

Finally, let us consider the case $m \in [M/2+1, M]$ and define $\mathcal G_{m}$ as before.
Since $m > Km_0$, 
we have by \eqref{eq:Z03est'} that
$$
\mathcal Z_1(\mathcal G_{m}) \leq \Cr{c:intocorr} 2^{-K} \mathcal Z_1 (\ell),
$$
and so by \eqref{eq:Jensen},
$$
\mathcal Z_q(\mathcal G_{m}) \leq \Cr{c:intocorr} 2^{-qK} \bar c ^4 \mathcal Z_q (\ell).
$$
Now Lemma \ref{lem:3steps} implies
$$
\mathcal Z_q(F^3_*(\mathcal G_{m})) \leq \Cr{c:threesteps} \mathcal Z_q(\mathcal G_{M-m}) .
$$
Finally, for any $\ell_j \in F^3_*( \mathcal G_{m})$, we combine \eqref{eq:Jensen},
\eqref{eq:1visit} and \eqref{eq:Z01est'}
to conclude
$$
\mathcal Z_q (F_*^{M-m-3}(\ell_j)) \leq \bar c^2 [\mathcal Z_1 (F_*^{M-m-3}(\ell_j))]^q
\leq \bar c^2 [\Cr{c:intocorr} \mathcal Z_1 (\ell_j)]^q 
\leq \Cr{c:intocorr} \bar c^4 \mathcal Z_q (\ell_j).
$$ 
Combining the last three displayed inequalities, we obtain
\begin{equation}
\label{eq:smallm}
Z_m \leq \Cr{c:threesteps} (\Cr{c:intocorr})^2 \bar c^8 2^{-qK} \mathcal Z_q(\ell).
\end{equation}

Now we substite the estimates 
\eqref{eq:ZM+},
\eqref{eq:largem4} and 
\eqref{eq:smallm}
into \eqref{eq:Zmdec}
to conclude
\begin{equation}
\label{eq:defK}
\mathcal Z_q(F^M_*(\ell)) 
 \leq \Cr{c:intocorr} \bar c ^4 2^{-qK} (1 + (2Km_0) \Cr{c:threesteps} \Cr{c:intocorr} \bar c ^4) 
\mathcal Z_q(\ell).
\end{equation}
The right hand side of \eqref{eq:defK} is bounded by 
$ \mathcal Z_q(\ell) / 2$ by
\eqref{eq:Kcondition}. 
This completes the proof of Theorem \ref{thm2} assuming Lemma \ref{lem:3steps}. In the remaining part of this
section, we give a proof of this lemma.

\begin{proof}[Proof of Lemma \ref{lem:3steps}]
Let us write $\Cr{N:N(K)type2}= \Cr{N:N(K)type2}(4)$ and 
$\Cr{N:N(K)type3}= \Cr{N:N(K)type3}(4)$.
We will distinguish three cases.

{\bf Case 1}: $\tilde W \subset \mathcal B_{\Cr{N:N(K)type2}}$. 
By Lemma
\ref{lem:expansion}, we have
$$
\mathcal Z_q(F_* (\tilde \ell)) < \Cr{C:type2factor} \mathcal Z_q (\tilde \ell)
$$
and by Lemma \ref{lem:nonper2}, we have
\begin{equation}
\label{eq:nosecondreturn}
(F(\tilde W) \cup F^2(\tilde W)) \cap (\mathcal E_{\Cr{N:N(K)type3}} \mathcal B_{\Cr{N:N(K)type2}})= 
\emptyset.
\end{equation}
Consequently, as in
\eqref{eq:TtoT'},
for any standard pair $\tilde \ell' = (\tilde W', \tilde \rho')$ in the standard family $F_*(\tilde \ell)$, we have
\begin{equation}
\label{eq:ZF^2}
\mathcal Z_1(F^2_*(\tilde \ell')) \leq \Cr{c:TtoT'}^2 \mathcal Z_1(\tilde \ell').
\end{equation}
Using \eqref{eq:Jensen}, we conclude
$$
\mathcal Z_q(F^3_* (\tilde \ell)) \leq \bar c^4 \Cr{C:type2factor} \Cr{c:TtoT'}^2 \mathcal Z_q (\tilde \ell).
$$

{\bf Case 2}: $\tilde W \subset \mathcal E(x_h, \Cr{N:N(K)type3})$ for some type 3 boundary point $x_h$.
By Lemma \ref{lem:expansion0}, we have
\begin{equation}
\label{eq:type3firstexp}
\mathcal Z_1(F_* (\tilde \ell)) \leq \Cr{C:type3factor} \mathcal Z_1 (\tilde \ell).
\end{equation}
As in case 1, \eqref{eq:nosecondreturn} and \eqref{eq:ZF^2} hold. Thus
$
\mathcal Z_1(F^3_* (\tilde \ell)) \leq  \Cr{c:TtoT'}^2  \Cr{C:type3factor} \mathcal Z_1 (\tilde \ell)
$
and so 
$$
\mathcal Z_q(F^3_* (\tilde \ell)) \leq \bar c^4 \Cr{c:TtoT'}^2  \Cr{C:type3factor} \mathcal Z_q (\tilde \ell).
$$

{\bf Case 3}: $\tilde W \subset \mathcal E(x_h, \Cr{N:N(K)type3})$ for some type 2 boundary point $x_h$.
As in case 2, \eqref{eq:type3firstexp} holds.
By Lemma \ref{lem:flightgrowth} and by the choice of ${\Cr{N:N(K)type2}}$, we can write 
$$
F(\tilde W) = \left( \cup_{i \in I_1} \tilde W_i \right) \cup 
\left( \cup_{i \in I_2} \tilde W_i \right)
\cup 
\left( \cup_{i \in I_3} \tilde W_i \right),
$$
where 
$\tilde W_i \subset \mathcal B_{\Cr{N:N(K)type2}}$ for all $i \in I_1$, 
$F(\tilde W_i) \subset \mathcal B_{\Cr{N:N(K)type2}}$ for all $i \in I_2$
and 
$(\tilde W_i \cup F(\tilde W_i)) \cap (\mathcal E_{\Cr{N:N(K)type3}} \mathcal B_{\Cr{N:N(K)type2}})= 
\emptyset$ for all $i \in I_3$.
As in case 1, we derive
$$
\mathcal Z_q(F^2_* (\tilde \ell_i)) < \bar c^4 \Cr{C:type2factor} \Cr{c:TtoT'}^2 \mathcal Z_q (\tilde \ell_i)
$$
for all $i \in I_1$. For $i \in I_2$, we have 
$$
\mathcal Z_1(F_*( \tilde \ell_i)) \leq \Cr{c:TtoT'} \mathcal Z_1( \tilde \ell_i)
$$
as in \eqref{eq:TtoT'}.
Next, for any $\tilde \ell_{i,j} \in F_*(\tilde \ell_i)$ with $i \in I_2$, 
$$
\mathcal Z_q(F_* ( \tilde \ell_{i,j})) < \Cr{C:type2factor} \mathcal Z_q ( \tilde \ell_{i,j})
$$
by Lemma
\ref{lem:expansion}.
Finally, for $i \in I_3$, we have 
$$
\mathcal Z_1(F^2_*( \tilde \ell_i)) \leq \Cr{c:TtoT'}^2 \mathcal Z_1( \tilde \ell_i)
$$
as in \eqref{eq:TtoT'}.

Combining the above estimates, we obtain
$$
\mathcal Z_q(F^3_* (\tilde \ell)) \leq
\bar c ^6 \Cr{C:type3factor} [ 3 \Cr{C:type2factor} \Cr{c:TtoT'}^2 ]
\mathcal Z_q (\tilde \ell).
$$
The lemma follows with 
$\Cr{c:threesteps}
=3 \bar c ^6 \Cr{C:type3factor}  \Cr{C:type2factor} \Cr{c:TtoT'}^2 
$.

\end{proof}


\section{Proof of Lemma \ref{lem:nonper}}
\label{sec:lemma9}

We can assume $\Cr{N:N(K)type3} \geq \Cr{n:type2}$.
Let 
$$
B= \cup_{H \text{ type 3 corridors}} A_H.
$$
First we prove the lemma under simplifying assumptions and then we proceed to more general cases. All important ideas appear in the simplest Case 1, but we need to 
consider several other cases to allow for the singular behavior of the orbit of $B$.\\

{\bf Case 1: No type 2 corridors and
$B \cap R_{-\infty, -2}^{\mathbb H} = \emptyset$.} 
Note that the second assumption means that for
any $x \in B$
and for all $n \geq 1$, $F^n$ is continuous at $F^2(x)$
(recall that the configurational component of $F(x)$ for $x \in B$ is always 
a corner point by definition \eqref{eq:criticalorbit}).

Since the sets $E^+(h,n)$ are disjoint and $B$ is finite, we can find some $\bar N$ so that 
\begin{equation}
\label{defbarN}
\{ F^k(x): k=1,...,K, x \in B \} \cap  \mathcal E_{\bar N} =  \emptyset.
\end{equation}

Assume by contradiction that for all $\Cr{N:N(K)type3} \geq \Cr{n:type2}$ 
there is some point $(p_0, \psi_0)$ so that
\begin{equation}
\label{defp0}
(p_0, \psi_0) \in \mathcal E_{\Cr{N:N(K)type3}}, 
(p_i, \psi_i) = F^{i}(p_0, \psi_0):
(p_{\bar k}, \psi_{\bar k}) \in \mathcal E_{\Cr{N:N(K)type3}} \text{ for some }\bar k = 1,...,K.
\end{equation}
As we will see later, 
$p_i$ is not a corner point for any $i=1,...,\bar k$, so the points
$(p_i, \psi_i)$ are uniquely defined.
By passing to a subsequence of positive integers $\Cr{N:N(K)type3}$,
we can assume that $\bar k$ does not depend on $\Cr{N:N(K)type3}$.

Recall Figure \ref{fig1}. Without loss of generality and passing to a 
further subsequence, we can assume that
$(p_0, \psi_0)$ is in a small neighborhood of 
$(r_{H,r,1}, \varphi_{H,r,1})$ (the other 3 cases are similar). Then
the free flight emanating from $(p_0, \psi_0)$ is almost horizontal 
and is in the northeast 
direction. Consequently, $(p_1, \psi_1)$ is in a small neighborhood of 
$(r_{H,l,2}, -\varphi_{H,l,2}) = F(r_{H,l,1}, \varphi_{H,l,1})$
(also recall \eqref{eq:criticalorbit}). 
Now let $W \subset \mathcal M$ 
be the line segment between the points 
$(r_{H,l,2}, -\varphi_{H,l,2})$ and $(p_1, \psi_1)$. 
We record for later usage that
the tangent of $W$ satisfies 
$d \varphi / dr \geq 0$. 
Indeed, this follows from the convexity of $\Gamma_{2,2}$.
Also note that $d \varphi / dr = \infty$ is possible
as we can have $p_1 = r_{H,l,2}$. 
Although $W$ may not be an
unstable curve, $F^i(W)$ will be an unstable curve for all 
$i \geq 1$ by the definition of unstable cones.



By the second assumption of Case 1, we can find a small $\varepsilon$
so that $F^{K}$ is continuous on the $\varepsilon$ neighborhood
of $F(r_{H,l,2}, -\varphi_{H,l,2}) $. 
Next observe that there is some $\hat N$
so that 
$F^2(\mathcal E((r_{H,r,1}, \varphi_{H,r,1}),\hat N))$
is a subset of the $\varepsilon$ neighborhood
of $F(r_{H,l,2}, -\varphi_{H,l,2})$. 
Now we choose $\Cr{N:N(K)type3} > \max \{ \bar N, \hat N \}$.
Since $\Cr{N:N(K)type3} > \hat N$, $F^{K}(W)$
is a connected homogeneous unstable curve, that is, the trajectory
of $W$ avoids the singularity set $R_0^{\mathbb H}$ up to $K$ 
iterations.
Since $(p_{\bar k}, \psi_{\bar k}) \in F^{\bar k -1}(W)
\cap \mathcal E_{\Cr{N:N(K)type3}}$
and $F^{\bar k}(W)$ is homogeneous, we also have
$F^{\bar k -1}(W)\subset \mathcal E_{\Cr{N:N(K)type3}}$. Thus
$$
F^{\bar k}(r_{H,l,1}, \varphi_{H,l,1}) =
F^{\bar k -1} (r_{H,l,2}, -\varphi_{H,l,2})
\in 
F^{\bar k -1}(W)\subset \mathcal E_{\Cr{N:N(K)type3}}.$$ 
Since 
$\Cr{N:N(K)type3} > \bar N$, this is a contradiction with the choice of $\bar N$. \\

{\bf Case 2 No type 2 corridors and for all $x \in B$, and for all $n \geq 2$, $\Pi_{\mathcal D} F^n(x)$ is a regular point.}
The second assumption means that the future trajectory of points
in $B$, after the first collision, can contain grazing collisions but no
corner points.

We use the same idea as in the proof of
Case 1 with the main difference that we need to study the flow
instead of the map as the latter may not be continuous at 
$F(r_{H,l,2}, -\varphi_{H,l,2}) $. To derive the contradiction,
we need to consider possibly different collision times $\bar k$
and $\hat k$ of the points $(p_0, \psi_0)$ and $(r_{H,l,1}, \varphi_{H,l,1})$.

First, we update the definition of $\bar N$ by replacing $k=1,...,K$
by $k=1,...,L$ in \eqref{defbarN}, where $L$ is a 
constant only depending on $\mathcal D$ and $K$ so that for any $x \in B$,
the first $L$ collisions in the future contain at least $K+1$ non-grazing collisions:
\begin{equation}
\label{defbarN2}
\{ F^k(x): k=1,...,L, x \in B \} \cap  \mathcal E_{\bar N} =  \emptyset.
\end{equation}
To prove that such an $L$ exists, observe that for a given $K$ there is a $\delta >0$
so that the orbit of $B$ up to $K$ collisions avoids the $\delta$ neighborhood
of the type 1 boundary points (as all type 1 boundary points are invariant
under the billiard map). Thus all free flight before the next $K$
collisions is bounded by some constant $\tau_K$ on $B$. Also note that two grazing collisions are necessarily
separated by a time $\tau_*$ (this is obvious in case the collisions happen on 
different scatterers, and by \cite{C99}, corner sequences can only contain one
grazing collision). Thus we can choose $L = (K+1)\tau_K/\tau_*$.

Next we claim that there is some $\varepsilon$ so that
\begin{equation}
\label{defeps}
dist(\cup_{k=1}^L F^k(B), \mathcal E_{\bar N}) > \varepsilon.
\end{equation}
To prove \eqref{defeps}, first note that for every $x \in B$,
$F^{k}(x)$ cannot be in $\mathcal E_{\bar N}$ by the definition 
of $\bar N$. Furthermore, 
$F^{k}(x) \notin \partial \mathcal E_{\bar N}$ for $x \in B$
because otherwise 
$\Pi_{\mathcal D}F^{k}(x)$ would be a corner point, contradicting the assumptions of Case 2. Thus the desired 
$\varepsilon >0$ exists.

Since the forward orbit 
of
$F(r_{H,l,2}, -\varphi_{H,l,2})$
only contains regular points, for any $t \geq 0$,
the flow $\Phi^t$ is continuous
at the point $F(r_{H,l,2}, -\varphi_{H,l,2}) $ (recall that $\mathcal M$ 
is identified with a subset of $\Omega$). The continuity of the flow
at points whose orbit avoids corner points follows from \cite[Exercise 2.27]{CM06}.

Let $\mathcal T_k = \sum_{\ell=0}^{k -1} \tau(F^\ell(r_{H,l,2}, -\varphi_{H,l,2}))$.
Let $\mathcal P$ be the configurational component of the
trajectory of $F(r_{H,l,2}, -\varphi_{H,l,2})$
under the flow in time $\mathcal T_L$.
Let $\bar \varepsilon \in (0, \varepsilon)$ be so small that any 
trajectory of the flow up to time $\mathcal T_L$ that stays
$\bar \varepsilon$ close to $\mathcal P$ can only have a collision
with angle $\varphi$, $\cos (\varphi) > \bar \varepsilon$ whenever 
$\bar \varepsilon$ close to a non-grazing collision of $\mathcal P$.

Next we claim the following. There is some $\hat N$ large enough
so that for any $\Cr{N:N(K)type3} > \hat N$ and for any given $(p_0, \psi_0)$
satisfying \eqref{defp0} we can find $\hat k \leq L$ so that
$(p_{\bar k}, \psi_{\bar k})$ is $\bar \varepsilon$
close to $F^{\hat k}(r_{H,l,2}, -\varphi_{H,l,2})$.
To prove this claim, observe that 
the continuity of $\Phi^{\mathcal T_L}$
at $F(r_{H,l,2}, -\varphi_{H,l,2})$ implies the existence of $\hat N$
so that if $(p_0, \psi_0) \in \mathcal E_{\hat N}$,
and so $(p_2, \psi_2)$ is close to 
$F(r_{H,l,2}, -\varphi_{H,l,2})$, then the trajectories of these
two points remain
$\bar \varepsilon$ close up to $\mathcal T_L$ flow time. It remains
to define $\hat k$ as the number of 
collisions in the orbit
of $F(r_{H,l,2}, -\varphi_{H,l,2})$ before flow time $\mathcal U + 
\bar \varepsilon$, where $\mathcal U = 
\sum_{\ell=1}^{\bar k -1} \tau(p_\ell, \psi_\ell)$.
By construction, $F(r_{H,l,2}, -\varphi_{H,l,2})$ experiences 
at most $\bar k$ non-grazing collisions before flow time $\mathcal U + \bar \varepsilon$ and so 
$\hat k \leq L$ holds.

We choose $\Cr{N:N(K)type3}> \max \{ \bar N, \hat N\}$ as before. 
We derived that $(p_{\bar k}, \psi_{\bar k})$ is $\bar \varepsilon$
close to $F^{\hat k}(r_{H,l,2}, -\varphi_{H,l,2})$. Now recall from \eqref{defeps} that
$F^{\hat k}(r_{H,l,2}, -\varphi_{H,l,2})$ is not in the 
$\varepsilon$ neighborhood of $\mathcal E_{\bar N}$. This
contradicts \eqref{defp0}.\\

{\bf Case 3 No type 2 corridors and for all $x \in B$, and for all $n \geq 2$, $\Pi_{\mathcal D} F^n(x)$ is not a boundary point of a corridor.}
The future trajectories
of points in $B$, after the first collision, are now allowed to contain
corner points, but they cannot return to $B$.

An important observation is that the orbit of $B$ cannot contain type 1 boundary points.
Indeed, the preimage of any type 1 boundary point is itself.

Recall that now the billiard (both flow and map) can be multivalued
at $F(r_{H,l,2}, -\varphi_{H,l,2})$.
We say that a multivalued map $T$ is multicontinuous at $x$ 
if for every $\varepsilon$ there is some $\delta$ so that
for any $y$ with $dist(x,y) < \delta$ there is a mapping
$g_{x,y}$ from $T(y)$ to $T(x)$ (recall that these are sets now!)
so that for any $z \in T(y)$, $dist(z, g_{x,y}(z)) < \varepsilon$. 
By our definition of the billiard at corner points and by the assumption
of Case 3, $\Phi^t$ is multicontinuous at $F(r_{H,l,2}, -\varphi_{H,l,2})$
for any $t \geq 0$.
Indeed, the values of the flow were defined as the possible
limit points of nearby regular trajectories. 

Now observe that \eqref{defbarN2} is still valid. 
Furthermore, \eqref{defeps} also remains true 
but requires a new proof. 
Since the future orbit of $B$ can contain corner points, a priori
it may be possible that 
$F^k(x)\cap \partial \mathcal E_{\bar N} \neq \emptyset$. However,
the intersection is finite and cannot contain points in $B$
by the assumption of Case 3. Thus for any point $y  
\in F^k(x)\cap \partial \mathcal E_{\bar N}$ there is some $h$ and
$N(y)$ so that $y \in \partial E^+(h,N(y))$. Since there
are finitely many points $y$, we still can guarantee \eqref{defeps}
by choosing $\bar N > \max_y N(y)$.

Now we can conclude the proof of the lemma as in Case 2 with the 
only difference that we use {\it one} element of each of 
the sets $F^{\hat k}(r_{H,l,2}, -\varphi_{H,l,2})$ 
and $F^{\bar k} (p_0, \psi_0)$
to derive the contradiction.\\

{\bf Case 4 No type 2 corridors}
The difference from Case 3 is that now points in $B$ are allowed 
to return to $B$ under branches of iterates of $F$. 

First, observe that \eqref{defbarN2} is still valid.
Indeed, if $F^{k_1}(x_1) = x_2$ for some $x_1, x_2 \in B$,
then by definition $x_2 \notin \mathcal E_{\Cr{N:N_0}}$ as $x_2$
does not have long free flight.
In Case 4, \eqref{defeps} is not true as $F^k(B)$ may intersect 
with $B$, and for any positive integer $N$,
$B \subset \partial \mathcal E_N$.
However, we have the weaker statement
$$
\dist(\cup_{k=1}^L F^k(B) \setminus B, \mathcal E_{\bar N}) < \varepsilon
$$
which is proved exactly as in Case 3.

Consider one element of the set 
$F^{\bar k} (p_0, \psi_0) \cap \mathcal E_{\Cr{N:N(K)type3}}$. Denote this 
point by $(p_{\bar k}, \psi_{\bar k})$. Once
$(p_{\bar k}, \psi_{\bar k})$ is fixed, we can find a
unique
sequences of points $(p_{k}, \psi_{k})$ so that $(p_{k}, \psi_{k})
\in F(p_{k-1}, \psi_{k-1})$ for $k=1,...,\bar k$. 
Likewise, we can find a unique sequence of points 
$y_1,...,y_{k_1}$ so that $y_1 = (r_{H,l,2}, -\varphi_{H,l,2})$,
$y_k \in F^{s_k}(y_{k-1})$ and $y_k$ is $\bar \varepsilon$ close
to $(p_{k}, \psi_{k})$. This is the same construction as in Case 3;
the only difference is that we can only go up to iterate 
$$k_1 = \min\{ k: y_k \in B\} \wedge \hat k.$$
Indeed, the construction of Case 3 works up to the first time
$y_k \in B$.
If $\hat k < \min\{ k: y_k \in B\}$, then the proof is completed
as in Case 3. 
Assume now that $\hat k = \min\{ k: y_k \in B\}$. Recalling 
the definition of $W$ from Case 1, we see that
$y_{k_1}$ and $(p_{k_1}, \psi_{k_1})$ are on a short unstable
curve. 
That is, the tangent of the line segment
$(y_{k_1},(p_{k_1}, \psi_{k_1}))$ satisfies $d \varphi/dr \geq 0$. But 
note that for any point $ z \in \mathcal E(y_{k_1},\Cr{N:N(K)type3})$ the tangent
of the line segment $(y_{k_1},z)$ satisfies $d \varphi/dr < 0$ (see one case on
the left panel of Figure \ref{fig2}. 
In the other case, i.e. when $\Pi_{\mathcal D}y_{k_1}$
is the right endpoint of $\Gamma_{ij}$, the region $\mathcal E(y_{k_1},\Cr{N:N(K)type3})$ is to the northwest from $y_{k_1}$.) 
This means that $(p_{k_1}, \psi_{k_1})$ cannot be in 
$\mathcal E_{\Cr{N:N(K)type3}}$ which is a contradiction.
Now assume that $\hat k > k_1$. 
Assume that $y_k = (r_{H,r,1}, \varphi_{H,r,1})$ 
(the other three cases are similar). Then define 
$y_{k_1+1} = (r_{H,r,2}, - \varphi_{H,r,2})$ and 
$(p_{k_1 + 1}, \psi_{k_1 + 1}) = 
F(p_{k_1}, \psi_{k_1})$ (which exists uniquely as $\bar \varepsilon$
is small). Furthermore, the line segment $W_1$ joining
$y_{k_1+1} $ and $(p_{k_1 + 1}, \psi_{k_1 + 1})$ has a 
tangent that satisfies $d \varphi / dr \geq 0$. Then we can repeat  
the previous construction with $(p_0, \psi_0)$ replaced by  $(p_{k_1 + 1}, \psi_{k_1 + 1})$
and $y_1$ replaced by $y_{k_1+1}$. Let 
$$k_2 = \min \{ k > k_1 : y_k \in B \} \wedge \hat k.$$
If $\hat k \leq  \min \{ k > k_1 : y_k \in B \}$, then the proof is completed as before.
If $\hat k > \min \{ k > k_1 : y_k \in B \}$, then we can define $(p_{k_2+1}, \psi_{k_2+1})$, $y_{k_2 + 1}$
as before. Following this procedure inductively, we consider as many $k_\ell$'s as needed. For some
$\ell < K$, we will have $\hat k < k_{\ell}$ whence we can finish the proof.\\

{\bf Case 5 Some type 2 corridors}

When type 2 corridors are allowed, the previous proof can be repeated with minor 
changes, which we list next.
First, we replace $B$ by $A'$, where $A'$ is defined in \eqref{def:A'}.
Let $(p_0, \psi_0) \in \mathcal E(x_h, \Cr{N:N(K)type3})$ as before. If $x_h \in B$, we proceed as before.
Assume now that $x_h  = (r_h, \varphi_h) \in A' \setminus B$.
Then the boundary points of the corresponding corridor $H$ are
$$
A_H = \{ x_h  = (r_h, \varphi_h), x_{h'}  = (r_{h'}, \varphi_{h'}), (r_{h''}, - \pi/2), (r_{h''}, \pi/2) \}.
$$
Here, $r_h$ and $r_{h'}$ correspond to the corner point on one side of the corridor,
and the points $(r_{h''}, \pm \pi/2)$ correspond 
to the regular boundary point of the corridor.
As in Lemma \ref{lem:flightgrowth}, we find that
either $F^2(p_0, \psi_0)$ or $F^3(p_0, \psi_0)$ is in a small neighborhood of $(r_{h'}, -\varphi_{h'})$.
Indeed, the particle starting from $(p_0, \psi_0)$ experiences a long free flight, after which it collides 
once or twice in a small neighborhood of the regular boundary point of the corridor, and then has another long free flight
terminating in a small neighborhood of the boundary corner point.
Replacing $(p_1, \psi_1)$ with either $F^2(p_0, \psi_0)$ or $F^3(p_0, \psi_0)$ (whichever is close to $(r_{h'}, -\varphi_{h'})$),
and making similar adjustment at all times $k_1, k_2,...,k_{\ell}$ as introduced in Case 4, we can repeat the proof of Case 4.


\section{Proof of Theorem \ref{thm3}}
\label{sec:thm3}

Theorem \ref{thm3} is quite intuitive. Indeed, conditions (A1) and (A2)
prescribe degeneracies in the geometry which can be easily 
destroyed by a small perturbation (e.g., the genericity of (A1) was stated
in \cite{SzV07} without a proof). It is not difficult to turn this intuition
into a rigorous proof, but we decided to include such a proof for completeness.

Let us fix some combinatorial data $(d, J_1, ..., J_d)$.
Since $\bm D$ is a disjoint union of the open sets $\bm D_{d, J_1, ..., J_d}$, it is enough to
prove the theorem for $\bm D_{d, J_1, ..., J_d}$. To simplify the notation, we will drop
the subscript and only write 
$\bm D$ instead of $\bm D_{d, J_1, ..., J_d}$ in the sequel.

We say that an {\it incipient corridor} $H$ 
is a direction $v= v_H \in [0, \pi)$ 
and a 
connected subset $Q_H$ of $\mathcal D$ with empty interior
satisfying \eqref{eq:QH}.
The difference between corridors and incipient corridors is that in case of the
latter one, $Q_H$ has empty interior.
That is, the configurational component of an infinite orbit
that only experiences grazing collisions, but does so on both sides of the flight, constitutes an incipient corridor.

Now define the set $\bm D_0 \subset \bm D$ as the set of billiard tables $\mathcal D$
that satisfy (A1) and (A2) and do not have incipient corridors. We are going
to prove that $\bm D_0$ is open and dense. This clearly implies the theorem.\\

{\bf Step 1: $\bm D_0$ is open} 

Given $\mathcal D \in \bm D_0$,
we need to find $\varepsilon >0$ so that $U$, the $\varepsilon$ neighborhood of 
$\mathcal D$, is contained in $\bm D_0$. 
For $\mathcal D \in \bm D_0$, let $\kappa_+$ denote the maximal curvature at regular
points. Then $\mathbb T^2 \setminus \mathcal D$ contains a disc of radius 
$\kappa_+^{-1}$. By choosing $\varepsilon < \kappa_+^{-1}/2$, we ensure that for
all $\mathcal D' \subset U$ there is a disc of radius $\kappa_+^{-1}/2$ inside
$\mathbb T^2 \setminus \mathcal D'$. 

Next we claim that there is a finite set 
$\mathcal V \subset \mathcal S^1$ so that for any 
$\mathcal D' \subset U$ and for any corridor $H$ on $\mathcal D'$, the direction of $H$ satisfies $v_H
\in \mathcal V$.
To prove the claim, first observe that for any direction $v_H$, $\tan v_H$ is rational. Indeed,
if it was not rational, then the set $\{q+tv_H\}_{t \in \mathbb R}$ would be
dense in $\mathbb T^2$. Now assume that $v_H \in [0, \pi/4]$ (the other cases
are similar). Let us write $\tan v_H = P/Q$ where $0 < P < Q$ are coprime
integers. Then necessarily $Q < 3 \kappa_+$ because otherwise 
the set $\mathbb T^2 \setminus \{q+tv_H\}_{t \in \mathbb R}$ would
not contain a 
ball of radius $\kappa_+^{-1}/2$. The claim follows. 

Let $\mathcal V_0 \subset \mathcal V$ be the set of directions in which there is a
corridor on $\mathcal D$ and 
let $v \in \mathcal V \setminus \mathcal V_0$.
Now we claim that there is some $\delta_v >0$ so that for any $q \in \mathbb T^2$,
the line $q + t v$, $t \in \mathbb R$ intersects with the complement
of the $\delta_v$ neighborhood of $\mathcal D$. Indeed, this 
follows from the assumption that $\mathcal D$
does not have incipient corridors and from compactness. 
Likewise, for any $v \in \mathcal V_0$ there is some 
$\delta_v >0$ so that
for any 
$q \notin \cup_{H: v_H = v}\bm B_{\delta_v}(Q_H)$
the line $q + t v$, $t \in \mathbb R$ intersects with the complement
of the $\delta_v$ neighborhood of $\mathcal D$. 
Here, $\bm B_\rho (Q)$ means the $\rho$ neighborhood of $Q \subset \mathbb T^2$.
Further reducing $\varepsilon$ if necessary,
we can assume that $\varepsilon < \delta_v$ for all 
$v \in \mathcal V$. Consequently, for all $\mathcal D' \in
U$, there is an injection from the set of corridors of $\mathcal D'$
to the set of corridors of $\mathcal D$ preserving the angle of the corridors. 
Indeed, by the choise of $\varepsilon$,
no new corridor can open up if we perturb $\mathcal D$ with an $\varepsilon$
small $\mathcal C^3$ (in fact $\mathcal C^0$) perturbation. It may be 
possible at this point that some corridors close during the perturbation,
which we rule out next.

Now since $\mathcal D$ satisfies (A1), the following is true.
For any corridor $H$ on $\mathcal D$, we can find some $\varepsilon_H >0$
so that for any $q$ in the $\varepsilon_H$ neighborhood
of $Q_H$, 
$$
\{ q+ t v_H: t \in \mathbb R \} \cap (\mathbb T^2 \setminus \mathcal D)
\subset \bm B_{\varepsilon_H} (B_H).
$$
Here, $B_H = \partial Q_H \cap \partial \mathcal D$
has two elements by (A1). Further
reducing $\varepsilon$ as necessary, we can assume $\varepsilon < \varepsilon_H/2$
for all corridors $H$ on $\mathcal D$. Now by construction
for any corridor $H$ on $\mathcal D$
and for any $\mathcal D' \in U$, we can find a correspoding corridor $H'$ on 
$\mathcal D'$ so that $v_H = v_{H'}$ and the symmetric difference of $Q_H$ and
$Q_{H'}$ is contained in the $\varepsilon$ neighborhood of the boundary of $Q_H$. In particular, the injection constructed in the previous paragraph is now a bijection. Furthermore, $B_{H'} = \partial Q_{H'} \cap \partial \mathcal D'$ has two 
elements. We conclude that $\mathcal D'$ satisfies (A1) and has no incipient corridors.

Finally, since $\mathcal D$ satisfies (A2), there is some angle $\alpha >0$
so that for any type 2 or 3 corridor $H$ and for any boundary
corner point $q_H \in B_H$,
the angle between $v_H$ and any one-sided tangent to $\partial \mathcal D$
at $q_H$ is bigger than $\alpha$. Further reducing $\varepsilon$ if necessary, we can assume
$\varepsilon  < \alpha/2$. This guarantees that all $\mathcal D' \in U$ satisfy
(A2). It follows that $\bm D_0$ is open.\\

{\bf Step 2: $\bm D_0$ is dense} 

We will need the following simple lemma.

\begin{lemma} (Local enlargement)
Let $\mathcal D$ be an admissible billiard table, $ q \in \partial \mathcal D$ and $\varepsilon >0$. Then there exists
another admissible billiard table $\tilde{\mathcal D}$
so that 
\begin{itemize}
\item $d(\mathcal D, \tilde{ \mathcal D}) < \varepsilon$
\item $\mathcal D$ and $ \tilde{ \mathcal D}$ coincide on the complement of the $\varepsilon$ neighborhood of $q$
\item $\tilde{ \mathcal D } \subset \mathcal D$  with $q$ being in the interior of $\mathbb T^2 \setminus \tilde{ \mathcal D}$
\end{itemize}
\end{lemma}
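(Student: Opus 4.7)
The plan is to construct $\tilde{\mathcal{D}}$ by a local perturbation of $\partial \mathcal{D}$ near $q$, using bump functions to push the boundary inward (into the scatterer) by a small amount supported in the $\varepsilon$-neighborhood of $q$. Since the perturbation is supported in a small ball around $q$ and the scatterer boundary is piecewise $C^3$, the construction can be done in a local Euclidean chart, so we may pretend to work in $\mathbb{R}^2$ rather than $\mathbb T^2$.

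First I would handle the case that $q$ is a regular point of $\partial\mathcal{D}$. In a neighborhood of $q$ the boundary is a $C^3$ curve $\gamma(s)$ parametrized by arc length with $q=\gamma(0)$; let $\nu(s)$ denote the unit normal pointing into the scatterer. Fix a smooth non-negative bump $\phi:\mathbb{R}\to\mathbb{R}$ with $\phi(0)>0$ and $\mathrm{supp}\,\phi\subset(-\delta,\delta)$ for some $\delta\in(0,\varepsilon/2)$, and define
\[
\tilde\gamma(s)=\gamma(s)+\eta\,\phi(s)\nu(s)
\]
for $\eta>0$ small. For $\eta$ sufficiently small (depending on $\phi$ and on the uniform upper and lower bounds on the curvature of $\gamma$), $\tilde\gamma$ is a $C^3$ curve with curvature still within the admissible curvature bounds, lies strictly inside the original scatterer on $\mathrm{supp}\,\phi$, and agrees with $\gamma$ outside. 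Gluing $\tilde\gamma$ in place of the local piece of $\partial\mathcal{D}$ yields an admissible $\tilde{\mathcal{D}}\subsetneq\mathcal{D}$ with $q\in\mathrm{int}(\mathbb{T}^2\setminus\tilde{\mathcal{D}})$ and $d(\mathcal{D},\tilde{\mathcal{D}})<\varepsilon$.

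Next I would treat the case that $q$ is a corner point, where two smooth pieces $\Gamma_1,\Gamma_2$ meet at $q$ with angle $\gamma\in(0,2\pi)\setminus\{0\}$. Apply the same inward bump construction on each side to obtain perturbed curves $\tilde\Gamma_1,\tilde\Gamma_2$ which agree with $\Gamma_i$ outside the $\varepsilon$-neighborhood of $q$. In general these will no longer meet at $q$; I resolve this by extending each $\tilde\Gamma_i$ slightly along its smooth extension until they intersect at a new corner point $\tilde q$ close to $q$ (if the corner is convex, $\tilde q$ lies inside the original scatterer; if concave, one instead cuts off the corner by deleting from the scatterer a small piece bounded by $\tilde\Gamma_1,\tilde\Gamma_2$ and, if needed, a short connecting $C^3$ arc). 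Provided $\eta$ is small, the new angle at $\tilde q$ is close to the original $\gamma$ and hence stays bounded away from $0$ and $2\pi$, so the resulting table remains admissible with no cusp created; the angle between successive half-tangents is still at least $\alpha_0/2$, and the curvature bounds are preserved for the same reason as in the regular case.

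The main obstacle is the corner case: one must check simultaneously that (i) after the bump perturbation, the local perturbation continues to have uniformly bounded positive curvature at all regular points, (ii) no new cusp is introduced when reconnecting $\tilde\Gamma_1$ and $\tilde\Gamma_2$, and (iii) $q$ indeed falls strictly inside $\mathbb T^2\setminus\tilde{\mathcal{D}}$. All three follow from choosing $\eta$ small relative to the lower bound on $|\sin(\gamma/2)|$ and the curvature bounds for $\mathcal{D}$, which control how the perturbation affects the angle at the new corner and the curvature of the glued pieces. The rest of the verification is a routine $C^3$ estimate showing $\|\tilde\gamma-\gamma\|_{C^3}\le C\eta\,\|\phi\|_{C^3}<\varepsilon$.
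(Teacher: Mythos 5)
Your approach---a $C^\infty$ bump-function perturbation of $\partial\mathcal D$ near $q$, reglued to the unperturbed boundary away from $q$---is the same as the paper's, and your verification that the perturbed curve remains admissible (curvature bounds, the reconnection at a new corner $\tilde q$ in the corner case) is in fact more careful than what the paper records. However, you have the \emph{direction} of the perturbation backwards, and as written the construction yields the opposite of every geometric conclusion of the lemma. You take $\nu$ to be the unit normal pointing \emph{into} the scatterer and set $\tilde\gamma=\gamma+\eta\phi\nu$, so on $\mathrm{supp}\,\phi$ the new boundary sits strictly inside the old scatterer. But then the new scatterer is a \emph{proper subset} of the old one, so $\tilde{\mathcal D}\supsetneq\mathcal D$ rather than $\tilde{\mathcal D}\subsetneq\mathcal D$, and $q$ (which was on $\partial\mathcal D$) now lies in the interior of the \emph{billiard table} $\tilde{\mathcal D}$, not in the interior of $\mathbb T^2\setminus\tilde{\mathcal D}$ as required. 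The lemma is a ``local \emph{enlargement}'' of the scatterer, precisely so that in the density step of Theorem~\ref{thm3} one can block lines of sight and thereby close incipient corridors or destroy extra corridor boundary points; that mechanism only works if the obstacle grows. The fix is trivial---push the boundary outward into $\mathcal D$, i.e.\ use $\tilde\gamma=\gamma-\eta\phi\nu$ or equivalently take $\nu$ to be the normal pointing into $\mathcal D$---but as stated the proof does not establish the third bullet, and the corner-point discussion (where you extend the two pushed-in arcs until they meet inside the scatterer) inherits the same inversion and should be rewritten for arcs pushed outward, reconnected near $q$ by a short $C^3$ cap if the extended arcs do not intersect close to $q$.
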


\begin{proof}
Assume that $q \in \Gamma_{i,j}$ is a regular point. Then we can represent 
$\Gamma_{i,j}$ 
in a small neighborhood of $q$ in local coordinates as a graph of a concave function 
$f:[-1,1] \rightarrow \mathbb R^2$ with $f(0) = 0$. Fix a $C^\infty$ function $\phi: \mathbb R \rightarrow \mathbb R$ 
so that $\phi(0) = 1$ and $\phi$ is identically zero outside of $(-1/2, 1/2)$. Let the curvature of $\Gamma$ at $q$
be $\kappa$ and $\varepsilon' = \min\{ \varepsilon, \kappa\} /(10 \| \phi\|_{\mathcal C^3})$. 
Now define $\tilde{ \mathcal D}$ to be the same as $\mathcal D$ except that the image of $f$ is replaced by the image of
$\tilde f = (1 + \phi) f$. By construction, $\tilde{ \mathcal D}$ is an admissible table satisfying the requirements.

The case of corner points is similar, we just need to perturb both curves meeting at the corner point.
\end{proof}

To prove that $\bm D_0$ is dense, fix an arbitrary $\mathcal D \in \bm D$ and 
$\varepsilon >0$. We need to find some $\hat{ \mathcal D} \in \bm D_0$ with
$d(\mathcal D, \hat{\mathcal D}) < \varepsilon$.
In the remaining part of the proof, the term corridor can stand for either non-incipient or
incipient corridor.

Let us denote by $U$ the $\varepsilon$ neighborhood of $\mathcal D$.
Reducing $\varepsilon$ if necessary, we can assume as in Step 1 that there is
a finite set
$\mathcal V$ so that for any $\mathcal D' \in U$ and for any corridor $H$ 
on $\mathcal D'$, $v_H \in \mathcal V$. Furthermore, for any given $v \in V$, there may only
be a bounded number of corridors with direction $v$. Let us fix some ordering of the 
corridors. E.g. fix arbitrary ordering on $\mathcal V$ and define $H_1 < H_2$ 
if $v_{H_1} < v_{H_2}$.
For corridors $H_1$, 
$H_2$ with $v_{H_1} = v_{H_2}$, project $Q_{H_i}$ to the direction perpendicular to $v_{H_1}$ (when $\mathbb T^2$ is identified with the unit square). If the projections
are denoted by $\pi Q_{H_1}$, $\pi Q_{H_2}$, then define $H_1 < H_2$ if
the origin is closer to $\pi Q_{H_1}$ than to $\pi Q_{H_2}$. 

We are going to consider billiard tables $\mathcal D' \in U$ with
${\mathcal D' } \subset \mathcal D$. This guarantees 
that no new corridors open up by the perturbation, that is there is an injection
$\iota_{\mathcal D'}$
from the set of corridors on ${\mathcal D'}$ to the set of corridors on 
${\mathcal D}$ that preserves the angle and the ordering. 
Note however that this time $\iota$ may not be a bijection
as we want to eliminate incipient corridors.
Let $H_1< H_2< ... <H_k$ be the 
ordered list of corridors of $\mathcal D$. 
Let us say that a corridor on a billiard table $\mathcal D'$ is good if it is non-incipient
and does not violate (A1) and (A2). 

We are going to define 
$\mathcal D = \mathcal D'_0, \mathcal D'_1,..., \mathcal D'_k = \hat{\mathcal D}$
in a way that for every $i=1,...,k$,
\begin{itemize}
\item $d(\mathcal D'_i, \mathcal D'_{i+1}) < \varepsilon /2k$
\item the corridors in $\iota^{-1}_{\mathcal D'_{i}}(\{ H_1, ..., H_i \})$
are all good.
\end{itemize}
If these items can be guaranteed, then it follows that $\hat{\mathcal D} \in \bm D_0$
and $d(\mathcal D, \hat{\mathcal D}) < \varepsilon$,
which completes the proof.
We prove the above items by induction. Assume they hold for $i$. 
If $\iota^{-1}_{\mathcal D'_{i}}(\{ H_{i+1} \}) = \emptyset$, then we define
$\mathcal D'_{i+1} = \mathcal D'_{i}$. Next assume that there is a corridor
$H'$ on $ \mathcal D'_{i}$ with $\iota_{\mathcal D'_{i}}(H') = H_{i+1}$.
If $H'$ is good, then we define
$\mathcal D'_{i+1} = \mathcal D'_{i}$. Let us now assume that 
$H'$ is either incipient or violates (A1) or (A2). In all cases, we can apply the 
local enlargement lemma with $\mathcal D, \varepsilon$ replaced by $\mathcal D_i,
\delta_{i+1} < \varepsilon/2k$ at some point $q_{i+1}$
to produce another billiard table $\mathcal D'_{i+1}$ with 
either $\iota^{-1}_{\mathcal D'_{i+1}}(\{ H_{i+1} \}) = \emptyset$ (in case $H'$ was incipient) or $\iota^{-1}_{\mathcal D'_{i+1}}(H_{i+1})$ is a good corridor.
Indeed, if $H'$ is incipient, then we apply the local enlargement lemma at 
a point $q_{i+1} \in H' \cap \partial \mathcal D'_{i}$. If $H'$ violates (A1), then it has several boundary points
on at least one of its sides. Now we apply the local enlargement lemma at one of 
these boundary points.
 Finally, if $H'$ violates (A2), then we apply the 
local enlargement lemma at the given boundary corner point. Clearly, the perturbation
can be made in a way that the direction of the half-tangents is modified and so
$\iota^{-1}_{\mathcal D'_{i+1}}(H_{i+1})$ will not violate (A2). 

Finally, we claim that by choosing $\delta_{i+1}$ small, we can guarantee that
the corridors in 
$\iota^{-1}_{\mathcal D'_{i+1}}(\{ H_1,...,H_{i} \})$ are all good, too. 
Note that this is not entirely obvious as a corner point
can be on the boundary of multiple corridors (with different directions)
and so the perturbation at iteration $i+1$ may change 
$\iota^{-1}_{\mathcal D'_{i}}(H_j)$ with $j\leq i$.
However, Step 1 ensures that there is some $\delta_{i+1} \in (0, \varepsilon / 2k)$ so that $\delta_{i+1}$
small $\mathcal C^3$ perturbations preserve the goodness of corridors. This
completes the poof of the induction. It follows that $\bm D_0$ is dense.


\section*{Acknowledgement}
We thank Jacopo de Simoi and Imre P\'eter T\'oth for useful discussions.
This work started when PN was a Brin fellow at University of Maryland,
College Park.
MB was partially supported by NSF DMS 1800811 and
PN was partially supported by NSF DMS 1800811 and NSF DMS 1952876.

\end{document}